\DeclareMathOperator\supp{supp}
\newcommand\R{\mathbb{R}}
\newcommand\C{\mathbb{C}}
\newcommand\N{\mathbb{N}}
\newcommand{\bigchi}{\scalebox{1.2}{$\chi$}}
\newcommand{\eps}[1]{#1_{\varepsilon}}
\newcommand{\D}{\mathrm{d}}
\newcommand{\integral}{\int\limits_{\Omega}}
\newtheorem{theorem}{Theorem}
\newtheorem{assumption}{Assumption}
\newtheorem{lemma}[theorem]{Lemma}
\newtheorem{prop}[theorem]{Proposition}
\newtheorem{remark}[theorem]{Remark}
\theoremstyle{remark}
\begin{document}

\title{Local asymptotic stability of a system of integro-differential equations describing clonal evolution of a self-renewing cell population under mutation}


\author[*]{Jan-Erik Busse}
\author[**]{S\'{i}lvia Cuadrado}
\author[*]{Anna Marciniak-Czochra}

\affil[*]{Institute of Applied Mathematics
Interdisciplinary Center for Scientific Computing (IWR) and
BIOQUANT Center, Heidelberg}
\affil[**]{Departament de Matem\`atiques, Universitat Aut\`onoma de Barcelona}
\affil[ ]{\small \texttt{jan-erik.busse@bioquant.uni-heidelberg.de, silvia@mat.uab.cat, Anna.marciniak@iwr.uni-heidelberg.de}}
\date{}

\maketitle

\begin{abstract}\noindent
In this paper we consider a system of non-linear integro-differential equations (IDEs) describing evolution of a clonally heterogeneous  population of malignant white blood cells (leukemic cells) undergoing mutation and clonal selection. We prove existence and uniqueness of non-trivial steady states and study their asymptotic stability. The results are compared to those of the system without mutation.  Existence of equilibria is proved by formulating the steady state problem as an eigenvalue problem and applying a version of the Krein-Rutmann theorem for Banach lattices. The stability at equilibrium is analysed using linearisation and the Weinstein-Aronszajn determinant which allows to conclude local asymptotic stability.

\end{abstract}

\section{Introduction}\label{chap:mutdef}
This paper is devoted to the analysis of a system of integro-differential equations (IDEs) describing clonal evolution of a self-renewing cell population. The population is heterogeneous with respect to the self-renewal ability of dividing cells, a  property that may change due to cancerous mutations. The model is applied to describe dynamics of acute leukemias, an important type of malignant proliferative disorders of the blood forming system. \\

Acute leukemias show a considerable inter-individual genetic heterogeneity and a complex genetic relationship among different clones, i.e. subpopulations consisting of genetically identical cells  \cite{Ding,Lutz}. 
Similarly to the healthy hematopoietic system, the leukemic cell bulk is maintained by cells with stem-like properties that can divide and give rise to progeny cells which either adopt the same cell route as the parent cell (undergo self-renewal) or differentiate to a more specialised cell type \cite{Bonnet,Hope,Lutz}. There exists  theoretical~\cite{StiehlMMNP,StiehlBaranHoMarciniakCR} and experimental~\cite{Jung,Metzeler,Wang} evidence suggesting that the self-renewal ability of leukemic stem cells has a significant impact on disease dynamics and patient prognosis~\cite{SMC-Review2017,SMC-Review2019}. Increased self-renewal
confers a competitive advantage on cancer cell clones by leading to aggressive expansion of both stem and non-stem cancer cells and can be responsible for the clonal selection observed in experimental and clinical data \cite{Ding,VanDelft}. The latter has been investigated using mathematical models of evolution of an arbitrary number of leukaemic clones coupled to a healthy cell lineage \cite{StiehlBaranHoMarciniak,busse2016}.  A mathematical proof of  clonal selection has been shown in Ref. \cite{busse2016} exploiting the analytical tractability of the model with a continuum of heterogeneous clones differing with respect to the stem cell self-renewal ability.  A similar result has been  recently obtained in Ref. \cite{LMCS} for an extended model with two-parameter heterogeneity with respect to cancer stem cell self-renewal fraction and proliferation rate. It was shown that while increased proliferation rates may lead to a rapid growth of respective clones, the long-term selection process is governed by increased self-renewal of the most primitive subpopulation of leukemic cells  \cite{StiehlBaranHoMarciniak,LMCS}.  Mathematical analysis of the model provided an understanding of the link between the observed selection phenomenon and the nonlocal mode of growth control in the model, resulting from description of different plausible feedback mechanisms. Moreover, comparison of patient data and numerical simulations of the model allowing emergence of new clones suggested that self-renewal of leukemic clones increases with the emergence of clonal heterogeneity \cite{StiehlBD}. An open question is whether a mutation process with phenotypic heterogeneity in the course of disease may change the observed selection effect. To address this question, we propose an extension of the basic clonal selection model from Ref. \cite{busse2016} to account for the process of mutations. \\

The model from Ref. \cite{busse2016} takes the form
\begin{align}\label{eq:sysold}
\left\{\begin{array}{llll}
		\frac{\partial}{\partial t}u(t,x) &= \left(\frac{2a(x)}						{1+k\rho_2(t)} - 1\right)pu(t,x),\\
		\frac{\partial}{\partial t}v(t,x) &= 2\left( 1- \frac{a(x)}					{1+k\rho_2(t)}\right)pu(t,x) - dv(t,x),\\
		u(0,x) &= u_0(x),\\
		v(0,x) &= v_0(x), 
		\end{array}\right. 
\end{align}

where $x\in\Omega\subset\R,  a\in C(\overline{\Omega}), p,k,d\in\R_+$ and $\rho_2(t) = \int_{\Omega} v(t,x)\,\D x$.\\

The model describes evolution of one healthy cell lineage and an arbitrary number of leukemic clones, where the structural variable  $x\in \Omega$ represents a continuum of possible cell clones (e.g. characterised by different gene expression levels) differing with respect to the self-renewal ability of dividing cells. We follow the convention that  $x=x_0\in \Omega$ corresponds to healthy cells whereas different leukemic clones are characterised by different values of $x \in \Omega\setminus\{x_0\}$. The description of cell differentiation within each cell line is given by a two-compartment version of the multi-compartment system established in \cite{Marciniak2009}, mathematically studied in \cite{StiehlMCM,Nakata,StiehlMMNP,Knauer} and applied to patient data in \cite{StiehlBMT,StiehlBaranHoMarciniakCR}. The model focuses on self-renewal of primitive cells $u(t,x)$ and their differentiation to mature healthy cells $v(t,x_0)$ or leukemic blasts $v(t,x)$, $x \in \Omega\setminus\{x_0\}$, which do not divide. The two-compartment architecture is based on a simplified description of the multi-stages differentiation process.  Dividing cells $u$  give rise to two progeny cells.  A progeny cell is either more specialized than the mother cell, i.e., it is differentiated, or it is a copy of the mother cell (the case of self-renewal). The proliferation rate is denoted by the constant $p$.  The function $a(x)$ describes the fraction (probability) of self-renewal of cells of clone $x$, where dependence on $x$ reflects the clonal heterogeneity.  The feedback signal that promotes the self-renewal of dividing cells is modelled using a Hill function $ \frac{1}{1 + k \rho_2(t)}$, where the parameter $k>0$ is related to the degradation rate of the feedback signal~\cite{Marciniak2009,StiehlMCM,StiehlMMNP}. This formula has been derived from a simple model of cytokine dynamics using a quasi-stationary approximation~\cite{Getto,Mikelic}, motivated by biological findings presented in \cite{Kondo,Layton,Shinjo}. Implementing other plausible regulation mechanisms led to a similar model dynamics that can reproduce the clinical observation \cite{StiehlSciRep,Wang}. 
\\

In \cite{busse2016} it has been shown that the solution $(u,v)$ of system \eqref{eq:sysold} converges weakly$^*$ in the space of positive Radon measures $\mathcal{M}^+(\Omega)$  to a measure with support contained in the set of maximal values of the self-renewal fraction function $a$. In particular, if the set of maximal values of $a$ consists solely of discrete points, the solution $(u,v)$ converges weakly$^*$ to a sum of Dirac measures. In \cite{LMCS}, it has been shown that a similar result holds for a model with multiple compartments under an additional assumption preventing Hopf bifurcation that may occur in the model with at least three-stages maturation structure \cite{Knauer}.\\

The purpose of this paper is to extend the clonal selection model \eqref{eq:sysold} to include a mutation process causing phenotypic heterogeneity with respect to $x$. There are essentially two ways to model mutations in a continuous setting.
First, it can be done by adding a diffusive term,  which may be introduced using a Laplacian,  as in Ref. \cite{kimura, burger,lorz,mirrahimi,diplom,ChisholmLorenziClairambault,ChisholmLorenzietal,Almeida}. Such models are based on the assumption that mutations can occur at all times within a cell's life cycle and are not limited to division, as it is the case in epigenetic modifications. Consequently, such mutations do not change the overall number of individuals and only affect their distribution with respect to the structure variable.  From a mathematical point of view the diffusive ansatz provides good properties of the obtained solution and  allows using the  library of methods for semi-linear parabolic equations. Nevertheless, as the references above indicate, there exists a difficulty with characterisation of the long-term behaviour. 
Alternatively, mutations can be modelled with an integral operator, see for instance \cite{BB,calsina2004,calsina05,CCDR,LLR,GLGL,lorz2013}. This approach  includes mutations that occur during proliferation, which seems biologically realistic in case of genetic mutations \cite[Chapter 10]{graw2015}, \cite[Chapter 9]{alberts2013}. In contrast to the diffusive ansatz, the integral kernel allows  to characterise the mutation process and, for example, to model jumps in the traits. Mathematical advantage of the integral operators is related to their compactness, while  a disadvantage is the non-local structure which makes the analysis more complicated. \\

Selection processes under mutation have been studied using both classes of mutation models, however using different methods to show convergence of the solutions, in the limit of small or rare mutations, to weighted Dirac measures. In case of a reaction-diffusion equation (RDE), the most common ansatz is to transform the RDE into a Hamilton-Jacobi equation, for which the viscosity solution provides the desired convergence result \cite{mirrahimi, diplom}. For scalar equations,  this transformation can be performed using the WKB approximation method \cite{diekmann2005}. In case of a system of equations, the latter ansatz strongly depends on the structure of the model, since it is necessary to transform a system into a scalar Hamilton-Jacobi equation. In contrast, if mutation is modelled by an integral term, appropriate mathematical tools are given by theory of positive semigroups and the infinite dimensional version of the Perron-Frobenius theorem.\\ 

In this paper, we focus on the effect of rare mutations taking place during proliferation \cite{alberts2013,graw2015}, and propose a model based on integro-differential equations where we assume that, during proliferation, a mutation occurs with probability $\varepsilon$.  We prove, under suitable hypotheses, existence of locally asymptotically stable steady states of the model, which converge, for $\varepsilon \rightarrow 0$, to a weighted Dirac measure located at the point of maximum fitness of the corresponding pure selection model \eqref{eq:sysold}. The mathematical tools applied to analysis of the selection-mutation model are based on the ones used in \cite{silvia1,silvia2} and extended here to a system of two phenotype-structured populations in a non-compact domain. Similar results can be shown for small mutations that occur independently of the proliferation process \cite{calsina05}.\\

The paper is structured as follows. In Section $2$ we introduce the model with mutations and justify its well-posedness. Section $3$ is devoted to existence and uniqueness of stationary solutions. Convergence of the steady states for the zero limit of mutation rate is studied in Section $4$. In Section $5$, we show local asymptotic stability of the stationary solution of the model. The paper is completed with two appendices providing technical proofs needed for the results in Section $5$ and Section $3$ respectively.

\section{Selection-mutation model and its assumptions}
We extend system  \eqref{eq:sysold}  to account for mutations described by an integral kernel operator and consider the following system of integro-differential equations,
\begin{eqnarray}\label{eq:mut:sys}
\left\{\begin{array}{lll}
	\frac{\partial}{\partial t} v_{\varepsilon}(t,x) &=& 2\left( 				1-\frac{a(x)}{1+k\eps{\rho}(t)} \right)pu_{\varepsilon}(t,x) 	- dv_{\varepsilon}(t,x), \\
	\frac{\partial}{\partial t} u_{\varepsilon}(t,x) &=& \left( 				\frac{2a(x)}{1+k\eps{\rho}(t)} 												-(1+\varepsilon)\right)pu_{\varepsilon}			(t,x) + 		\varepsilon p\integral\kappa(x,y)u_{\varepsilon}(t,y)\,\D y, \\
	 v_{\varepsilon}(0,x) &=& v^0(x),\\
	 	u_{\varepsilon}(0,x) &=& u^0(x),
\end{array}\right.
\end{eqnarray}
where
\[\eps{\rho}(t) = \integral \eps{v}(t,x)\,\D x.\]
As previously,  $u_{\varepsilon}(t,x)$ denotes the density of dividing cells structured with respect to the trait $x$ that represents the expression level of genes influencing self-renewal ability of the cells, while $v_{\varepsilon}(t,x)$ denotes the resulting mature cells for $x=x_0\in \Omega$ or leukemic blasts of clone $x$ for $x\in \Omega\setminus\{x_0\}$.
The growth terms describing self-renewal and differentiation of dividing cells, regulated by a nonlocal nonlinear feedback from all non-dividing cells are taken from model \eqref{eq:sysold} in Ref. \cite{busse2016}. Additionally, the model accounts for mutations that take place during proliferation at a rate $\varepsilon \in (0,1]$. If a mutation occurs, then the probability density that an individual with trait $y$ mutates into one with trait $x$ is denoted by $\kappa(x,y)$. \\

In the remainder of this paper, we make the following assumptions:
\begin{assumption}\label{ass:mut:2}
	\quad
	\begin{enumerate}
		\item $\Omega\subset\R$ is open and bounded. 
		\item $a\in C^1(\overline{\Omega})$ with $0<a(x)<1$ for all 					  $x\in\Omega$ and there exists $x_{*}\in \Omega$ such that 					  $a(x_{*})>\frac12$. Moreover, there exists a single point $\bar{x}$ where the maximal value of the self renewal function is attained, i.e. $\bar{x} = 												  \mathrm{argmax}_{x\in\overline{\Omega}} a(x), \; \bar{a}= 				  a(\bar{x})$.
		\item $u^0, v^0 \in L^1(\Omega)$ with $u^0,v^0 > 					  0$.
		\item $\kappa \in C(\overline{\Omega}\times \overline{\Omega})$ is 				  strictly positive and such that $\int_{\Omega} \kappa(x,y)\mbox{d}x=1$ .
		 
	\end{enumerate}
\end{assumption}

The proof of existence and uniqueness of a classical solution of system \eqref{eq:mut:sys} follows directly from the Banach space-valued version of Picard-Lindel\"of theorem, see \cite{pazy,zeidler}, in combination with  boundedness of total mass, which can be obtained similarly as in Ref. \cite{busse2016}.\\

{\bf Numerical observation.} Numerical simulations of the model suggest a selection effect, similar to that in the pure selection model \eqref{eq:sysold}. The difference is that, depending on the size of mutation frequency $\varepsilon$, we observe a distribution of the different cell clones around the one with the highest self-renewal fraction, see Figure \ref{fig:eigen:differenteps}. Convergence of the system to a solution concentrated around the most aggressive phenotype is a fast process and does not depend on initial data. The remainder of this paper is devoted to a rigorous proof of this observation.
\begin{figure}
\centering
\includegraphics[scale=0.3]{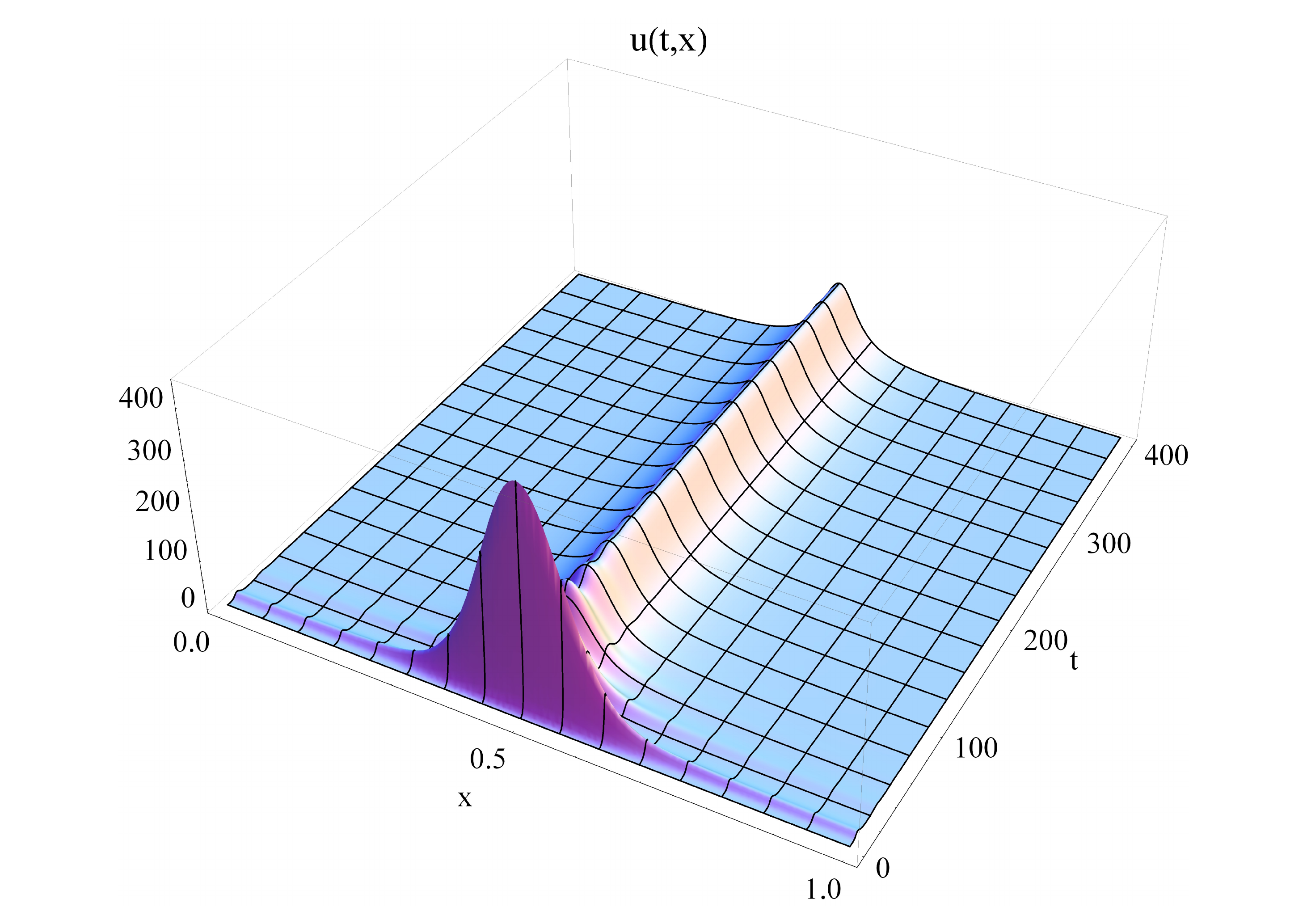}
\includegraphics[scale=0.3]{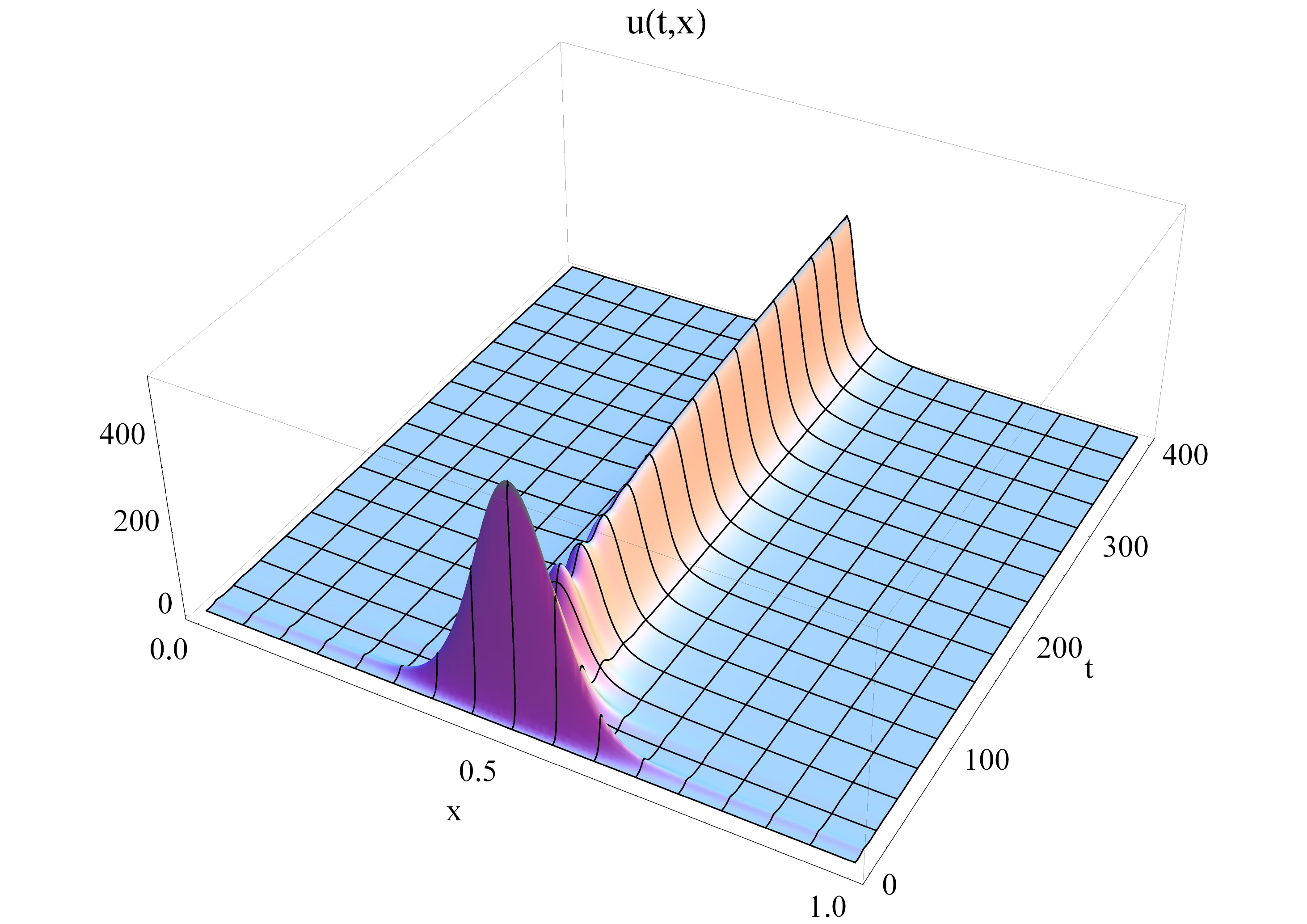}
\includegraphics[scale=0.3]{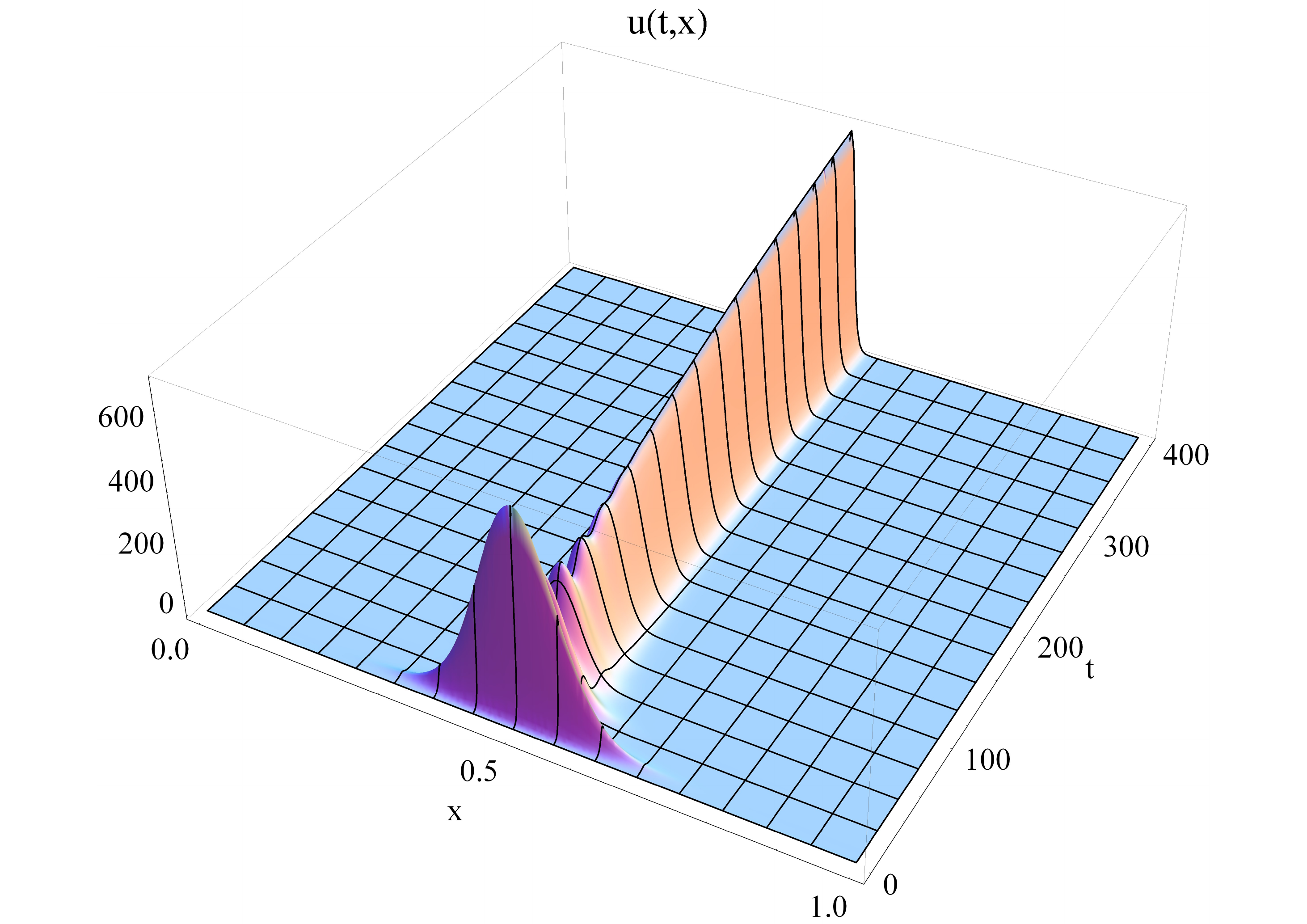}
\caption{The simulations depict the first solution $\eps{u}(t,x)$ for different values of $\varepsilon$. Going from top to bottom, the values of $\varepsilon$ are $\tfrac{3}{4}, \tfrac{1}{3}$ and $\tfrac{1}{100}$.  }
\label{fig:eigen:differenteps}
\end{figure}

\section{Existence and uniqueness of non-trivial steady states}\label{sec:mut:steady}

In general, the stationary problem for selection-mutation equations can be reduced (see \cite{calsina2004}) to a fixed point problem
for a real function whose definition depends on the existence and uniqueness of a dominant eigenvalue and a corresponding
positive eigenvector of a certain linear operator (obtained by fixing the nonlinearity in the model). To solve the problem for system \eqref{eq:mut:sys}, we follow an approach proposed in Ref. \cite{silvia1} for the stationary problem of a predator-prey model consisting of an IDE coupled with an ordinary differential equation (ODE). The structure of the ODE considered in \cite{silvia1} is a logistic type equation for which the steady state is given by a constant. Consequently, the steady state of the IDE depends on this constant that can be interpreted as a parameter. All together, the steady state problem can be reformulated as an eigenvalue problem, associated to the eigenvalue $0$, for which a positive eigenfunction is sought. The latter still depends on the parameter given by the steady state of the ODE.  To solve the coupled problem, it is necessary to choose the parameter in such a way that the eigenvalue problem and the steady state problem for the ODE are solved simultaneously. It results in solving a fixed point problem. In the remainder of this section, we adapt this approach to the cell population model \eqref{eq:mut:sys} which consists of a system of two IDE's.

\subsection{Eigenvalue problem}\label{subsec:mut:steady}

To find steady states of model \eqref{eq:mut:sys}, we consider the model obtained by integrating the first equation in \eqref{eq:mut:sys}. %
\begin{eqnarray}\label{eq:mut:sys2}
\left\{\begin{array}{lll}
	\frac{\D}{\D t} \rho_{\varepsilon}(t) &=& \integral 2\left( 				1-\frac{a(x)}{1+k\rho_{\varepsilon}(t)} \right)pu_{\varepsilon}				(t,x)\,\D x - d\rho_{\varepsilon}(t), \\
	\frac{\partial}{\partial t} u_{\varepsilon}(t,x) &=& \left( 				\frac{2a(x)}{1+k\rho_{\varepsilon}(t)} 										-(1+\varepsilon)\right)pu_{\varepsilon}	(t,x) + 				\varepsilon p\integral\kappa(x,y)u_{\varepsilon}(t,y)\,\D y, \\
	u_{\varepsilon}(0,x) &=& u^0(x),\\
	 \rho_{\varepsilon}(0) &=& \integral v^0(x)\,\D x.
\end{array}\right.
\end{eqnarray}
Since the nonlinearity depends only on the total population of mature cells (the integral of the second variable), the integrated equation becomes an ordinary differential equation for $\rho_{\varepsilon}(t)$. Consequently, the first component of a steady state $(\rho_{\varepsilon}, u_{\varepsilon}(x))$ of system \eqref{eq:mut:sys2} is a constant. Furthermore, the first component of the (corresponding) steady state of system \eqref{eq:mut:sys} can be computed  by inserting the steady state of system \eqref{eq:mut:sys2} $(\rho_{\varepsilon}, u_{\varepsilon}(x))$ into the first equilibrium equation of \eqref{eq:mut:sys} and solving it for $v_{\varepsilon}$.\\

The (nontrivial) equilibria of system \eqref{eq:mut:sys2} are given by the solutions of 
\begin{eqnarray}\label{eq:mut:steady}
\left\{\begin{array}{ll}
	0 =& \integral 2\left( 1-\frac{a(x)}{1+k\rho_{\varepsilon}} 				\right)pu_{\varepsilon}(x)\,\D x - d\rho_{\varepsilon}, \\
	0 =& \left(\frac{2a(x)}{1+k\rho_{\varepsilon}} 								-(1+ \varepsilon)\right)pu_{\varepsilon}(x) + 					\varepsilon p \integral\kappa(x,y)u_{\varepsilon}(y)\,\D y.
\end{array}\right.
\end{eqnarray}
Let us define for $\eps{\rho}>0$,

\begin{align}
B_{\varepsilon,\eps{\rho}}&: L^1(\Omega) \rightarrow L^1(\Omega), &
B_{\varepsilon,\eps{\rho}}\eps{u}(x) &:=  \left(\frac{2a(x)}{1+k\eps{\rho}} -(1+ \varepsilon)\right)p u_{\varepsilon}(x),\nonumber\\
\eps{K}&: L^1(\Omega) \rightarrow L^1(\Omega), &
\eps{K}\eps{u}(x) &:= \varepsilon p\integral \kappa(x,y)u_{\varepsilon}(y)\,\D y,\nonumber\\
C_{\varepsilon,\eps{\rho}}&: L^1(\Omega) \rightarrow L^1(\Omega), &
C_{\varepsilon,\eps{\rho}}u_{\varepsilon} &:= B_{\varepsilon,\eps{\rho}} u_{\varepsilon} + K_{\varepsilon} u_{\varepsilon}\label{operators}. 
\end{align}
If a non-trivial steady state of system \eqref{eq:mut:sys2} exists, the first equation of system \eqref{eq:mut:steady} provides a constant solution $\rho \in (0,\infty)$. The second equation of system \eqref{eq:mut:steady} can be then interpreted as an eigenvalue problem for $C_{\varepsilon,\rho}$, which depends on the parameter $\rho$. Thus, we are looking for a function $\varphi_{\varepsilon,\rho}$ and a constant $\eps{\lambda}(\rho)$ such that
\begin{eqnarray}\label{eq:mut:eigen}
C_{\varepsilon,\rho} \varphi_{\varepsilon,\rho} = \lambda_{\varepsilon}(\rho) \varphi_{\varepsilon,\rho}.
\end{eqnarray}
The first component of the steady state of system \eqref{eq:mut:sys2} is then given by the solution $\rho_{\varepsilon}$ of the equation $\lambda_{\varepsilon}(\rho)=0$. Denoting by $\varphi_{\varepsilon,\eps{\rho}}$ the corresponding normalized eigenfunction, the second component of the steady state of \eqref{eq:mut:sys2} has the form
$\eps{u}= c_{\varepsilon}\varphi_{\varepsilon,\rho}$ where $c_{\varepsilon}\in (0,\infty)$ satisfies
\[c_{\varepsilon} = \frac{d\rho_{\varepsilon}}{\integral 2\left( 1 - \frac{a(x)}{1+k\rho_{\varepsilon}}\right)p \varphi_{\varepsilon,\rho_{\varepsilon}}(x)\,\D x}.\]
Let us observe that $0$ is an eigenvalue with corresponding eigenfunction $\varphi_{\varepsilon,\rho}$ of $C_{\varepsilon,\rho}$ if and only if
\begin{alignat*}{2}
	& &  B_{\varepsilon,\rho}\varphi_{\varepsilon,\rho} + K_{\varepsilon} 					\varphi_{\varepsilon,\rho} &= 0, \\
	&\Leftrightarrow &  K_{\varepsilon} \varphi_{\varepsilon,\rho} &= 			-B_{\varepsilon,\rho}\varphi_{\varepsilon,\rho}, \\
	&\Leftrightarrow &   K_{\varepsilon}\left(- 								B_{\varepsilon,\rho}^{-1}\psi_{\varepsilon,\rho}\right) &= \psi_{\varepsilon,			\rho}
\end{alignat*}
with $\psi_{\varepsilon,\rho} := -B_{\varepsilon,\rho}\varphi_{\varepsilon,\rho}$. This means that $\psi_{\varepsilon,\rho}$ is an eigenfunction corresponding to eigenvalue $1$ of the operator $T_{\varepsilon,\rho} : L^1(\Omega) \rightarrow L^1(\Omega)$ given by
\begin{align}\label{def:mut:t}
\begin{aligned}
T_{\varepsilon,\rho}u &:=K_{\varepsilon}\circ (-B_{\varepsilon,\rho}^{-1}u)\\ 
& =\varepsilon\integral \kappa(x,y) \frac{1+k\rho}{(1+k\rho)(1+ \varepsilon)-2a(y)} u(y)\,\D y.
\end{aligned}
\end{align}

\begin{remark}
The equivalence of the eigenvalues $1$ of the operator $T_{\varepsilon,\rho} (=K_{\varepsilon}\circ (-B_{\varepsilon,\rho}^{-1}))$  and $\eps{\lambda}(\rho)$ of the operator $C_{\varepsilon,\rho}(=B_{\varepsilon,\rho}+K_{\varepsilon})$ is exploited in \cite{silvia1}, but the idea goes back to \cite[Proposition $2.1$]{burgerpert}. 
We choose to study the eigenvalue problem for the operator $T_{\varepsilon,\rho}$, because it allows a direct application of the version for Banach lattices of the well-know Krein-Rutmann theorem (\cite{daners92}).\\
Alternatively, one can directly study the eigenvalue problem for the operator $C_{\varepsilon,\rho}$ that would require more work. Existence of a strictly dominant eigenvalue of $C_{\varepsilon,\rho}$ can be obtained from an application of a result of Greiner (Corollary 1.8 in Ref. \cite{greiner}) that provides existence of an algebraically simple, strictly dominant eigenvalue
of a perturbation $B + K$ of the generator of a positive semigroup by a positive bounded irreducible operator $K$ satisfying that there exists and integer
$n$ such that $(KR(\lambda, B))^{n}$ is compact for all $\lambda$ with $\text{Re}\lambda > s(B)$ and that
$s(B + K) > s(B)$.
\end{remark}

We proceed then to investigate the eigenvalue problem for the operator $T_{\varepsilon,\rho}$.
We define
\begin{equation}
\label{def:ro}
\bar{\rho}_{\varepsilon}:=\frac{1}{k}\left(\frac{2\bar{a}}{1+\varepsilon}-1\right)
\end{equation}
where $\bar{a}=\max_{x} a(x)=a(\bar{x})$.
Notice that $T_{\varepsilon,\rho}$ is a positive operator for $\rho \in (\bar{\rho}_{\varepsilon},+\infty)$.
\begin{prop}\label{prop:mut:eigenvalue}
Let $T_{\varepsilon,\rho}$ be the linear operator defined in \eqref{def:mut:t} for $\rho\in (\bar{\rho}_{\varepsilon},+\infty)$. Its spectral radius $r(T_{\varepsilon,\rho})$ is an algebraically simple eigenvalue of $T_{\varepsilon,\rho}$ with a corresponding strictly positive eigenfunction. Moreover, $r(T_{\varepsilon,\rho})$ is the only eigenvalue of $T_{\varepsilon,\rho}$ having a positive eigenfunction. 
\end{prop}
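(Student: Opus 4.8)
The plan is to realize $T_{\varepsilon,\rho}$ as a positive, compact, irreducible operator on the Banach lattice $L^1(\Omega)$ and then invoke the Krein--Rutman theorem for Banach lattices (as in \cite{daners92}) to extract the spectral-radius eigenvalue with all of its claimed properties. Concretely, for $\rho\in(\bar\rho_\varepsilon,+\infty)$ the coefficient

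\[
m_{\varepsilon,\rho}(y):=\varepsilon\,\frac{1+k\rho}{(1+k\rho)(1+\varepsilon)-2a(y)}
\]

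is well-defined, continuous, and strictly positive on $\overline\Omega$, because the denominator equals $(1+k\rho)(1+\varepsilon)-2a(y)$ and is bounded below by $(1+k\rho)(1+\varepsilon)-2\bar a>0$ precisely by the definition \eqref{def:ro} of $\bar\rho_\varepsilon$. Hence $T_{\varepsilon,\rho}u(x)=\int_\Omega \kappa(x,y)\,m_{\varepsilon,\rho}(y)\,u(y)\,\D y$ is an integral operator with a kernel that is continuous and strictly positive on $\overline\Omega\times\overline\Omega$, using Assumption~\ref{ass:mut:2}(4).

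The verification then proceeds in four steps. First I would check that $T_{\varepsilon,\rho}$ is a bounded positive operator on $L^1(\Omega)$: positivity is immediate from the positivity of the kernel, and boundedness follows from $\|T_{\varepsilon,\rho}u\|_{L^1}\le \big(\sup_{\overline\Omega\times\overline\Omega}\kappa\big)\big(\sup_{\overline\Omega}m_{\varepsilon,\rho}\big)|\Omega|\,\|u\|_{L^1}$. Second, compactness: an integral operator from $L^1(\Omega)$ into $L^1(\Omega)$ with a continuous (hence uniformly continuous, since $\overline\Omega$ is compact) kernel is compact — one shows the image of the unit ball is bounded and equi-integrable, or alternatively that $T_{\varepsilon,\rho}$ maps $L^1$ into $C(\overline\Omega)$ with uniformly continuous images and applies Arzel\`a--Ascoli together with the bounded embedding $C(\overline\Omega)\hookrightarrow L^1(\Omega)$. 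Third, irreducibility: since the kernel is strictly positive on $\overline\Omega\times\overline\Omega$, for any $u\ge 0$ with $u\neq 0$ one has $T_{\varepsilon,\rho}u(x)>0$ for \emph{every} $x\in\Omega$, so $T_{\varepsilon,\rho}$ maps nonzero positive functions to strictly positive (a.e.) ones; this strong positivity is more than enough to rule out any nontrivial closed invariant ideal, giving irreducibility.

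With positivity, compactness, and irreducibility established, the Krein--Rutman theorem for Banach lattices applies and yields directly that the spectral radius $r(T_{\varepsilon,\rho})$ is strictly positive, is an algebraically simple eigenvalue, and has a corresponding strictly positive (quasi-interior) eigenfunction, and moreover that $r(T_{\varepsilon,\rho})$ is the \emph{only} eigenvalue admitting a positive eigenfunction. To conclude positivity of the spectral radius (so that the eigenvalue is genuinely there and not $0$) I would note that $r(T_{\varepsilon,\rho})\ge \langle T_{\varepsilon,\rho}u,\phi\rangle/\langle u,\phi\rangle$ for a suitable test pair, or simply that a compact irreducible positive operator has $r>0$. \textbf{The main obstacle} I anticipate is not any single computation but checking the precise hypotheses of the particular Banach-lattice version of Krein--Rutman being cited: some formulations require strong positivity relative to the order structure of $L^1$ (for the simplicity and uniqueness-of-positive-eigenvalue conclusions) rather than mere positivity of the kernel, so the real care lies in verifying irreducibility in the exact sense demanded by \cite{daners92} and in confirming $r(T_{\varepsilon,\rho})>0$, after which the theorem delivers every assertion of the proposition at once.
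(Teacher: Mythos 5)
Your proposal is correct and follows the same overall strategy as the paper: reduce the statement to the Krein--Rutman theorem for Banach lattices (\cite[Theorem 12.3]{daners92}) by verifying that $T_{\varepsilon,\rho}$ is bounded, positive, irreducible and compact on $L^1(\Omega)$. The positivity and irreducibility steps coincide with the paper's (for irreducibility the paper quotes \cite[Chapter V, \S 6, Example 4]{schaefer} for kernel operators with strictly positive continuous kernel, which is the same fact you argue directly via the absence of nontrivial invariant closed ideals). The one step where you genuinely depart from the paper is compactness, which is the bulk of the paper's proof: there, the kernel $\tilde{\kappa}$ is extended by zero outside $\Omega$ and Eveson's Kolmogorov--Riesz-type criterion \cite[Corollary 5.1]{eveson} is checked, namely a tail estimate plus $L^1$-continuity under translation obtained from dominated convergence. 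Your Arzel\`a--Ascoli route --- the kernel is continuous in $x$ uniformly in $y$ on the compact set $\overline{\Omega}\times\overline{\Omega}$, so the $L^1$-unit ball is mapped into a bounded, equicontinuous subset of $C(\overline{\Omega})$, which embeds continuously into $L^1(\Omega)$ because $\Omega$ is bounded --- is more elementary and self-contained, and it works here precisely because $\kappa$ and $a$ are continuous up to the boundary. One caution: your first suggested alternative for compactness (image of the unit ball bounded and equi-integrable) is \emph{not} sufficient on its own, since by Dunford--Pettis equi-integrability yields only relative weak compactness in $L^1$; norm compactness requires translation-continuity control (which is exactly what Eveson's criterion encodes) or the Arzel\`a--Ascoli argument you give, so the latter should be regarded as the actual proof of that step.
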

\begin{proof}
By the Krein-Rutman theorem for Banach lattices, see \cite[Theorem 			$12.3$]{daners92}, the problem reduces to proving that $T_{\varepsilon,\rho}$ is a compact positive irreducible operator.\\

	$T_{\varepsilon,\rho}$ is a positive operator by definition and the choice of $\rho$. It is also evident that $T_{\varepsilon,\rho}$ is a bounded operator. Defining
	\[\tilde{\kappa}(x,y):= \kappa(x,y) \frac{1+k\rho}							{(1+k\rho)(1+\varepsilon)-2a(y)},\]
	we obtain that $\tilde{\kappa}>0$  for $\rho \in (\bar{\rho}_{\varepsilon},+\infty)$ and $\tilde{\kappa}$ is continuous on 				$\overline{\Omega}^2$. Hence, 	$T_{\varepsilon,\rho}: L^1(\Omega) 			\rightarrow L^1(\Omega)$ is irreducible due to \cite[Chapter $V$, \S 	$6$, Example $4$]{schaefer}. 
	
	Additionally, according to 					\cite[Corollary $5.1$]{eveson}, for
	\[\bar{\kappa}(x,y) := \left\{\begin{array}{ll}
									\tilde{\kappa}(x,y), & y\in\Omega\\
									0, & y\notin \Omega
									\end{array}\right. ,\]
	 $T_{\varepsilon,\rho}: L^1(\Omega) 		\rightarrow L^1(\Omega)$ is compact if and only if for all $\iota>0$, 	there exist 	$\delta>0, R>0$ such that for almost all 					$x\in\Omega$ and 			for every 		$h\in\R$ with $\left| 		h\right| < \delta$ 
	\begin{align*}
    	\int\limits_{\R\setminus B_R(0)} 					\left| 					\bar{\kappa}(x,y)\right|\,\D y < \iota, \quad 								\int\limits_{\R} \left| \bar{\kappa}(x,y+h) - \bar{\kappa}				(x,y)\right| \,\D y < \iota.
	\end{align*}
	Let $\iota>0$ be arbitrary but fixed.

	As $\Omega$ is bounded, let us choose $R>0$ such that
	\begin{equation*}
	\left| 						\Omega\setminus B_R(0)\right|< 				\frac{\iota}					{\max\limits_{(x,y)\in\overline{\Omega}^2}\tilde{\kappa}(x,y)}.
	\end{equation*}
	 Then,
	\[\int\limits_{\R\setminus B_R(0)} \left| \bar{\kappa}(x,y)\right|		\,\D y = \int\limits_{\Omega\setminus B_R(0)} \left| \tilde{\kappa}			(x,y)\right|\,\D y \leq \max\limits_{(x,y)\in\overline{\Omega}^2} 			\tilde{\kappa}(x,y)\left| \Omega\setminus B_R(0) \right| <	\iota.\]
	Due to the dominated convergence theorem and the continuity of 				$\tilde{\kappa}$, it holds
	\[\int\limits_{\R} \left| \bar{\kappa}(x,y+h) - \bar{\kappa}				(x,y)\right|\,\D y = \integral \left| \tilde{\kappa}(x,y+h)- 				\tilde{\kappa}(x,y)\right|\,\D y < \iota,\]
	for $\left| h\right|$ small enough, which completes the proof. \\
\end{proof}
From the previous proposition we have that the operator $T_{\varepsilon,\rho}$ admits a strictly positive eigenfunction corresponding to the eigenvalue $r(T_{\varepsilon,\rho})$. What is left to show, in order to obtain equilibria, is that it is possible to choose $\rho$ such that $r(T_{\varepsilon,\rho}) = 1$ and that this choice of $\rho$ is unique (remember that showing $r(T_{\varepsilon,\rho})=1$ is equivalent to showing $\eps{\lambda}(\rho) = 0$). \\
The idea is to prove that $r(T_{\varepsilon,\rho})$ is continuous with respect to $\rho$ and strictly monotone. This ansatz goes back to \cite{burgerpert}.
\begin{lemma}\label{lem:mut:specmon}
Let $T_{\varepsilon,\rho}$ be the linear operator defined in \eqref{def:mut:t} for $\rho\in (\bar{\rho}_{\varepsilon},+\infty)$. Its spectral radius, $r(T_{\varepsilon,\rho})$	is a continuous function of $\rho$. Moreover, there exists $\varepsilon_0$ such that for $\varepsilon<\varepsilon_0$,  $r(T_{\varepsilon,\rho})$ is strictly decreasing.
\end{lemma}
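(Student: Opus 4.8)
The plan is to exploit the special structure of the kernel of $T_{\varepsilon,\rho}$ exhibited in \eqref{def:mut:t}. Writing
\[
g_\rho(y) := \frac{1+k\rho}{(1+k\rho)(1+\varepsilon)-2a(y)},
\]
the operator reads $T_{\varepsilon,\rho}u(x)=\varepsilon\int_\Omega\kappa(x,y)\,g_\rho(y)\,u(y)\,\D y$, so that the \emph{entire} dependence on $\rho$ is carried by the strictly positive multiplier $g_\rho$ (recall that $(1+k\rho)(1+\varepsilon)-2a(y)>0$ for $\rho>\bar\rho_\varepsilon$). I would base both assertions on two elementary properties of the spectral radius of positive operators on the Banach lattice $L^1(\Omega)$: positive homogeneity, $r(cS)=c\,r(S)$ for $c\ge 0$, and monotonicity, $0\le S\le S'\Rightarrow r(S)\le r(S')$.

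For continuity, fix $\rho_0\in(\bar\rho_\varepsilon,+\infty)$ and, for $\rho$ near $\rho_0$, set $m(\rho):=\min_{y\in\overline\Omega} g_\rho(y)/g_{\rho_0}(y)$ and $M(\rho):=\max_{y\in\overline\Omega} g_\rho(y)/g_{\rho_0}(y)$. Since the factorised kernels differ only by their $y$-multiplier, these scalar bounds yield the operator sandwich $m(\rho)\,T_{\varepsilon,\rho_0}\le T_{\varepsilon,\rho}\le M(\rho)\,T_{\varepsilon,\rho_0}$, whence by homogeneity and monotonicity $m(\rho)\,r(T_{\varepsilon,\rho_0})\le r(T_{\varepsilon,\rho})\le M(\rho)\,r(T_{\varepsilon,\rho_0})$. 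Because $g_{(\cdot)}(\cdot)$ is continuous and strictly positive on the compact set $[\rho_0-\delta,\rho_0+\delta]\times\overline\Omega$, both $m(\rho)$ and $M(\rho)$ tend to $1$ as $\rho\to\rho_0$, and continuity of $r(T_{\varepsilon,\rho})$ follows; this argument even gives local Lipschitz continuity and needs neither compactness nor perturbation theory.

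For strict monotonicity I would differentiate the multiplier: a direct computation gives $\partial_\rho g_\rho(y)=\frac{-2k\,a(y)}{\big((1+k\rho)(1+\varepsilon)-2a(y)\big)^2}<0$ for every $y$, using $a(y)>0$ and $\rho>\bar\rho_\varepsilon$. Hence for $\rho<\rho'$ one has $g_{\rho'}(y)<g_\rho(y)$ for all $y$, i.e. $0\le T_{\varepsilon,\rho'}\le T_{\varepsilon,\rho}$ with strict pointwise inequality of the (continuous, strictly positive) kernels. Proposition \ref{prop:mut:eigenvalue} already provides that $T_{\varepsilon,\rho}$ is compact, irreducible and positive with $r(T_{\varepsilon,\rho})$ an algebraically simple dominant eigenvalue carrying a strictly positive eigenfunction. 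Invoking the strict comparison principle for the spectral radius of irreducible positive operators (if $0\le S\le T$ with $T$ irreducible and $r(S)=r(T)$, then $S=T$), the strict kernel domination forces $r(T_{\varepsilon,\rho'})<r(T_{\varepsilon,\rho})$. Equivalently, I could differentiate the simple eigenvalue (Kato perturbation theory) via the Hadamard-type formula $r'(\rho)=\langle\phi_\rho,(\partial_\rho T_{\varepsilon,\rho})\psi_\rho\rangle/\langle\phi_\rho,\psi_\rho\rangle$, where $\psi_\rho>0$ is the eigenfunction of Proposition \ref{prop:mut:eigenvalue} and $\phi_\rho>0$ is the eigenfunctional of the adjoint (again supplied by irreducibility); since $\partial_\rho T_{\varepsilon,\rho}$ has a strictly negative kernel, $r'(\rho)<0$.

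The main obstacle is exactly the passage from the weak inequality $r(T_{\varepsilon,\rho'})\le r(T_{\varepsilon,\rho})$, which is immediate, to the strict one. Strictness requires more than positivity: one must use the irreducibility and compactness from Proposition \ref{prop:mut:eigenvalue} together with either the strict comparison theorem or the strict positivity of the adjoint eigenfunctional, the latter needing some care in the non-reflexive space $L^1(\Omega)$. I note that this reasoning in fact yields strict monotonicity on the whole admissible range $(\bar\rho_\varepsilon,+\infty)$ for every $\varepsilon\in(0,1]$, so the threshold $\varepsilon_0$ is not forced by the monotonicity itself; I would expect it to enter either through a more quantitative, perturbative control of $r(T_{\varepsilon,\rho})$ in the small-mutation regime $\varepsilon\to 0$ relevant to Section $4$, or simply to guarantee $\bar\rho_\varepsilon>0$, which holds precisely when $\varepsilon<2\bar a-1$.
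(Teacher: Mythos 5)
Your proof is correct, but it takes a genuinely different route from the paper's. For continuity, the paper invokes perturbation theory (continuity of a finite system of eigenvalues of a closed operator, \cite[Chapter IV, \S 3.5]{kato}), whereas you obtain it from the elementary sandwich $m(\rho)\,T_{\varepsilon,\rho_0}\le T_{\varepsilon,\rho}\le M(\rho)\,T_{\varepsilon,\rho_0}$ together with homogeneity and domination--monotonicity of the spectral radius of positive operators; this is lighter, self-contained, and even gives local Lipschitz continuity. For monotonicity, the paper goes through Gelfand's formula: it computes $T^{n}_{\varepsilon,\rho}$ explicitly by induction, differentiates the $n$-fold kernel in $\rho$ (Appendix B), deduces the weak inequality $r(T_{\varepsilon,\rho_1})\ge r(T_{\varepsilon,\rho_2})$ for $\rho_1<\rho_2$, and delegates strictness to the argument of \cite{silvia1}. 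You instead differentiate only the scalar multiplier $g_\rho$, which carries the entire $\rho$-dependence, and convert the resulting strict kernel domination into strict inequality of spectral radii via the strict comparison theorem for compact irreducible positive operators; that theorem (or, equivalently, the adjoint-pairing computation $r(T_{\varepsilon,\rho'})\langle\phi^{*},u_{\rho'}\rangle=\langle\phi^{*},T_{\varepsilon,\rho'}u_{\rho'}\rangle<\langle\phi^{*},T_{\varepsilon,\rho}u_{\rho'}\rangle=r(T_{\varepsilon,\rho})\langle\phi^{*},u_{\rho'}\rangle$, with $u_{\rho'}>0$ the eigenfunction of $T_{\varepsilon,\rho'}$ and $\phi^{*}>0$ the adjoint eigenfunctional of $T_{\varepsilon,\rho}$ supplied by irreducibility, cf.\ \cite[Chapter V]{schaefer}) is the only nonstandard ingredient and should be given a precise citation, since, as you rightly note, strict positivity of the adjoint eigenvector needs care in $L^{1}$. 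What your route buys: it is shorter, avoids the induction entirely, handles strictness without outsourcing it, and shows strict decrease on all of $(\bar{\rho}_{\varepsilon},+\infty)$ for every $\varepsilon\in(0,1]$ --- your observation that the threshold $\varepsilon_0$ is an artifact of the paper's proof is accurate, since the final step of the induction in Appendix B needs $1+k\rho>(1+k\rho)(1+\varepsilon)-2a(y_{n+1})$, a restriction your multiplier computation never encounters. What the paper's route buys: Gelfand's formula requires only positivity and norm comparison, so weak monotonicity is reached without any appeal to comparison theorems or adjoint eigenvectors in the non-reflexive $L^{1}$ setting.
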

%
%
\begin{proof}[Proof of Lemma \ref{lem:mut:specmon}]
Continuity of $r(T_{\varepsilon,\rho})$ with respect to $\rho$ follows from the continuity of a finite system of eigenvalues of a closed operator (\cite[Chapter IV, \S $3.5$]{kato}).\\
	For the monotonicity, we use Gelfand's formula for the spectral radius of a bounded linear operator $A$ on a Banach space
	\[r(A) = \lim\limits_{n\rightarrow\infty} \left\| A^n\right\|				_{\infty}^{\frac1n},\]
	where $\left\|.\right\|_{\infty}$ is the operator norm. We have to 			show that
	\[\left\| T_{\varepsilon,\rho_1}^n \right\|_{\infty}^{\frac1n} > 			\left\| T_{\varepsilon,\rho_2}^n\right\|_{\infty}^{\frac1n}\;				\mbox{for}\; \rho_1<\rho_2.\]
	A straightforward proof by induction provides the formula 
	\[ \begin{array}{rcl}T^n_{\varepsilon,\rho} u(x)& =& \int\limits_{\Omega^n} 						\kappa(x,y_n)\prod\limits_{i=1}^{n-1} 										\kappa(y_i,y_{i+1})\\ & & \frac{(1+k\rho)^n}{\prod\limits_{i=1}^n 					((1+k\rho)(1+ \varepsilon)-2a(y_i))}u(y_n)\,\D y_1 			\cdots \D y_n.\end{array}\]
	In order to obtain monotonicity, we compute the derivative 					of 	$T^n_{\varepsilon,\rho}$ with respect to $\rho$. Another 				straightforward proof by induction (see Appendix B) shows that $\tfrac{\D}					{\D\rho}T^n_{\varepsilon,\rho}<0$ for $\varepsilon$ small enough. Thus $T_{\varepsilon,\rho_1}^n u > T_{\varepsilon,			\rho_2}^nu$ for all $u\in L^1(\Omega), u\geq 0$ and $\rho_1< 				\rho_2$. Then, taking the operator norm on both 	sides and using 		that the function $x\mapsto x^{\frac1n}$ is strictly monotone, we 			obtain
	\[\left\| T_{\varepsilon,\rho_1}^n \right\|_{\infty}^{\frac{1}{n}} \geq \left\| T_{\varepsilon,				\rho_2}^n\right\|_{\infty}^{\frac1n} \;\mbox{for all}\; n\in\N \Rightarrow r(T_{\varepsilon,\rho_1}) \geq r(T_{\varepsilon,			\rho_2})\quad\mbox{for}\; \rho_1<\rho_2.\]
 	The argument for strict monotonicity is the same 		as in 			Ref.	\cite{silvia1}. 
\end{proof}
Up to this point we have showed that the spectral radius $r(T_{\varepsilon,\rho})$ is a continuous and strictly decreasing function of $\rho$. Hence it is necessary to prove that there exists some $\rho \in (\bar{\rho}_{\varepsilon},\infty)$ such that $r(T_{\varepsilon,\rho})=1$. This is provided by 
\begin{lemma}\label{lem:mut:unique}
	For all $\varepsilon<\varepsilon_0$, there exists a unique 					$\rho\in(\bar{\rho}_{\varepsilon},\infty)$ such that $$r(T_{\varepsilon,\rho})=1.$$ 
\end{lemma}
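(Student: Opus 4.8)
The plan is to combine the two properties already obtained in Lemma \ref{lem:mut:specmon} --- continuity of $\rho\mapsto r(T_{\varepsilon,\rho})$ on $(\bar{\rho}_{\varepsilon},\infty)$ and, for $\varepsilon<\varepsilon_0$, its strict monotonicity --- with an intermediate value argument. Uniqueness is then immediate: a strictly decreasing function attains each value at most once, so there is at most one $\rho$ with $r(T_{\varepsilon,\rho})=1$. The entire content therefore lies in \emph{existence}, for which it suffices to show that $r(T_{\varepsilon,\rho})$ exceeds $1$ for $\rho$ close to $\bar{\rho}_{\varepsilon}$ and falls below $1$ for $\rho$ large; the continuous function must then cross the value $1$.

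For the behaviour at large $\rho$ I would use the operator-norm bound $r(T_{\varepsilon,\rho})\le\|T_{\varepsilon,\rho}\|$. Since $T_{\varepsilon,\rho}$ is a positive integral operator on $L^1(\Omega)$, its norm equals the essential supremum over $y$ of the column mass $\int_\Omega t(x,y)\,\D x$, where $t(x,y)=\varepsilon\kappa(x,y)\tfrac{1+k\rho}{(1+k\rho)(1+\varepsilon)-2a(y)}$. Using the normalisation $\int_\Omega\kappa(x,y)\,\D x=1$ from Assumption \ref{ass:mut:2}, this reduces to $\|T_{\varepsilon,\rho}\|=\varepsilon\tfrac{1+k\rho}{(1+k\rho)(1+\varepsilon)-2\bar{a}}$, the supremum being attained at the maximiser of $a$. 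As $\rho\to\infty$ this tends to $\tfrac{\varepsilon}{1+\varepsilon}<1$, so $r(T_{\varepsilon,\rho})<1$ for all sufficiently large $\rho$ (concretely for $\rho>\tfrac{2\bar{a}-1}{k}$, which is positive and lies above $\bar{\rho}_{\varepsilon}$ because $\bar{a}>\tfrac12$).

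The main obstacle is the lower bound near $\rho=\bar{\rho}_{\varepsilon}$, where the norm estimate is useless (the norm in fact blows up, but that bounds $r$ from above, not below). Here I would invoke a Collatz--Wielandt-type estimate for the positive operator $T_{\varepsilon,\rho}$: if $T_{\varepsilon,\rho}\psi\ge\mu\psi$ pointwise for some $\psi\ge0$, $\psi\not\equiv0$, then iterating gives $T^n_{\varepsilon,\rho}\psi\ge\mu^n\psi\ge0$, so that $\|T^n_{\varepsilon,\rho}\|\,\|\psi\|_1\ge\|T^n_{\varepsilon,\rho}\psi\|_1\ge\mu^n\|\psi\|_1$, whence $r(T_{\varepsilon,\rho})\ge\mu$ by Gelfand's formula. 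Testing with $\psi\equiv1$ and using strict positivity of $\kappa$ (so $\kappa\ge\kappa_{\min}>0$ on the compact set $\overline{\Omega}^2$) yields the uniform bound
\[T_{\varepsilon,\rho}\mathbf{1}(x)\;\ge\;\varepsilon(1+k\rho)\,\kappa_{\min}\int_\Omega\frac{\D y}{(1+k\rho)(1+\varepsilon)-2a(y)}\qquad(x\in\Omega).\]

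The crux is that the remaining integral diverges as $\rho\downarrow\bar{\rho}_{\varepsilon}$. By definition of $\bar{\rho}_{\varepsilon}$ the denominator decreases monotonically to $2(\bar{a}-a(y))$, and since $a\in C^1(\overline{\Omega})$ one has $\bar{a}-a(y)\le L|y-\bar{x}|$ near the unique maximiser $\bar{x}$ (with $L=\|a'\|_\infty$), so the integrand dominates $\tfrac{1}{2L|y-\bar{x}|}$, which is not integrable in one dimension. By monotone convergence the integral tends to $+\infty$, hence $T_{\varepsilon,\rho}\mathbf{1}(x)\to\infty$ uniformly in $x$. Consequently, for $\rho$ sufficiently close to $\bar{\rho}_{\varepsilon}$ one has $T_{\varepsilon,\rho}\mathbf{1}\ge\mu\,\mathbf{1}$ with $\mu>1$, giving $r(T_{\varepsilon,\rho})>1$. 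Combining this with the large-$\rho$ estimate and the continuity from Lemma \ref{lem:mut:specmon} produces a crossing point $\rho\in(\bar{\rho}_{\varepsilon},\infty)$ with $r(T_{\varepsilon,\rho})=1$, and strict monotonicity (valid for $\varepsilon<\varepsilon_0$) makes it unique, completing the proof.
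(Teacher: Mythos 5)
Your proposal is correct, and its skeleton coincides with the paper's: uniqueness from the strict monotonicity of Lemma \ref{lem:mut:specmon}, existence from continuity plus the intermediate value theorem, with $r(T_{\varepsilon,\rho})<1$ for large $\rho$ and $r(T_{\varepsilon,\rho})>1$ near $\bar{\rho}_{\varepsilon}$. The genuine difference is in how the endpoint behaviour is justified, and here your version is actually stronger than the paper's. For large $\rho$ the paper passes to the limit operator $T_{\varepsilon}=\frac{\varepsilon}{1+\varepsilon}\int_{\Omega}\kappa(x,y)\cdot\,\D y$ and invokes continuity, whereas you compute the $L^1$ operator norm exactly, $\|T_{\varepsilon,\rho}\|=\varepsilon\frac{1+k\rho}{(1+k\rho)(1+\varepsilon)-2\bar{a}}$, obtaining the clean explicit threshold $\rho>\frac{2\bar{a}-1}{k}$; both are fine. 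Near $\bar{\rho}_{\varepsilon}$, however, the paper only writes out the formula for $\|T_{\varepsilon,\rho}\|_{\infty}$, notes Gelfand's formula, and asserts $\lim_{\rho\to\bar{\rho}_{\varepsilon}}r(T_{\varepsilon,\rho})=+\infty$; as literally stated this is a non sequitur, since blow-up of the norm only controls the spectral radius from \emph{above} ($r(T)\leq\|T\|$), and positive compact operators can have norm far exceeding their spectral radius. Your Collatz--Wielandt step supplies exactly the missing lower bound: from $T_{\varepsilon,\rho}\mathbf{1}\geq\mu\mathbf{1}$ with $\mu=\varepsilon(1+k\rho)\kappa_{\min}\int_{\Omega}\bigl((1+k\rho)(1+\varepsilon)-2a(y)\bigr)^{-1}\D y$ one gets $r(T_{\varepsilon,\rho})\geq\mu$, and $\mu\to\infty$ by monotone convergence because $a\in C^1(\overline{\Omega})$ forces $\bar{a}-a(y)\leq L|y-\bar{x}|$, which is non-integrable in dimension one. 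This makes explicit where strict positivity of $\kappa$ (Assumption \ref{ass:mut:2}) and one-dimensionality of $\Omega$ enter, and can be read as a rigorous repair of the step the paper glosses over; the trade-off is only that your argument is longer.
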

\begin{proof}
	 We 		observe that
	\[ \lim\limits_{\rho\rightarrow \infty} T_{\varepsilon,\rho} = \frac{\varepsilon}											{1+\varepsilon}\integral 										\kappa(x,y)\eps{u}(y)\,\D y := 				T_{\varepsilon}.\]
	$T_{\varepsilon}$ is a bounded operator, hence the spectrum is bounded.  Since $r(T_{\varepsilon})\leq \left\| 						T_{\varepsilon}\right\|_{\infty}$ we obtain $r(T_{\varepsilon}) 			<1$ . As $r(T_{\varepsilon,\rho})$ 			is	continuous with respect to $\rho$, we can find $\rho_1\in 				(\bar{\rho}_{\varepsilon},\infty)$ such that $r(T_{\varepsilon,\rho_1})<1$. 
	On the other hand, since \begin{eqnarray*}
		\left\| T_{\varepsilon,\rho}\right\|_{\infty} &=& 							\sup\limits_{\substack{u\in L^1(\Omega)\\ \left\| u\right\|					_{L^1(\Omega)} =1}} \left| 					\varepsilon 					\integral \kappa(x,y) \frac{1+k\rho}{((1+k\rho)(1+ 							\varepsilon)-2a(y))} u(y)\,\D y	\, \D x\right| 
	\end{eqnarray*}
	and $r(T_{\varepsilon,\rho}) = \lim\limits_{n\rightarrow\infty} \left\|T_{\varepsilon,\rho}^{n}\right\|^{\frac{1}{n}}$ we have that $\lim_{\rho \to \bar{\rho}_{\varepsilon}}r(T_{\varepsilon,\rho})= +\infty$.  Lemma \ref{lem:mut:specmon} and the intermediate value theorem imply the statement.
\end{proof}
We can now formulate the theorem giving existence and uniqueness of steady states of system \eqref{eq:mut:sys}.
\begin{theorem}\label{thm:mut:existe}
	There exists some $\varepsilon_0>0$ such that for all $\varepsilon<\varepsilon_0$ there exists a 	unique, non-trivial  steady state $(v_{\varepsilon},u_{\varepsilon})$ of system \eqref{eq:mut:sys}.
\end{theorem}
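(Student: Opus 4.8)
The plan is to assemble Proposition~\ref{prop:mut:eigenvalue}, Lemma~\ref{lem:mut:specmon} and Lemma~\ref{lem:mut:unique} into a single construction and then to prove uniqueness by a sign-forcing argument on the second equation of \eqref{eq:mut:steady}. Fix $\varepsilon<\varepsilon_0$, with $\varepsilon_0$ the threshold from Lemma~\ref{lem:mut:specmon}. For existence, Lemma~\ref{lem:mut:unique} provides a unique $\rho_{\varepsilon}\in(\bar{\rho}_{\varepsilon},\infty)$ with $r(T_{\varepsilon,\rho_{\varepsilon}})=1$, and Proposition~\ref{prop:mut:eigenvalue} makes this an algebraically simple eigenvalue with a strictly positive eigenfunction $\psi_{\varepsilon,\rho_{\varepsilon}}$. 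Undoing the equivalence recorded just before \eqref{def:mut:t}, I set $\varphi_{\varepsilon,\rho_{\varepsilon}}:=-B_{\varepsilon,\rho_{\varepsilon}}^{-1}\psi_{\varepsilon,\rho_{\varepsilon}}$, which solves $C_{\varepsilon,\rho_{\varepsilon}}\varphi_{\varepsilon,\rho_{\varepsilon}}=0$; since $\rho_{\varepsilon}>\bar{\rho}_{\varepsilon}$ the multiplier defining $-B_{\varepsilon,\rho_{\varepsilon}}$ is strictly positive on $\overline{\Omega}$, so $\varphi_{\varepsilon,\rho_{\varepsilon}}>0$. The amplitude is then fixed by the constant $c_{\varepsilon}$ displayed above: its denominator is strictly positive because $a<1$ and $\rho_{\varepsilon}>0$ give $1-\tfrac{a(x)}{1+k\rho_{\varepsilon}}>0$, so $c_{\varepsilon}\in(0,\infty)$ and $u_{\varepsilon}:=c_{\varepsilon}\varphi_{\varepsilon,\rho_{\varepsilon}}$ together with $\rho_{\varepsilon}$ solves \eqref{eq:mut:steady}. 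Finally I recover $v_{\varepsilon}$ from the first equilibrium equation of \eqref{eq:mut:sys}, namely $v_{\varepsilon}(x)=\tfrac{2p}{d}\bigl(1-\tfrac{a(x)}{1+k\rho_{\varepsilon}}\bigr)u_{\varepsilon}(x)>0$, and the defining relation for $c_{\varepsilon}$ guarantees $\int_{\Omega}v_{\varepsilon}\,\D x=\rho_{\varepsilon}$, so the reduction to \eqref{eq:mut:sys2} is consistent and $(v_{\varepsilon},u_{\varepsilon})$ is a genuine non-trivial steady state of \eqref{eq:mut:sys}.

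For uniqueness I would take an arbitrary non-trivial (nonnegative, not identically zero) steady state $(v,u)$ of \eqref{eq:mut:sys}, put $\rho:=\int_{\Omega}v\,\D x\geq 0$, so that $(\rho,u)$ solves \eqref{eq:mut:steady}, and rewrite its second equation as
\begin{equation*}
\Bigl((1+\varepsilon)-\tfrac{2a(x)}{1+k\rho}\Bigr)p\,u(x)=\varepsilon p\integral\kappa(x,y)u(y)\,\D y .
\end{equation*}
Because $\kappa>0$ and $u\geq 0$ is not identically zero, the right-hand side is strictly positive for every $x$; hence the left-hand bracket must be positive and $u(x)>0$ wherever the equation holds, which self-improves to $u>0$ on $\Omega$ together with $(1+\varepsilon)-\tfrac{2a(x)}{1+k\rho}\geq 0$ for all $x$. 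Evaluating the latter at $\bar{x}$ gives $\rho\geq\bar{\rho}_{\varepsilon}$. Once $\rho>\bar{\rho}_{\varepsilon}$ is in force, $u$ is a strictly positive eigenfunction of $C_{\varepsilon,\rho}$ for the eigenvalue $0$, so $-B_{\varepsilon,\rho}u$ is a strictly positive eigenfunction of $T_{\varepsilon,\rho}$ for the eigenvalue $1$; by the uniqueness-of-positive-eigenfunction clause of Proposition~\ref{prop:mut:eigenvalue} this forces $r(T_{\varepsilon,\rho})=1$, whence $\rho=\rho_{\varepsilon}$ by Lemma~\ref{lem:mut:unique}. Algebraic simplicity makes the eigenspace one-dimensional, so $u=c\,\varphi_{\varepsilon,\rho_{\varepsilon}}$, and the first equation of \eqref{eq:mut:steady} pins down $c=c_{\varepsilon}$; thus $u=u_{\varepsilon}$ and $v=v_{\varepsilon}$.

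The hard part is closing the gap between $\rho\geq\bar{\rho}_{\varepsilon}$ and the strict inequality $\rho>\bar{\rho}_{\varepsilon}$ needed above, i.e. excluding the degenerate value $\rho=\bar{\rho}_{\varepsilon}$ at which $B_{\varepsilon,\rho}^{-1}$ and the positivity of $T_{\varepsilon,\rho}$ break down and the Krein--Rutman analysis is silent. Here I would exploit the regularity in Assumption~\ref{ass:mut:2}: at $\rho=\bar{\rho}_{\varepsilon}$ the bracket equals $\tfrac{1+\varepsilon}{\bar a}(\bar a-a(x))$, so the displayed identity forces $u(x)=\tfrac{\varepsilon\bar a}{(1+\varepsilon)(\bar a-a(x))}\integral\kappa(x,y)u(y)\,\D y$, and since the integral factor is bounded below by a positive constant on $\overline{\Omega}$, one gets $u(x)\gtrsim(\bar a-a(x))^{-1}$ near $\bar{x}$. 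Because $a\in C^1$, the difference $\bar a-a$ vanishes at least linearly at $\bar{x}$, so $(\bar a-a)^{-1}$ is non-integrable in one dimension; this forces $u\notin L^1(\Omega)$, contradicting $u\in L^1(\Omega)$ and ruling out $\rho=\bar{\rho}_{\varepsilon}$. This is exactly the point where the $C^1$ (rather than merely continuous) hypothesis on $a$ is used. A minor bookkeeping point to verify is that steady states of \eqref{eq:mut:sys} and of the reduced system \eqref{eq:mut:sys2} correspond bijectively: integrating the first equation of \eqref{eq:mut:sys} over $\Omega$ yields the first line of \eqref{eq:mut:sys2}, and conversely the recovery formula for $v_{\varepsilon}$ reproduces $\rho_{\varepsilon}=\int_{\Omega}v_{\varepsilon}\,\D x$, so uniqueness for \eqref{eq:mut:sys2} transfers to \eqref{eq:mut:sys}.
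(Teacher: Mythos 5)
Your proof is correct, and its existence half coincides with the paper's own argument: the paper likewise combines Proposition \ref{prop:mut:eigenvalue} with Lemma \ref{lem:mut:unique} to obtain the unique steady state $(\rho_{\varepsilon},u_{\varepsilon})$ of \eqref{eq:mut:sys2} and then recovers $v_{\varepsilon}$ from the first equilibrium equation of \eqref{eq:mut:sys}. Where you differ is the uniqueness half, and there your argument is genuinely more complete than the paper's. The published proof tacitly identifies ``steady state'' with ``solution of the eigenvalue problem for $T_{\varepsilon,\rho}$ with $\rho\in(\bar{\rho}_{\varepsilon},+\infty)$'', which is the only range of $\rho$ on which $T_{\varepsilon,\rho}$ is a positive operator and on which Proposition \ref{prop:mut:eigenvalue} and Lemma \ref{lem:mut:unique} say anything; it never verifies that an arbitrary nonnegative non-trivial equilibrium must have $\rho>\bar{\rho}_{\varepsilon}$. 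Your sign-forcing step supplies exactly this: since $\varepsilon p\integral\kappa(x,y)u(y)\,\D y>0$, the bracket $(1+\varepsilon)-\frac{2a(x)}{1+k\rho}$ must be positive almost everywhere, whence $\rho\geq\bar{\rho}_{\varepsilon}$ by continuity at $\bar{x}$; and the borderline case $\rho=\bar{\rho}_{\varepsilon}$ is eliminated by your non-integrability argument, which is sound: $a\in C^{1}(\overline{\Omega})$ is Lipschitz, so $\bar{a}-a(x)\leq L\left|x-\bar{x}\right|$, and the resulting lower bound $u(x)\geq c/\left|x-\bar{x}\right|$ near $\bar{x}$ (with $c$ coming from $\min_{\overline{\Omega}^{2}}\kappa\cdot\left\|u\right\|_{L^{1}(\Omega)}>0$) is incompatible with $u\in L^{1}(\Omega)$ in one space dimension; if $\bar{x}$ is interior the contradiction is even stronger, since $a'(\bar{x})=0$ makes $\bar{a}-a$ vanish superlinearly. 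What the paper's terse proof buys is brevity; what yours buys is a statement that genuinely holds in the class of all nonnegative, non-trivial $L^{1}$ equilibria, together with an explicit (and otherwise invisible) use of the $C^{1}$ hypothesis on $a$. Two caveats for the record: in both proofs uniqueness is uniqueness among nonnegative states (sign-changing equilibria are not discussed), and your assertion $\rho_{\varepsilon}>0$ uses $\bar{a}>\tfrac12$ together with $\varepsilon<2\bar{a}-1$, which is consistent with the theorem's ``$\varepsilon$ small enough''.
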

\begin{proof}
	Combination of Proposition \ref{prop:mut:eigenvalue} and Lemma \ref{lem:mut:unique} yields the existence of a unique, non-trivial steady state $(\rho_{\varepsilon},u_{\varepsilon})$ of system \eqref{eq:mut:sys2}. Substituting it in the first equilibrium equation in system \eqref{eq:mut:sys}, we obtain the first component of its unique, non-trivial steady state $(v_{\varepsilon},u_{\varepsilon})$.
\end{proof}
\section{Convergence of the steady states}

Once we have proved, for $\varepsilon$ small enough, existence of a stationary solution $(v_{\varepsilon},u_{\varepsilon})$ of system \eqref{eq:mut:sys}, we are interested in the behavior of this steady state when the mutation rate goes to zero.\\

 In order to study it, we recall the equivalence between the eigenvalue problem for the operators $C_{\varepsilon,\rho}=B_{\varepsilon,\rho}  + K_{\varepsilon}$ and $T_{\varepsilon,\rho}=K_{\varepsilon} (B_{\varepsilon,\rho})^{-1}$ which is 
\begin{eqnarray*}
	B_{\varepsilon,\rho} \varphi_{\varepsilon,\rho} + K_{\varepsilon} 			\varphi_{\varepsilon,\rho} &=& \lambda_{\varepsilon}(\rho) 					\varphi_{\varepsilon,\rho} \\
	\Leftrightarrow K_{\varepsilon} (\lambda_{\varepsilon}(\rho) - 			B_{\varepsilon,\rho})^{-1}\psi_{\varepsilon,\rho} &=& 						\psi_{\varepsilon,\rho},
\end{eqnarray*}
for $\psi_{\varepsilon,\rho} = (\lambda_{\varepsilon}(\rho) - B_{\varepsilon,\rho})\varphi_{\varepsilon,\rho}$. That is, $\varphi_{\varepsilon,\rho}$ is an eigenfunction of 
 $C_{\varepsilon,\rho}$ corresponding to eigenvalue $\eps{\lambda}(\rho)$ if and only if $1$ is an eigenvalue of $K_{\varepsilon} (\lambda_{\varepsilon}(\rho) - 			B_{\varepsilon,\rho})^{-1}$ with eigenfunction $
\psi_{\varepsilon,\rho}$.\\

The proof of convergence of the steady state consists of the following steps: We begin by showing that $\lambda_{\varepsilon}(\rho)$ is a strictly dominant eigenvalue of $C_{\varepsilon,\rho}$ with a corresponding strictly positive eigenfunction. Then we show that for $\varepsilon\rightarrow 0$,  $\lambda_{\varepsilon}(\rho) \rightarrow \max_{x\in \overline{\Omega}}\left(\frac{2a(x)}{1+k\rho}-1\right)p$ pointwise, from which we conclude convergence of the corresponding eigenfunctions $\varphi_{\varepsilon,\rho}$. Additionally, convergence of the eigenvalues $\eps{\lambda}(\rho)$ allows deducing convergence of zeros of the eigenvalues, $\eps{\rho}$, and ultimately convergence of the steady state.

\subsection{Existence of eigenvalues}
Following \cite[Theorem $2.2$]{burgerpert}, in order to show that $\lambda_{\varepsilon}(\rho)$ is a strictly dominant eigenvalue of $C_{\varepsilon,\rho}$ with a corresponding strictly positive eigenfunction,
 it is sufficient to find some $$\lambda_1 > s(B_{\varepsilon,\rho})=\max\limits_{x\in\overline{\Omega}} \left( 					\frac{2a(x)}{1+k\rho} - (1+ \varepsilon)\right)p$$ such that $r(K_{\varepsilon} (\lambda_1Id - B_{\varepsilon,\rho})^{-1})>1$, (where $s(A)$ and $r(A)$ denote respectively the spectral bound and the spectral radius of a linear operator $A$).

We use the following characterization of the spectral bound of the generator $A$ of a strongly continuous positive semigroup
\begin{equation}
\label{car}
s(A) \geq \sup\{ \mu \in \mathbb{R} \quad : \quad Af \geq \mu f \quad \text{for some} \quad 0 < f \in D(A)\}.
\end{equation}
This property of the spectral bound is stated in Ref. \cite{nagel} in $C(K)$, the space of all
real-valued continuous functions on a compact space $K$, but the proof also holds for any generator of a positive semigroup in a Banach lattice such that the spectral bound and the growth bound coincide, which is the case in  $L^{p}$-space, $1 \leq p < \infty$, \cite{weis}.

\begin{prop}\label{prop:mut:existe2}
	There exists $\lambda_1 > s(B_{\varepsilon,\rho})$ such that 				$r(K_{\varepsilon}(\lambda_1Id - B_{\varepsilon,\rho})^{-1})>1$. 
\end{prop}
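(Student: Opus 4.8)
The plan is to exploit that $B_{\varepsilon,\rho}$ is a multiplication operator, so that the operator $S_{\lambda_1}:=K_{\varepsilon}(\lambda_1 Id - B_{\varepsilon,\rho})^{-1}$ is an explicit positive integral operator whose spectral radius blows up as $\lambda_1$ decreases to $s(B_{\varepsilon,\rho})$. Writing $g(x):=\left(\frac{2a(x)}{1+k\rho}-(1+\varepsilon)\right)p$, we have $s(B_{\varepsilon,\rho})=\max_{x\in\overline{\Omega}}g(x)=g(\bar{x})$ by Assumption \ref{ass:mut:2}, and for $\lambda_1>s(B_{\varepsilon,\rho})$ the resolvent $(\lambda_1 Id - B_{\varepsilon,\rho})^{-1}$ is multiplication by the strictly positive bounded function $(\lambda_1-g(x))^{-1}$. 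Hence
\[ S_{\lambda_1}u(x) = \varepsilon p \integral \kappa(x,y)\frac{u(y)}{\lambda_1-g(y)}\,\D y, \]
which is a positive bounded operator on $L^1(\Omega)$.

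Next I would obtain a lower bound on $r(S_{\lambda_1})$ by testing against the constant function $\mathbf{1}\in L^1(\Omega)$ (which is integrable since $\Omega$ is bounded). Setting $\kappa_{\min}:=\min_{\overline{\Omega}^2}\kappa>0$, we get for every $x$ that
\[ S_{\lambda_1}\mathbf{1}(x)\geq \varepsilon p\,\kappa_{\min}\integral \frac{1}{\lambda_1-g(y)}\,\D y =: M(\lambda_1), \]
so that $S_{\lambda_1}\mathbf{1}\geq M(\lambda_1)\,\mathbf{1}$ pointwise. Iterating and using that $S_{\lambda_1}$ is positive (order-preserving) yields $S_{\lambda_1}^n\mathbf{1}\geq M(\lambda_1)^n\mathbf{1}$; taking $L^1$-norms and applying Gelfand's formula then gives $r(S_{\lambda_1})\geq M(\lambda_1)$. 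This is the spectral-radius counterpart of the characterisation \eqref{car}, and it reduces the whole proposition to the single quantitative claim $M(\lambda_1)>1$ for a suitable $\lambda_1$.

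The heart of the argument is therefore to show that $M(\lambda_1)\to+\infty$ as $\lambda_1\downarrow s(B_{\varepsilon,\rho})$. Since $a\in C^1(\overline{\Omega})$, the function $g$ is Lipschitz, say $g(\bar{x})-g(y)\leq L|y-\bar{x}|$, so with $\delta:=\lambda_1-g(\bar{x})>0$ one has
\[ \integral \frac{1}{\lambda_1-g(y)}\,\D y \geq \integral \frac{1}{\delta+L|y-\bar{x}|}\,\D y. \]
Because $\Omega\subset\R$ is one-dimensional, the right-hand side diverges logarithmically as $\delta\to 0$, by monotone convergence together with the fact that $\int_{\Omega}|y-\bar{x}|^{-1}\,\D y=+\infty$. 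Consequently $M(\lambda_1)\to+\infty$, and choosing $\lambda_1>s(B_{\varepsilon,\rho})$ close enough forces $M(\lambda_1)>1$, hence $r(S_{\lambda_1})>1$.

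The main obstacle is precisely this divergence step, and it is where the hypotheses $\Omega\subset\R$ and $a\in C^1$ enter essentially: the Lipschitz bound handles interior and boundary maximisers $\bar{x}$ uniformly and guarantees that the singularity $(\lambda_1-g)^{-1}$ becomes non-integrable in the limit, whereas in dimension $\geq 2$ the analogous integral $\int|y-\bar{x}|^{-1}\,\D y$ converges and the construction would fail. I expect the remaining points (positivity and boundedness of $S_{\lambda_1}$, the iteration yielding $r(S_{\lambda_1})\geq M(\lambda_1)$, and the elementary estimate of the one-dimensional integral) to be routine.
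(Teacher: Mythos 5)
Your proof is correct, and it rests on the same mechanism as the paper's: test the explicit positive integral operator $K_{\varepsilon}(\lambda_1 Id - B_{\varepsilon,\rho})^{-1}$ against a positive function, bound its kernel below using $\kappa_{\min}>0$, and let the one-dimensional non-integrable singularity of $(\lambda_1 - g(y))^{-1}$ at $\bar{x}$ push the resulting lower bound past $1$ as $\lambda_1 \downarrow s(B_{\varepsilon,\rho})$. Two differences in execution are worth recording. First, the paper uses the indicator $\chi_{B_{\delta}(\bar{x})}$ as test function and appeals implicitly to the positive-operator fact that $Tg>g$ for some $g>0$ implies $r(T)>1$, whereas you use the constant function $\mathbf{1}$ and re-derive the needed inequality $r(S_{\lambda_1})\geq M(\lambda_1)$ from scratch by positivity iteration plus Gelfand's formula; this is more self-contained and equally valid. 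Second, and more substantively, the paper controls the denominator by the Taylor expansion $q(\lambda_1,y)=q(\lambda_1,\bar{x})+o(|y-\bar{x}|)$, which relies on $\frac{\partial}{\partial y}q(s(B_{\varepsilon,\rho}),y)|_{y=\bar{x}}=0$, i.e.\ on $\bar{x}$ being an interior critical point of $a$; your Lipschitz bound $\lambda_1-g(y)\leq \delta+L|y-\bar{x}|$ needs no criticality, and the logarithmic divergence in dimension one still goes through, so your variant is marginally more general: it also covers a boundary maximiser, provided $\int_{\Omega}|y-\bar{x}|^{-1}\,\D y=\infty$, a condition that is automatic for interior $\bar{x}$ but should be stated rather than asserted for a general open $\Omega$ (it can fail for pathological unions of intervals accumulating at $\bar{x}$); note the paper's proof makes the analogous interiority assumption silently. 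One cosmetic caveat: for disconnected $\Omega$ the global Lipschitz estimate on $\overline{\Omega}$ should be localised to a ball around $\bar{x}$, which is all your divergence argument actually needs.
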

\begin{proof}
	Let us define function $q: \R\times \Omega \rightarrow \R$ 		by 
	\[q(\lambda,y) := \lambda - \left( \frac{2a(y)}{1+k\rho} -(1+ 				\varepsilon)\right)p.		\]
	Then, we need to prove that
	\[\exists \lambda_1> s(B_{\varepsilon,\rho}) \;\exists g\in L^1(\Omega), g> 0\;			\forall 			x\in\Omega: \quad \varepsilon \integral 				\kappa(x,y)q(\lambda_1,y)^{-1} 	g(y)\,\D y > g(x).\]
	Observe that $\mathrm{argmin}_{x\in\Omega} q(s(B_{\varepsilon,\rho					}),x) = \mathrm{argmax}_{x\in\Omega} \left( \frac{2a(x)}			{1+k\rho}-(1+ \varepsilon)\right)p = \bar{x}$. \\ Using the definition of $q$, we 			obtain
	\begin{eqnarray}\label{eq:mut:pprop}
		\left\{\begin{array}{ll}
				q(s(B_{\varepsilon,\rho}),\bar{x}) &= 0, \\
				\frac{\partial}{\partial y} q(s(B_{\varepsilon,\rho}),y)_{|								y=\bar{x}} &= 0.
				\end{array}\right. 
	\end{eqnarray}	
	Choosing  $g = \bigchi_{B_{\delta}(\bar{x})}$, for a small $\delta>0$, we estimate
	\begin{eqnarray}\label{ineq:mut:p}
		\varepsilon \integral \kappa(x,y) q(\lambda_1,y)^{-1}g(y)\,\D y 			&=& \varepsilon \int\limits_{B_{\delta}(\bar{x})} \kappa(x,y) 				q(\lambda_1,y)^{-1}\,\D y \nonumber\\
		&\geq& \varepsilon\min\limits_{x,y\in\overline{\Omega}} 					\kappa(x,y)\int\limits_{B_{\delta}											(\bar{x})}q(\lambda_1,y)^{-1}\,\D y.
	\end{eqnarray}	
	Expanding the function $q$ using the Taylor formula up to the $0$th 	order around $\bar{x}$ yields
	\begin{eqnarray}\label{eq:mut:taylor}
		q(\lambda_1,y) = q(\lambda_1,\bar{x}) + 									o\left( \left\| y-\bar{x}\right\| \right).
	\end{eqnarray}
	Inserting equation 			\eqref{eq:mut:taylor} into inequality 			\eqref{ineq:mut:p} leads to
	\begin{eqnarray*}
		\varepsilon\integral \kappa(x,y) q(\lambda_1,y)^{-1} g(y)\,\D y 			&\geq& \varepsilon\min\limits_{x,y\in\overline{\Omega}} 					\kappa(x,y) \int\limits_{B_{\delta}(\bar{x})} \frac{1}						{q(\lambda_1,\bar{x}) + o\left( \left\| y-\bar{x}\right\|					\right)} \,\D y  >1,
	\end{eqnarray*}
	by using the first equation of \eqref{eq:mut:pprop}, choosing 				$\delta$ small enough and $\lambda_1$ close enough to 						$s(B_{\varepsilon,\rho})$. 
\end{proof}
\subsection{Convergence of the eigenvalues and the eigenfunctions}
Now that we have proved existence of a strictly dominant eigenvalue $\lambda_{\varepsilon}(\rho)$ of the operator $C_{\varepsilon,\rho}$
we can formulate a result about its limiting behavior.
\begin{lemma}\label{lem:mut:eigenconv}
	Let $\lambda_{\varepsilon}(\rho)$ be the strictly dominant 					eigenvalue of the operator $C_{\varepsilon,\rho}$ defined in \eqref{operators}, then 
	\[\lambda_{\varepsilon}(\rho) \stackrel{\varepsilon\rightarrow 0}			{\longrightarrow} \max\limits_{x\in\overline{\Omega}} \left( 				\frac{2a(x)}{1+k\rho}-1\right)p.\]
\end{lemma}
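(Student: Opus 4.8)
The plan is to exploit the decomposition $C_{\varepsilon,\rho}=B_{\varepsilon,\rho}+K_{\varepsilon}$, in which $B_{\varepsilon,\rho}$ is the multiplication operator with symbol $\bigl(\tfrac{2a(x)}{1+k\rho}-(1+\varepsilon)\bigr)p$ and $K_{\varepsilon}$ is an integral operator of size $O(\varepsilon)$, and to squeeze $\lambda_{\varepsilon}(\rho)$ between the spectral bound of $B_{\varepsilon,\rho}$ and that of $C_{\varepsilon,\rho}$. Since $\lambda_{\varepsilon}(\rho)$ is the strictly dominant eigenvalue of $C_{\varepsilon,\rho}$ (established in Proposition \ref{prop:mut:existe2}), it coincides with the spectral bound $s(C_{\varepsilon,\rho})$. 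Writing $M(\rho):=\max_{x\in\overline{\Omega}}\bigl(\tfrac{2a(x)}{1+k\rho}-1\bigr)p$, the symbol of $B_{\varepsilon,\rho}$ is continuous on the compact set $\overline{\Omega}$, so its spectral bound is computed explicitly as $s(B_{\varepsilon,\rho})=M(\rho)-\varepsilon p$. The lemma then reduces to bounding $s(C_{\varepsilon,\rho})$ above and below by quantities both tending to $M(\rho)$.

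For the lower bound I would use positivity: since $K_{\varepsilon}\geq 0$, adding it to the generator $B_{\varepsilon,\rho}$ cannot decrease the spectral bound, so $\lambda_{\varepsilon}(\rho)=s(C_{\varepsilon,\rho})\geq s(B_{\varepsilon,\rho})=M(\rho)-\varepsilon p$; alternatively this is immediate from Proposition \ref{prop:mut:existe2}, which already gives $\lambda_{\varepsilon}(\rho)>s(B_{\varepsilon,\rho})$. For the upper bound I would invoke the standard bounded-perturbation estimate for the growth bound, $\omega(C_{\varepsilon,\rho})\leq\omega(B_{\varepsilon,\rho})+\|K_{\varepsilon}\|_{\infty}$, together with $s(C_{\varepsilon,\rho})\leq\omega(C_{\varepsilon,\rho})$ and the coincidence $s(B_{\varepsilon,\rho})=\omega(B_{\varepsilon,\rho})$ valid for positive semigroups on $L^{1}(\Omega)$ (the property cited in the text via \cite{weis}). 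It then remains to estimate the operator norm of $K_{\varepsilon}$: using Tonelli's theorem and the normalisation $\int_{\Omega}\kappa(x,y)\,\D x=1$ from Assumption \ref{ass:mut:2}, one gets $\|K_{\varepsilon}u\|_{L^{1}}\leq\varepsilon p\int_{\Omega}\bigl(\int_{\Omega}\kappa(x,y)\,\D x\bigr)|u(y)|\,\D y=\varepsilon p\,\|u\|_{L^{1}}$, hence $\|K_{\varepsilon}\|_{\infty}\leq\varepsilon p$. Combining these, $s(C_{\varepsilon,\rho})\leq s(B_{\varepsilon,\rho})+\varepsilon p=M(\rho)$.

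Putting the two bounds together yields the sandwich
\[
M(\rho)-\varepsilon p\ \leq\ \lambda_{\varepsilon}(\rho)\ \leq\ M(\rho),
\]
so letting $\varepsilon\to 0$ gives $\lambda_{\varepsilon}(\rho)\to M(\rho)$ for each fixed $\rho$, which is the assertion. The only delicate point is the upper bound: one must be certain that $\lambda_{\varepsilon}(\rho)$ is genuinely the spectral bound of $C_{\varepsilon,\rho}$ (guaranteed by strict dominance) and that the perturbation inequality may be applied in the right order, i.e. that the growth and spectral bounds coincide on $L^{1}$ for the positive semigroups involved, so that $s(C_{\varepsilon,\rho})\leq s(B_{\varepsilon,\rho})+\|K_{\varepsilon}\|_{\infty}$ is legitimate. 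The lower bound and the norm estimate on $K_{\varepsilon}$ are routine, and I expect no further obstacle, since the $O(\varepsilon)$ size of the perturbation makes the squeeze immediate once these semigroup-theoretic facts are in place.
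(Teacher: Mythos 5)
Your proof is correct, but it takes a genuinely different route from the paper's. Both arguments start from the splitting $C_{\varepsilon,\rho}=B_{\varepsilon,\rho}+K_{\varepsilon}$ of \eqref{operators}, but the paper never invokes perturbation theory for growth bounds: it applies the spectral-bound characterization \eqref{car} twice with ad hoc positive test functions. For the upper bound it restricts the positive eigenfunction $\varphi_{\varepsilon,\rho}$ to a set $\Omega'$ of positive measure on which $\int_{\Omega}\kappa(x,y)\varphi_{\varepsilon,\rho}(y)\,\D y\leq\varphi_{\varepsilon,\rho}(x)$ (such a set exists by integrating in $x$ and using $\int_{\Omega}\kappa(x,y)\,\D x=1$), which gives pointwise $\lambda_{\varepsilon}(\rho)\leq\left(\tfrac{2a(x)}{1+k\rho}-1\right)p\leq M(\rho)$ on $\Omega'$; for the lower bound it takes smooth nonnegative bumps supported near $\bar{x}$ and the positivity of $K_{\varepsilon}$ to get $\lambda_{\varepsilon}(\rho)\geq M(\rho)-\varepsilon p-\delta$ for any $\delta>0$. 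You replace both steps by standard semigroup facts: the explicit value $s(B_{\varepsilon,\rho})=M(\rho)-\varepsilon p$ of a multiplication operator, monotonicity of the spectral bound under the positive perturbation $K_{\varepsilon}$ (equivalently, the strict dominance $\lambda_{\varepsilon}(\rho)>s(B_{\varepsilon,\rho})$ already implicit in Proposition \ref{prop:mut:existe2} combined with \cite{burgerpert}) for the lower bound, and the bounded perturbation theorem together with the exact norm $\left\|K_{\varepsilon}\right\|_{\infty}=\varepsilon p$ for the upper bound. On this last point you are slightly more cautious than necessary: since the multiplication semigroup is quasi-contractive on $L^{1}(\Omega)$, the identity $\omega(B_{\varepsilon,\rho})=s(B_{\varepsilon,\rho})$ is explicit and $s(C_{\varepsilon,\rho})\leq\omega(C_{\varepsilon,\rho})$ always holds, so Weis's theorem \cite{weis} is not really needed for this direction. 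Notably, the normalization of $\kappa$ from Assumption \ref{ass:mut:2} enters both proofs exactly at the upper bound (via the existence of $\Omega'$ in the paper, via the norm computation in yours). What your route buys is a quantitative, $\rho$-uniform sandwich $M(\rho)-\varepsilon p\leq\lambda_{\varepsilon}(\rho)\leq M(\rho)$, i.e.\ convergence at the explicit rate $\varepsilon p$, which is stronger than the paper's pointwise limit and would, for instance, streamline the proof of Proposition \ref{prop:mut:rhoconv}; what the paper's route buys is self-containedness, re-using only the tool \eqref{car} already set up for Proposition \ref{prop:mut:existe2} and avoiding any appeal to perturbation theorems.
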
	
\begin{proof}
	For notational simplicity, we denote $\mu(x) := 							\left( \frac{2a(x)}{1+k\rho} -1\right)p,\\
	 \eps{\mu}(x) := \left( 			\frac{2a(x)}{1+k\rho} -(1+\varepsilon)\right)p$. We want to show that for 			all $\delta>0$ there exists 					$\varepsilon_0$ such that for all 											$\varepsilon<\varepsilon_0$ it holds 
	\[\eps{\lambda}(\rho) \in B_{\delta}(\mu(\bar{x})),\]
	where recall that $\bar{x}$ denotes the point where the maximal value of the self-renewal function is attained.\\
	We begin by proving that $\lambda_{\varepsilon}(\rho)\leq \mu(\bar{x})$. \\
	The assumptions on $\kappa$ imply that 
	\begin{eqnarray}
		\forall u\in L^1(\Omega), \; u\geq 0  \; \exists 								\Omega'\subset\Omega \,\forall x\in\Omega': 								\quad 	\integral \kappa(x,y) u(y) \,\D y\leq 								u(x).\label{positmeas}
	\end{eqnarray}
	Assuming otherwise, 
	\begin{equation*}
	 \int_{\Omega}\kappa(x,y) u(y) \,\D y -u(x)
> 0,
\end{equation*}
for almost all $x$, and
integrating the left hand side with respect to $x$, using
$\int_{\Omega} \kappa(x,y)\mbox{d}x = 1$, we obtain 
$\int_{\Omega} \int_{\Omega} \kappa(x,y)u(y) \mbox{d}y
-u(x) \mbox{d} x = 0$, what implies a contradiction.

Taking $f_1(x) := \bigchi_{\Omega'}\varphi_{\varepsilon,\rho}(x)$ where $\varphi_{\varepsilon,\rho}$ is the positive eigenfunction corresponding to the eigenvalue $\lambda_{\varepsilon}(\rho)$ 	 and $\Omega'$ being a set of positive measure such that \eqref{positmeas} holds for $\varphi_{\varepsilon,\rho}$,
	\[\begin{array}{rcl}\lambda_{\varepsilon}(\rho)f_{1}(x)& =& \left(\frac{2a(x)}{1+k\rho}-1\right)pf_{1}(x)-\varepsilon pf_{1}(x)+\varepsilon p \int_{\Omega'}\kappa(x,y)f_{1}(y)\mbox{d}y\\ \\ & \leq & \left(\frac{2a(x)}{1+k\rho}-1\right)pf_{1}(x).
	\end{array}\]
	Hence, by \eqref{car} we conclude that $\lambda_{\varepsilon}(\rho)\leq \mu(x) \leq \mu(\bar{x})$.
	
	We show now that for all $\delta >0$, there exists $\varepsilon_{0}$ such that it holds
	$$
	\lambda_{\varepsilon}(\rho)\geq \mu(\bar{x})-\delta
	$$
	for any $\varepsilon < \varepsilon_{0}$.
	
	Let $\delta$ be such that $\mu_{\varepsilon}(x)\geq \mu_{\varepsilon}(\bar{x})-\delta$ for $x\in\Omega'$, where $\Omega'\subset \Omega$ is a suitably chosen 		set. Let us consider a smooth function $u$ with $\supp{u} \subset \Omega'$. Then,  	due to 			the positivity of $\eps{K}$, it holds
	\[C_{\varepsilon,\rho} u = B_{\varepsilon,\rho} 							u + K_{\varepsilon} u \geq \eps{\mu}(x)u 			\geq (\eps{\mu}(\bar{x}) - \delta)u.\]
	By inequality \eqref{car}, we obtain 				$\lambda_{\varepsilon}(\rho) \geq \eps{\mu}(\bar{x})-\delta$. Furthermore, we know
	\[\left| \mu(x) - \eps{\mu}(x) \right| \leq \varepsilon\leq \delta.			\]
	Thus, we conclude
	\[\forall \delta> 0\;\exists\; \varepsilon_0> 0 \;\forall 					\varepsilon<\varepsilon_0: \quad 											\lambda_{\varepsilon}(\rho) \in B_{2\delta}(\mu(\bar{x})).\]
\end{proof}

\begin{prop}\label{prop:mut:eifunconv}
	Let $\varphi_{\varepsilon,\rho}$ be the unique positive 				eigenfunction corresponding to the eigenvalue $\lambda_{\varepsilon}(\rho)$ of the operator $C_{\varepsilon,			\rho}$ defined in \eqref{operators}. Then 
	\[\varphi_{\varepsilon,\rho} \rightharpoonup^* 								\delta_{\bar{x}}\quad\mbox{in $\mathcal{M}^+(\Omega)$},\quad  \mbox{as}\; \varepsilon\rightarrow 0,\]
	where $\bar{x}$ is the unique value, where the maximum of the self-renewal function $a(x)$  is attained.
\end{prop}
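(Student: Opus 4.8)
The plan is to exploit the pointwise representation of the eigenfunction obtained by rearranging the eigenvalue equation. Writing $\mu_{\varepsilon}(x) = \left(\frac{2a(x)}{1+k\rho}-(1+\varepsilon)\right)p$ and $\mu(x) = \left(\frac{2a(x)}{1+k\rho}-1\right)p$, the identity $C_{\varepsilon,\rho}\varphi_{\varepsilon,\rho} = \lambda_{\varepsilon}(\rho)\varphi_{\varepsilon,\rho}$ can be solved for $\varphi_{\varepsilon,\rho}$ to give
\[
\varphi_{\varepsilon,\rho}(x) = \frac{\varepsilon p \integral \kappa(x,y)\varphi_{\varepsilon,\rho}(y)\,\D y}{\lambda_{\varepsilon}(\rho)-\mu_{\varepsilon}(x)}.
\]
Since $\varphi_{\varepsilon,\rho}>0$ and $\kappa>0$, the numerator is strictly positive, which forces the denominator to be positive for every $x$; in particular $\lambda_{\varepsilon}(\rho)>\mu_{\varepsilon}(x)$ pointwise. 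I normalise $\varphi_{\varepsilon,\rho}$ by $\|\varphi_{\varepsilon,\rho}\|_{L^1(\Omega)}=1$, so that it defines a probability measure, which is the natural setting for weak-$*$ convergence to the Dirac mass $\delta_{\bar x}$.

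The core of the argument is a concentration estimate: for every $\eta>0$ the mass of $\varphi_{\varepsilon,\rho}$ outside $B_{\eta}(\bar x)$ vanishes as $\varepsilon\to 0$. Since $\bar x$ is the \emph{unique} maximizer of $a$, hence of $\mu$, on the compact $\overline{\Omega}$ (Assumption \ref{ass:mut:2}), there is $c_{\eta}>0$ with $\mu(\bar x)-\mu(x)\geq c_{\eta}$ for all $x\in\Omega\setminus B_{\eta}(\bar x)$. By Lemma \ref{lem:mut:eigenconv}, $\lambda_{\varepsilon}(\rho)\to\mu(\bar x)$, so for $\varepsilon$ small $\lambda_{\varepsilon}(\rho)\geq \mu(\bar x)-c_{\eta}/2$; using $\mu_{\varepsilon}(x)=\mu(x)-\varepsilon p$ the denominator then satisfies $\lambda_{\varepsilon}(\rho)-\mu_{\varepsilon}(x)\geq c_{\eta}/2$ on $\Omega\setminus B_{\eta}(\bar x)$. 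Bounding the numerator by $\varepsilon p\,\|\kappa\|_{\infty}\,\|\varphi_{\varepsilon,\rho}\|_{L^1(\Omega)}=\varepsilon p\,\|\kappa\|_{\infty}$ and integrating yields
\[
\int\limits_{\Omega\setminus B_{\eta}(\bar x)}\varphi_{\varepsilon,\rho}(x)\,\D x \;\leq\; \frac{2\varepsilon p\,\|\kappa\|_{\infty}\,|\Omega|}{c_{\eta}} \;\xrightarrow{\varepsilon\to 0}\; 0.
\]
Because the total mass is fixed at $1$, the mass inside $B_{\eta}(\bar x)$ tends to $1$.

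With the concentration estimate in hand, weak-$*$ convergence follows in the standard way. For a continuous bounded test function $f$, I split $\integral f\varphi_{\varepsilon,\rho}$ over $B_{\eta}(\bar x)$ and its complement: the complementary integral is bounded by $\|f\|_{\infty}$ times the vanishing outside mass, while on $B_{\eta}(\bar x)$ the continuity of $f$ at $\bar x$ together with the inside mass tending to $1$ gives convergence to $f(\bar x)$. Letting $\eta\to 0$ shows $\integral f\varphi_{\varepsilon,\rho}\,\D x\to f(\bar x)=\integral f\,\D\delta_{\bar x}$.

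I expect the main subtlety to be that the representation provides no usable pointwise bound near $\bar x$: there the denominator $\lambda_{\varepsilon}(\rho)-\mu_{\varepsilon}(x)$ degenerates (it is squeezed into $(\mu(\bar x)-\varepsilon p,\,\mu(\bar x)]$ by the positivity constraint and Lemma \ref{lem:mut:eigenconv}), and $\varphi_{\varepsilon,\rho}$ indeed blows up there as it concentrates. The resolution, and the crux of the proof, is that no near-$\bar x$ control is required: the fixed normalisation $\|\varphi_{\varepsilon,\rho}\|_{L^1(\Omega)}=1$ converts the away-from-$\bar x$ decay into concentration at $\bar x$ automatically. The only other point needing care is the uniform lower bound $c_{\eta}$ on $\mu(\bar x)-\mu(x)$, which rests squarely on the uniqueness of the maximizer $\bar x$ postulated in Assumption \ref{ass:mut:2}.
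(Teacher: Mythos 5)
Your proposal is correct and takes essentially the same approach as the paper: both normalise $\left\|\varphi_{\varepsilon,\rho}\right\|_{L^1(\Omega)}=1$, combine Lemma \ref{lem:mut:eigenconv} with the uniqueness of the maximiser $\bar{x}$ to get a uniform gap $\lambda_{\varepsilon}(\rho)-\mu_{\varepsilon}(x)\geq c_{\eta}/2$ away from $\bar{x}$, and use the factor $\varepsilon$ in the mutation term to show the mass outside any neighbourhood of $\bar{x}$ is $O(\varepsilon)$, hence a Dirac sequence. The only (cosmetic) difference is that you divide the eigenvalue equation pointwise and then integrate the resulting bound, while the paper integrates the eigenvalue equation over the set $\Omega^c$ first and then rearranges.
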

\begin{proof}
	To prove convergence of the steady state, we make an ansatz that the eigenfunctions form a Dirac sequence.\\
	By Proposition \ref{prop:mut:existe2} and \cite[Theorem $2.2$]{burgerpert}, the eigenfunction 					$\varphi_{\varepsilon,\rho}$ of $C_{\varepsilon,\rho}$ is strictly 			positive, $\varphi_{\varepsilon,\rho} >0$ for all 					$\varepsilon>0$.  As an eigenfunction in 					$L^1(\Omega)$, it can be normalized to $\left\|\varphi_{\varepsilon,			\rho} \right\|_{L^1(\Omega)} = 1$. It remains to show that 
	\[\int\limits_{\Omega^c} \varphi_{\varepsilon,\rho}(x)\,\D x 				\stackrel{\varepsilon\rightarrow 0}{\longrightarrow} 0\]
	for 				$\Omega^c\subset \Omega$ with $\bar{x}\notin \Omega^c$ 			and 		$\mathrm{dist}(\bar{x},\Omega^c)>0$.
	
	According to Lemma \ref{lem:mut:eigenconv}, it holds							$\lambda_{\varepsilon}(\rho) \rightarrow \max_{x\in\Omega} \left( 			\frac{2a(x)}{1+k\rho}-1\right)p$ for $\varepsilon\rightarrow 0$. 			Hence, for any $\Omega^c$ as defined	above, it is possible to choose 			$\varepsilon<\varepsilon_0$ such that
	\begin{eqnarray}\label{ineq:mut:max}
		\forall x\in\Omega^c: \quad \lambda_{\varepsilon}(\rho) > \left( 		\frac{2a(x)}{1+k\rho}-1\right)p.
	\end{eqnarray}
	Integrating the eigenvalue problem for $C_{\varepsilon,\rho}$, 	we obtain
	$$
	\begin{array}{rcl}
		0 &=& \int\limits_{\Omega^c} \left( \frac{2a(x)}{1+k\rho} - 				(1+ \varepsilon)\right)p \varphi_{\varepsilon,\rho}				(x) - \lambda_{\varepsilon}(\rho) \varphi_{\varepsilon,\rho}				(x)\,\D x \\ \\ & &  + 																\varepsilon p\int\limits_{\Omega^c} \integral \kappa(x,y) 					\varphi_{\varepsilon,\rho}(y)\,\D y\,\D x \\
		&\leq& \left( \max\limits_{x\in\Omega^c}\left(\frac{2a(x)}					{1+k\rho}-1\right)p - \lambda_{\varepsilon}									(\rho)\right)\int\limits_{\Omega^c}\varphi_{\varepsilon,\rho}				(x)\,\D x +\\ \\ & &	\varepsilon p\int\limits_{\Omega^c}\integral 						\kappa(x,y)\varphi_{\varepsilon,\rho}(y)\,\D y\,\D x.
	\end{array}
	$$
	By inequality \eqref{ineq:mut:max}, the first term is negative and 			bounded, hence by rearranging the inequality we obtain, for a constant 			$C>0$,
	\[C\int\limits_{\Omega^c} \varphi_{\varepsilon,\rho}(x)\,\D x \leq 			\varepsilon\int\limits_{\Omega^c}\integral 									\kappa(x,y)\varphi_{\varepsilon,\rho}(y)\,\D y\,\D x \leq 					\varepsilon. \]
Consequently, $\varphi_{\varepsilon,\rho}$ is a Dirac sequence and 					converges subsequently to a Dirac measure concentrated in 				$\bar{x}$. 
\end{proof}

\subsection{Convergence of the steady states}
Now we prove weak$^*$ convergence of the steady states $(\rho_{\varepsilon}, u_{\varepsilon})$ for $\varepsilon\rightarrow 0$.  As a first step, we show 

\begin{prop}\label{prop:mut:rhoconv}
	Let $\eps{\rho}$ be the unique zero of $\eps{\lambda}(\rho)$, hence 		the first component of the steady state of system \eqref{eq:mut:sys2} and let $\bar{\rho_{0}} := \frac{2\bar{a}-1}{k}$. Then
	\[\eps{\rho} \stackrel{\varepsilon\rightarrow 0}{\longrightarrow} 			\bar{\rho}_0.\]
\end{prop}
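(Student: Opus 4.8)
The plan is to promote the pointwise convergence $\lambda_{\varepsilon}(\rho) \to \lambda_0(\rho)$ from Lemma \ref{lem:mut:eigenconv} to convergence of the \emph{zeros} $\rho_{\varepsilon}$, using that both $\lambda_{\varepsilon}(\cdot)$ and the limit function are monotone. First I would identify the limit explicitly: since for fixed $\rho>0$ the map $x\mapsto \frac{2a(x)}{1+k\rho}$ attains its maximum exactly where $a$ does, Lemma \ref{lem:mut:eigenconv} gives that $\lambda_{\varepsilon}(\rho)$ converges to
\[
\lambda_0(\rho):=\left(\frac{2\bar{a}}{1+k\rho}-1\right)p .
\]
The function $\lambda_0$ is continuous and strictly decreasing on $(0,\infty)$ (because $\rho\mapsto\frac{2\bar{a}}{1+k\rho}$ is), and its unique zero is obtained from $1+k\rho=2\bar{a}$, namely $\rho=\frac{2\bar{a}-1}{k}=\bar{\rho}_0$; note $\bar{\rho}_0>0$ because $\bar{a}>\tfrac12$ by Assumption \ref{ass:mut:2}. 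Thus $\bar{\rho}_0$ is precisely the zero of the limiting function, which is the candidate limit of $\rho_{\varepsilon}$.

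For the upper bound $\limsup_{\varepsilon\to 0}\rho_{\varepsilon}\le\bar{\rho}_0$, I would fix $\delta>0$ and evaluate at $\bar{\rho}_0+\delta$. This point lies in the admissible range since $\bar{\rho}_0+\delta>\bar{\rho}_0>\bar{\rho}_{\varepsilon}$, and $\lambda_0(\bar{\rho}_0+\delta)<0$ by strict monotonicity of $\lambda_0$. Pointwise convergence then yields $\lambda_{\varepsilon}(\bar{\rho}_0+\delta)<0$ for all sufficiently small $\varepsilon$. Because $\rho\mapsto\lambda_{\varepsilon}(\rho)$ is itself strictly decreasing — this follows either from Lemma \ref{lem:mut:specmon} via the equivalence $\lambda_{\varepsilon}(\rho)=0\Leftrightarrow r(T_{\varepsilon,\rho})=1$, or directly from the fact that $C_{\varepsilon,\rho}$ decreases with $\rho$ (only the multiplication part $B_{\varepsilon,\rho}$ depends on $\rho$, and $\rho\mapsto\frac{1}{1+k\rho}$ is decreasing) together with monotonicity of the dominant eigenvalue — and vanishes at $\rho_{\varepsilon}$, the sign condition $\lambda_{\varepsilon}(\bar{\rho}_0+\delta)<0$ forces $\rho_{\varepsilon}<\bar{\rho}_0+\delta$. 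Letting $\varepsilon\to 0$ and then $\delta\to 0$ gives $\limsup_{\varepsilon\to 0}\rho_{\varepsilon}\le\bar{\rho}_0$.

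The main obstacle is that the matching lower bound cannot be obtained symmetrically: for a fixed $\delta$ the point $\bar{\rho}_0-\delta$ eventually drops below $\bar{\rho}_{\varepsilon}$, since $\bar{\rho}_{\varepsilon}=\frac{1}{k}\!\left(\frac{2\bar{a}}{1+\varepsilon}-1\right)\uparrow\bar{\rho}_0$ as $\varepsilon\to 0$, so $\lambda_{\varepsilon}$ is no longer positive (nor within the Krein--Rutman framework) there and the sign argument collapses. Instead I would exploit the built-in constraint $\rho_{\varepsilon}>\bar{\rho}_{\varepsilon}$ coming from Lemma \ref{lem:mut:unique}: taking the $\liminf$ and using $\bar{\rho}_{\varepsilon}\to\bar{\rho}_0$ gives
\[
\liminf_{\varepsilon\to 0}\rho_{\varepsilon}\ \ge\ \lim_{\varepsilon\to 0}\bar{\rho}_{\varepsilon}\ =\ \bar{\rho}_0 .
\]
Combining this with the upper bound from the previous paragraph yields $\rho_{\varepsilon}\to\bar{\rho}_0$, which is the assertion. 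I expect the only delicate point to be justifying the strict monotonicity of $\lambda_{\varepsilon}(\cdot)$ and recognizing that the lower bound is automatic from the domain endpoint $\bar{\rho}_{\varepsilon}$ rather than from a sign argument.
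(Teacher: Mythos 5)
Your proof is correct and follows the same basic strategy as the paper --- promote the pointwise convergence of Lemma \ref{lem:mut:eigenconv} to convergence of the zeros via monotonicity --- but your treatment of the lower bound genuinely differs from, and is more careful than, the paper's. The paper's proof is two sentences: it observes that $\lambda_0(\rho)=\left(\frac{2\bar{a}}{1+k\rho}-1\right)p$ is decreasing, changes sign (since $a(x_*)>\frac12$), and has $\bar{\rho}_0$ as its unique zero, and then invokes Lemma \ref{lem:mut:eigenconv}; implicitly this is the symmetric sign-trapping argument at $\bar{\rho}_0\pm\delta$. You correctly identify the subtlety this glosses over: for fixed $\delta$ the left test point $\bar{\rho}_0-\delta$ eventually lies below $\bar{\rho}_{\varepsilon}$, where $T_{\varepsilon,\rho}$ is no longer positive and Lemma \ref{lem:mut:unique} (uniqueness of the zero) does not directly apply; to repair the symmetric version one would have to add, e.g., that $\lambda_{\varepsilon}(\rho)\geq s(B_{\varepsilon,\rho})\geq 0$ for $\rho\leq\bar{\rho}_{\varepsilon}$, so that no zeros can hide in that range. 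Your replacement of the left-hand sign argument by the domain constraint $\rho_{\varepsilon}>\bar{\rho}_{\varepsilon}$ together with $\bar{\rho}_{\varepsilon}\uparrow\bar{\rho}_0$ (immediate from \eqref{def:ro}) is clean, uses only facts already established, and makes the proof self-contained where the paper's is elliptic. One caveat on your upper bound: monotonicity of $\lambda_{\varepsilon}(\cdot)$ does \emph{not} follow from the equivalence $\lambda_{\varepsilon}(\rho)=0\Leftrightarrow r(T_{\varepsilon,\rho})=1$ alone, since that equivalence as stated in the paper relates only zeros, not signs; your alternative justification is the right one --- only the multiplication part $B_{\varepsilon,\rho}$ depends on $\rho$, it decreases pointwise in $\rho$, and the spectral bound of a generator of a positive semigroup on $L^1$ is monotone under positive bounded perturbations --- and note that weak monotonicity already suffices, since $\rho_{\varepsilon}\geq\bar{\rho}_0+\delta$ would give $0=\lambda_{\varepsilon}(\rho_{\varepsilon})\leq\lambda_{\varepsilon}(\bar{\rho}_0+\delta)<0$, a contradiction.
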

\begin{proof}
	
Notice that $\bar{\rho_0}$ is the unique zero of the decreasing function $\lambda_{0}(\rho):=\max\limits_{x\in\overline{\Omega}} \left( \frac{2a(x)}{1+k\rho}-1\right)p$. Lemma \ref{lem:mut:eigenconv} and the fact that $\lambda_{0}(\rho)$ changes sign (recall that, by Assumption \ref{ass:mut:2} there exists $x_{*}\in \Omega$ such that 					  $a(x_{*})>\frac12$) prove the statement.

\end{proof}
The next result provides convergence of the family of steady states of system \eqref{eq:mut:sys2}.
\begin{theorem}\label{thm:mut:conv}
	Let $(\eps{\rho}, \eps{u})$ be the family of stationary solutions of System				\eqref{eq:mut:sys2}. Then 
	\begin{align*}
		\eps{u} \rightharpoonup^* 													\bar{\rho}_1\delta_{\bar{x}}\quad\mbox{in 									$\mathcal{M}^+(\Omega)$},\quad \eps{\rho} \rightarrow 						\bar{\rho}_0\quad\mbox{in $\R$},
	\end{align*}
	where $\bar{\rho}_0 = \frac{2\bar{a}-1}{k}$, 							  $\bar{\rho}_1 = \frac{d}{p}\frac{2\bar{a}-1}{k}$ and $\bar{x}$ is the isolated point where the maximum of the self-renewal function $a(x)$  is attained $\bar{x}=\arg  \max\limits_{x\in \Omega}a(x)$.
\end{theorem}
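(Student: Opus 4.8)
The plan is to assemble the pieces already established: the scalar limit $\rho_\varepsilon \to \bar{\rho}_0$ from Proposition \ref{prop:mut:rhoconv}, the weak-* concentration of the normalized eigenfunctions, and the explicit representation $u_\varepsilon = c_\varepsilon\,\varphi_{\varepsilon,\rho_\varepsilon}$ of the second component of the steady state, and then to pin down the limiting mass $\bar{\rho}_1$ by passing to the limit in the formula for $c_\varepsilon$. The convergence $\rho_\varepsilon \to \bar{\rho}_0$ is exactly Proposition \ref{prop:mut:rhoconv}, so the second assertion is immediate; the work lies in the measure-valued statement.

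First I would upgrade the concentration statement to the steady-state parameter. Proposition \ref{prop:mut:eifunconv} gives $\varphi_{\varepsilon,\rho}\rightharpoonup^*\delta_{\bar{x}}$ for a \emph{fixed} $\rho$, whereas here the parameter is the $\varepsilon$-dependent zero $\rho_\varepsilon$. I would rerun the Dirac-sequence argument of that proposition with $\rho=\rho_\varepsilon$. The only input needed is that, for every $\Omega^c\subset\Omega$ with $\bar{x}\notin\Omega^c$ and $\mathrm{dist}(\bar{x},\Omega^c)>0$, one has $\lambda_\varepsilon(\rho_\varepsilon)>\left(\frac{2a(x)}{1+k\rho_\varepsilon}-1\right)p$ for all $x\in\Omega^c$ and all small $\varepsilon$. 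Since $\lambda_\varepsilon(\rho_\varepsilon)=0$ by the defining property of $\rho_\varepsilon$, and since $1+k\bar{\rho}_0=2\bar{a}$ forces $\left(\frac{2a(x)}{1+k\bar{\rho}_0}-1\right)p<0$ for $x\neq\bar{x}$, continuity of the fitness in $\rho$ together with $\rho_\varepsilon\to\bar{\rho}_0$ yields the inequality uniformly on $\Omega^c$. The resulting estimate $C\int_{\Omega^c}\varphi_{\varepsilon,\rho_\varepsilon}\,\D x\leq\varepsilon$ then gives the concentration, and the normalization $\|\varphi_{\varepsilon,\rho_\varepsilon}\|_{L^1(\Omega)}=1$ fixes the mass of the limit to be one, so $\varphi_{\varepsilon,\rho_\varepsilon}\rightharpoonup^*\delta_{\bar{x}}$.

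Since $u_\varepsilon=c_\varepsilon\varphi_{\varepsilon,\rho_\varepsilon}$ with $\|\varphi_{\varepsilon,\rho_\varepsilon}\|_{L^1(\Omega)}=1$, the total mass of $u_\varepsilon$ is exactly $c_\varepsilon$, so it remains to compute $\lim_{\varepsilon\to 0}c_\varepsilon$. Writing $g_\varepsilon(x):=2\left(1-\frac{a(x)}{1+k\rho_\varepsilon}\right)p$ and $g_0(x):=2\left(1-\frac{a(x)}{1+k\bar{\rho}_0}\right)p$, I would split the denominator of $c_\varepsilon$ as
\[
\integral g_\varepsilon\,\varphi_{\varepsilon,\rho_\varepsilon}\,\D x=\integral (g_\varepsilon-g_0)\,\varphi_{\varepsilon,\rho_\varepsilon}\,\D x+\integral g_0\,\varphi_{\varepsilon,\rho_\varepsilon}\,\D x.
\]
The first term is bounded by $\|g_\varepsilon-g_0\|_\infty\to 0$ (uniform convergence, using continuity of $a$ and $\rho_\varepsilon\to\bar{\rho}_0$), while the second converges to $g_0(\bar{x})$ by the concentration just established. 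Using $1+k\bar{\rho}_0=2\bar{a}$ gives $g_0(\bar{x})=2\left(1-\tfrac12\right)p=p$, hence the denominator tends to $p$ and $c_\varepsilon\to\frac{d\bar{\rho}_0}{p}=\frac{d}{p}\frac{2\bar{a}-1}{k}=\bar{\rho}_1$. Combining with the concentration yields $u_\varepsilon\rightharpoonup^*\bar{\rho}_1\delta_{\bar{x}}$.

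The main obstacle is the interplay of two moving objects: upgrading the fixed-$\rho$ result of Proposition \ref{prop:mut:eifunconv} to the varying parameter $\rho_\varepsilon$, and, closely related, justifying the limit of the integral defining $c_\varepsilon$, where the varying test function $g_\varepsilon$ is paired against the varying measures $\varphi_{\varepsilon,\rho_\varepsilon}$. Both are controlled by the uniform $L^1$-mass bound $\|\varphi_{\varepsilon,\rho_\varepsilon}\|_{L^1(\Omega)}=1$ together with the uniform convergence $g_\varepsilon\to g_0$; this is precisely the place where one must not invoke weak-* convergence against a moving test function without first estimating the difference $g_\varepsilon-g_0$.
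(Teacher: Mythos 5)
Your proposal is correct and follows essentially the same route as the paper: convergence of $\rho_{\varepsilon}$ from Proposition \ref{prop:mut:rhoconv}, the Dirac-sequence argument of Proposition \ref{prop:mut:eifunconv} rerun at the moving parameter $\rho_{\varepsilon}$, and passage to the limit in the formula for $c_{\varepsilon}$. Where the paper's proof is terse (``the same argument as in Proposition \ref{prop:mut:eifunconv}'' and ``it follows''), you supply precisely the needed details: the key inequality now comes from $\lambda_{\varepsilon}(\rho_{\varepsilon})=0$ together with the uniform negativity of the limiting fitness away from $\bar{x}$, and the limit of $c_{\varepsilon}$ is justified by splitting $g_{\varepsilon}=(g_{\varepsilon}-g_{0})+g_{0}$ against the normalized eigenfunctions rather than testing a weak-$*$ limit with a moving function.
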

\begin{proof}
	Convergence of $\eps{\rho}$ has already been proven in Proposition 			\ref{prop:mut:rhoconv}. \\
	Convergence of $\eps{u}$ is done in two steps. By construction 			$\eps{u} = \eps{c}\varphi_{\varepsilon,\eps{\rho}}$,
	where $\varphi_{\varepsilon,\eps{\rho}}$ is the unique (normalized) eigenfunction corresponding to the zero eigenvalue of the operator $C_{\varepsilon,			\rho_{\varepsilon}}$ defined in \eqref{operators} and $c_{\varepsilon} = \frac{d\rho_{\varepsilon}}{\integral 2\left( 1 - \frac{a(x)}{1+k\rho_{\varepsilon}}\right)p \varphi_{\varepsilon,\rho_{\varepsilon}}(x)\,\D x}.$  
	\\Thus, we want to prove that both, the constants $\eps{c}$ and the eigenfunctions $\varphi_{\varepsilon,\eps{\rho}}$, converge.

	 The same 				argument as in Proposition 			\ref{prop:mut:eifunconv} yields convergence of 	$\varphi_{\varepsilon,\eps{\rho}}$ to the 				Dirac delta located at 	$\bar{x}$. Because of this property of 				$\varphi_{\varepsilon,\eps{\rho}}$ and convergence of 					$\eps{\rho}$, it follows
	\[\eps{c} = \frac{d\eps{\rho}}{\integral 2\left( 1 - \frac{a(x)}			{1+k\eps{\rho}}\right)p \varphi_{\varepsilon,\eps{\rho}}\,\D x} 			\stackrel{\varepsilon\rightarrow 0}{\longrightarrow} 						\frac{d\bar{\rho_0}}{2p\left( 1- \frac{\bar{a}}{1+k\bar{\rho_0}}\right)} 		= \bar{\rho}_1.\]
	 This 				concludes the proof. 
\end{proof}

We can finally formulate the result giving the behavior for small mutation rate of the equilibria of System \eqref{eq:mut:sys}.
\begin{theorem}
Let $(v_{\varepsilon}, u_{\varepsilon})$ be the family of stationary solutions of System \eqref{eq:mut:sys}. Then, for $\varepsilon\rightarrow 0$,
 \begin{align*}
	(\eps{v}, \eps{u})\rightharpoonup^* 													(\bar{\rho}_0\delta_{\bar{x}} , 													\bar{\rho}_1\delta_{\bar{x}})\quad\mbox{in 									$\mathcal{M}^+(\Omega)$},
	\end{align*}
	where $\bar{\rho}_0 = \frac{2\bar{a}-1}{k}$, 							  $\bar{\rho}_1 = \frac{d}{p}\frac{2\bar{a}-1}{k}$ and $\bar{x}$ is the unique value where the maximum of the self-renewal function $a(x)$  is attained.
\end{theorem}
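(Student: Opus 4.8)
The plan is to reduce everything to Theorem \ref{thm:mut:conv}, which already supplies the convergence of the second component $\eps{u} \rightharpoonup^* \bar{\rho}_1\delta_{\bar{x}}$ together with $\eps{\rho}\to\bar{\rho}_0$. The only genuinely new task is to control the first component $\eps{v}$, and the key structural observation is that at equilibrium $\eps{v}$ is an explicit, $\varepsilon$-dependent multiple of $\eps{u}$. Setting the right-hand side of the first equation in \eqref{eq:mut:sys} to zero gives
\[
\eps{v}(x) = \frac{2p}{d}\left(1 - \frac{a(x)}{1+k\eps{\rho}}\right)\eps{u}(x),
\]
so that convergence of $\eps{v}$ ought to follow from convergence of $\eps{u}$, provided one can pass to the limit through the nonlinear prefactor.

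Concretely, I would fix a test function $\phi\in C(\overline{\Omega})$ and write
\[
\integral \phi(x)\eps{v}(x)\,\D x = \integral g_{\varepsilon}(x)\eps{u}(x)\,\D x,
\qquad
g_{\varepsilon}(x) := \frac{2p}{d}\,\phi(x)\left(1 - \frac{a(x)}{1+k\eps{\rho}}\right).
\]
Since $\eps{\rho}\to\bar{\rho}_0$ by Proposition \ref{prop:mut:rhoconv} and $a$ is continuous on the compact set $\overline{\Omega}$, the family $g_{\varepsilon}$ converges \emph{uniformly} to $g_0(x):=\frac{2p}{d}\phi(x)\bigl(1-\frac{a(x)}{1+k\bar{\rho}_0}\bigr)$. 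The main point is then the splitting
\[
\integral g_{\varepsilon}\eps{u}
= \integral (g_{\varepsilon}-g_0)\eps{u} + \integral g_0\,\eps{u}.
\]
The first term is bounded by $\|g_{\varepsilon}-g_0\|_{\infty}\,\|\eps{u}\|_{L^1(\Omega)}$; because $\|\eps{u}\|_{L^1(\Omega)}=\eps{c}\to\bar{\rho}_1$ is bounded (the eigenfunction $\varphi_{\varepsilon,\eps{\rho}}$ being normalized) while $\|g_{\varepsilon}-g_0\|_{\infty}\to 0$, this term vanishes. The second term converges to $\bar{\rho}_1\,g_0(\bar{x})$ by the weak$^*$ convergence $\eps{u}\rightharpoonup^*\bar{\rho}_1\delta_{\bar{x}}$ from Theorem \ref{thm:mut:conv}.

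It remains to identify the constant, which is the place where the precise values $\bar{\rho}_0$ and $\bar{\rho}_1$ conspire. Using $\bar{\rho}_0=\frac{2\bar{a}-1}{k}$, one has $1+k\bar{\rho}_0 = 2\bar{a}$, so the prefactor evaluated at $\bar{x}$ collapses to $\frac{2p}{d}\bigl(1-\frac{\bar{a}}{2\bar{a}}\bigr)=\frac{p}{d}$; combined with $\bar{\rho}_1=\frac{d}{p}\bar{\rho}_0$, this yields $\bar{\rho}_1\,g_0(\bar{x})=\bar{\rho}_0\,\phi(\bar{x})$, i.e. $\eps{v}\rightharpoonup^*\bar{\rho}_0\delta_{\bar{x}}$ in $\M^+(\Omega)$. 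Since the convergence of the second component is exactly Theorem \ref{thm:mut:conv}, the two limits together give the claim. The only subtle step, and the one I would expect to require care, is the interchange of limit and integration when the integrand $g_{\varepsilon}$ itself depends on $\varepsilon$: weak$^*$ convergence applies only to a \emph{fixed} continuous test function, so it is precisely the splitting above, relying on the uniform convergence $g_{\varepsilon}\to g_0$ and the uniform $L^1$-bound on $\eps{u}$, that makes the argument rigorous.
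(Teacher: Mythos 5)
Your proposal is correct and takes essentially the same route as the paper: both solve the first equilibrium equation to write $v_{\varepsilon}(x)=\tfrac{2p}{d}\bigl(1-\tfrac{a(x)}{1+k\rho_{\varepsilon}}\bigr)u_{\varepsilon}(x)=g_{\varepsilon}(x)u_{\varepsilon}(x)$ and then pass to the limit by pairing the strongly convergent factor $g_{\varepsilon}$ with the weak$^*$ convergent $u_{\varepsilon}$ from Theorem \ref{thm:mut:conv}, identifying the limit constant via $1+k\bar{\rho}_0=2\bar{a}$. The only difference is cosmetic: the paper cites Proposition $3.13$ in \cite{brezis} for the strong--weak$^*$ pairing limit, whereas you reprove that fact by hand through the splitting $\integral (g_{\varepsilon}-g_0)u_{\varepsilon}\,\D x+\integral g_0 u_{\varepsilon}\,\D x$ together with the uniform bound $\|u_{\varepsilon}\|_{L^1(\Omega)}=c_{\varepsilon}\rightarrow\bar{\rho}_1$.
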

\begin{proof}
Theorem \ref{thm:mut:conv} provides convergence of the second component of the steady state. The first component can be written as
$$
v_{\varepsilon}(x)=\frac{2}{d}\left(1-\frac{a(x)}{1+k \rho_{\varepsilon}}\right)pu_{\varepsilon}(x)=:g_{\varepsilon}(x)u_{\varepsilon}(x).
$$
Proposition \ref{prop:mut:rhoconv} implies that for any $f \in C_{c}$
\[g_{\varepsilon}(x)f(x)\stackrel{\varepsilon\rightarrow 0}{\longrightarrow} 
\frac{2}{d}\left(1-\frac{a(x)}{1+k \bar{\rho_{0}}}\right)pf(x)=:g_{0}(x)f(x) \quad \text{strongly},\] which, by Proposition $3.13$ in \cite{brezis}, implies that
$$
\langle g_{\varepsilon}(x)f(x),u_{\varepsilon}(x)\rangle \stackrel{\varepsilon\rightarrow 0}{\longrightarrow} \langle g_{0}(x)f(x), \bar{\rho}_1\delta_{\bar{x}}\rangle,
$$ 
that is,
\begin{align*}
	\eps{v} \rightharpoonup^* g_{0}(\bar{x}) \bar{\rho}_1\delta_{\bar{x}}=\frac{2\bar{a}-1}{k}\delta_{\bar{x}}
\end{align*}

which concludes the proof.
\end{proof}
\section{Stability of the steady states}
Selection-mutation equations can be written, in a general way, in the form
\begin{eqnarray}\label{eq:mut:shortsys}
	\frac{\partial}{\partial t}z(t,x) = A_{\varepsilon}(F(z))z,
	\end{eqnarray}
	 with $F$ being a linear function from the state space to an $m$-dimensional space and such that $A_{\varepsilon}(E)$ is a linear operator for a fixed $E=F(z)$.\\

Assuming that equation \eqref{eq:mut:shortsys} has a semilinear structure and the spectral mapping property holds (i.e., the growth bound of a semigroup is equal to the spectral bound of its generator, which is the case in $L^{1}$), the principle of linearised stability \cite{webb85,henry} yields local asymptotic stability of a steady state if the spectrum of the corresponding linearisation is located entirely in the open left half plane.
A stability result for equation \eqref{eq:mut:shortsys} is provided in Ref. \cite{calsina07}. It is shown using the principle of linearised stability and the fact
that, in case of  finite dimensional nonlinearity, the linearised operator at the steady state is a degenerated perturbation of a known operator with spectral bound equal to 0. This reduces
the computation of the spectrum of the linearisation to the computation of zeroes of the so-called Weinstein-Aronszajn determinant (\cite{kato}).\\

System \eqref{eq:mut:sys2} can be written in form \eqref{eq:mut:shortsys} with 
\begin{equation}
\label{F}
F:\R\times L^1(\Omega) \rightarrow \R, (\rho,u) \mapsto \rho,
\end{equation}
and 
\begin{eqnarray*}
		&A_{\varepsilon}\left(F\begin{pmatrix} \rho\\ u \end{pmatrix}\right) 		= A_{\varepsilon}(\rho)= \begin{pmatrix}
	-d & \integral 2\left(1 - \frac{a(x)}{1+k\rho}\right)p\cdot \,\D x 			\\
	0 & \left(\frac{2a(x)}{1+k\rho} -(1+ 										\varepsilon)\right)p  + \varepsilon p						\integral \kappa(x,y)\cdot\,\D y&
	\end{pmatrix}.
\end{eqnarray*}
Note that  operator $\eps{A}(\rho)$ generates a $C^0$ positive semigroup. Indeed,  $\eps{A}(\rho)$ can be written in the following way
\begin{align*}
	\eps{A}(\rho) &= \begin{pmatrix}
	-d & 0 \\
	0 & \left(\frac{2a(x)}{1+k\rho} -(1+\varepsilon)\right)p 	\cdot 
	\end{pmatrix} + \begin{pmatrix}
	0 & \integral 2\left(1 - \frac{a(x)}{1+k\rho}\right)p\cdot \,\D x \\
	0& \varepsilon p \integral \kappa(x,y)\cdot\,\D y
	\end{pmatrix},
	\end{align*}
where the first term is the generator of a $C^0$ positive semigroup and the second term is a linear positive operator, thus the sum generates a $C^0$ positive semigroup \cite{EN}.\\
Linearising system \eqref{eq:mut:sys2} at the steady state $z_{\varepsilon}:=(\rho_{\varepsilon},u_{\varepsilon})$, i.e., taking a perturbation $z=z_{\varepsilon}+\bar{z}$, applying the stationary equation $\frac{\partial}{\partial t}z_{\varepsilon}=0$ and Taylor's formula, we obtain
$$
\begin{array}{rcl}
\left(\begin{array}{c} \bar{\rho}'\\ \frac{\partial \bar{u}}{\partial t}\end{array} \right) & = & (\tilde{A}_{\varepsilon}+S_{\varepsilon})\left(\begin{array}{c} \bar{\rho}\\  \bar{u}\end{array} \right),
\end{array}
$$
where 
\begin{equation}
\label{eq:mut:aeps}\begin{array}{rcl}
		\tilde{A}_{\varepsilon} = A_{\varepsilon}(\rho_{\varepsilon}) &=&				\begin{pmatrix}
		-d & \integral 2\left( 1- \frac{a(x)}										{1+k\rho_{\varepsilon}}\right)p \cdot \,\D x 
	 \\
		0 & \left( \frac{2a(x)}{1+k\rho_{\varepsilon}}-(1+ 						\varepsilon)\right)p \cdot + 		\varepsilon p\integral 	\kappa(x,y)\cdot\,\D y
	\end{pmatrix},
	
	 \\
	 
	\eps{S} &= &\begin{pmatrix}
		\integral \frac{2a(x)kp\eps{u}}{(1+k\eps{\rho})^2} & 0 \\
		-\frac{2a(x)kp\eps{u}}{(1+k\eps{\rho})^2} & 0
	\end{pmatrix}.
	\end{array}
\end{equation}%
with $\tilde{A}_{\varepsilon}$, $S_{\varepsilon}$ defined in $\R \times L^{1}(\Omega)$.\\

We also define the following ``limit" operators in $\R \times \mathcal{M}^+(\Omega)$
\begin{equation}
\begin{array}{rcl}
\label{eq:mut:a0}
	\tilde{A}_0 =A_0(\bar{\rho_0}) &= \begin{pmatrix}
	-d & \integral 2\left( 1- \frac{a(x)}{1+k\bar{\rho_0}}\right)p \cdot 			\,\D x \\ 
	0 &  \left( \frac{2a(x)}{1+k\bar{\rho_0}} -1\right)p \cdot 
	\end{pmatrix}, \\
	S_0 &= \begin{pmatrix}
	 \frac{2\bar{a}kp\bar{\rho_1}}{(1+k\bar{\rho_0})^2} \, & 			0\\
	-\frac{2\bar{a}kp\bar{\rho_1}}{(1+k\bar{\rho_0})^2}\delta_{\bar{x}} & 0
	\end{pmatrix}.
\end{array}
\end{equation}
\\
where $\bar{\rho_0}$ and $\bar{\rho_1}$ are given by Theorem \ref{thm:mut:conv}.

 As mentioned before, our aim is to apply the stability result given in Ref. \cite{calsina07} to system \eqref{eq:mut:sys2}. For the sake of completeness, we summarize the two relevant theorems \cite[Theorem $1$ and $2$]{calsina07} into the following theorem (for $m=1$ which is the case for our model): 
\begin{theorem}\label{thm:mut:stability}
	Let $z_{\varepsilon}$ be a non-trivial positive steady state of equation\eqref{eq:mut:shortsys}, where $F$ is a linear function from the state space to an m-dimensional space and, for a fixed $E=F(z)$,  $A_{\varepsilon}(E)$ is a generator of a $C^0$ positive semigroup on the state space. Let 					$\tilde{A}_{\varepsilon} + S_{\varepsilon}$ be the linearisation of 		$A_{\varepsilon}$ at the equilibrium $z_{\varepsilon}$. Let 				$\omega_{\varepsilon}(\lambda)$, $\omega_0(\lambda)$ be the 					Weinstein-Aronszajn determinants for 										$\tilde{A}_{\varepsilon}+S_{\varepsilon}$ and $\tilde{A}_0 + S_0$, 			respectively and $D:=\left\{\lambda\in\C \left| \Re(\lambda) \geq 0, 			\lambda	\neq 0\right.\right\}$. Let $\omega_{\varepsilon}(\lambda),			\omega_0(\lambda)$ be holomorphic functions in $D$ such that 						$\omega_0(\lambda)$ does  not vanish in $D$ and 
	\begin{eqnarray}\label{ass:mut:det}
	\omega_{\varepsilon}(\lambda) \xrightarrow{\varepsilon\rightarrow 0}		 \omega_0(\lambda)
	\end{eqnarray}
	uniformly in $\lambda$ on compact sets in $D$. Additionally, assume 		that 
	\begin{eqnarray}\label{ass:mut:bound}
	\exists L> 0 \;\forall \left|\lambda\right| > L: \quad \left\| 				S_{\varepsilon} R(\lambda,\tilde{A}_{\varepsilon})\right\|_{\infty} < 	\frac12. 
	\end{eqnarray}
	If $0$ is a strictly dominant eigenvalue of 								$\tilde{A}_{\varepsilon}$ with algebraic multiplicity $1$, 					$P_{\varepsilon}$ is the projection onto the eigenspace of the 				eigenvalue $0$ and 
	\begin{eqnarray}\label{ass:mut:lim}
		F\left( P_{\varepsilon} S_{\varepsilon}z_{\varepsilon}\right) 				\neq 0\quad\mbox{and}\quad \liminf\limits_{(\varepsilon,					\lambda)\rightarrow (0^+,0)} \lambda F \left( 								(\tilde{A}_{\varepsilon} - \lambda)^{-1} 									S_{\varepsilon}z_{\varepsilon}\right) \neq 0,
	\end{eqnarray}
	then for $\varepsilon$ small enough the steady state 						$z_{\varepsilon}$ is locally asymptotically stable. 
\end{theorem}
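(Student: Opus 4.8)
The statement is the specialisation to $m=1$ of Theorems $1$ and $2$ in \cite{calsina07}, so the plan is to reproduce that argument in the present notation. The backbone is the principle of linearised stability \cite{webb85,henry}: because \eqref{eq:mut:shortsys} is semilinear and, in $L^1$, the spectral bound of a generator equals its growth bound, it suffices to prove $\sigma(\tilde A_{\varepsilon}+S_{\varepsilon})\cap\{\Re\lambda\geq 0\}=\emptyset$ for $\varepsilon$ small, since a strictly negative spectral bound then yields local asymptotic stability of $z_{\varepsilon}$. The crucial structural fact is that $S_{\varepsilon}$ is a degenerate (here rank-one) perturbation of $\tilde A_{\varepsilon}$ factoring through $F$ as $S_{\varepsilon}z=F(z)\,\xi_{\varepsilon}$, where $\xi_{\varepsilon}$ is the single nonzero column of $S_{\varepsilon}$ in \eqref{eq:mut:aeps} and, since $F(z_{\varepsilon})=\rho_{\varepsilon}$, one has $S_{\varepsilon}z_{\varepsilon}=\rho_{\varepsilon}\,\xi_{\varepsilon}$. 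By the Weinstein--Aronszajn theory of degenerate perturbations \cite{kato}, for $\lambda\in\rho(\tilde A_{\varepsilon})$ one has $\lambda\in\sigma(\tilde A_{\varepsilon}+S_{\varepsilon})$ if and only if the scalar determinant $\omega_{\varepsilon}(\lambda)=1+F\big((\tilde A_{\varepsilon}-\lambda)^{-1}\xi_{\varepsilon}\big)$ vanishes; this reduces the whole question on $\{\Re\lambda\geq 0\}$ to locating the zeros and poles of the holomorphic functions $\omega_{\varepsilon}$.

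With this reduction, I would cover $\{\Re\lambda\geq 0\}$ by three regions whose combined constraints fix the threshold $\varepsilon_{0}$. For $|\lambda|>L$, the resolvent bound \eqref{ass:mut:bound} gives $\|S_{\varepsilon}R(\lambda,\tilde A_{\varepsilon})\|_{\infty}<\tfrac12$, so $I-S_{\varepsilon}R(\lambda,\tilde A_{\varepsilon})$ is invertible by a Neumann series (equivalently $\omega_{\varepsilon}(\lambda)\neq 0$) and $\lambda\in\rho(\tilde A_{\varepsilon}+S_{\varepsilon})$, uniformly in $\varepsilon$. On the compact annular set $\{\delta_{0}\leq|\lambda|\leq L\}\cap D$, the hypothesis that $\omega_{0}$ does not vanish on $D$ together with the uniform convergence \eqref{ass:mut:det} and a Hurwitz (Rouché) argument force $\omega_{\varepsilon}$ to be zero-free there once $\varepsilon$ is small. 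The remaining and delicate region is the punctured neighbourhood $\{0<|\lambda|<\delta_{0}\}\cap D$ of the critical value $\lambda=0$, and this is exactly where the two conditions \eqref{ass:mut:lim} are used.

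Near $\lambda=0$ I would expand the resolvent about its pole. Since $0$ is an algebraically simple, strictly dominant eigenvalue of $\tilde A_{\varepsilon}$ with spectral projection $P_{\varepsilon}$ and eigenvector $z_{\varepsilon}$ (indeed $\tilde A_{\varepsilon}z_{\varepsilon}=0$ by the steady-state equations \eqref{eq:mut:steady}), the Laurent expansion $(\tilde A_{\varepsilon}-\lambda)^{-1}=-\lambda^{-1}P_{\varepsilon}+O(1)$ gives $\lambda\,\omega_{\varepsilon}(\lambda)=\lambda+\rho_{\varepsilon}^{-1}\,\lambda F\big((\tilde A_{\varepsilon}-\lambda)^{-1}S_{\varepsilon}z_{\varepsilon}\big)\to-\rho_{\varepsilon}^{-1}F(P_{\varepsilon}S_{\varepsilon}z_{\varepsilon})$ as $\lambda\to 0$. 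The first condition $F(P_{\varepsilon}S_{\varepsilon}z_{\varepsilon})\neq 0$ then says that $\omega_{\varepsilon}$ has a genuine simple pole at $0$; by the Weinstein--Aronszajn multiplicity count the algebraic multiplicity of $0$ as an eigenvalue of $\tilde A_{\varepsilon}+S_{\varepsilon}$ therefore drops from $1$ to $0$, so $0\notin\sigma(\tilde A_{\varepsilon}+S_{\varepsilon})$. The $\liminf$ condition, phrased through $\lambda F((\tilde A_{\varepsilon}-\lambda)^{-1}S_{\varepsilon}z_{\varepsilon})$, upgrades this to a uniform statement: it keeps $|\lambda\,\omega_{\varepsilon}(\lambda)|$ bounded away from $0$ as $(\varepsilon,\lambda)\to(0^{+},0)$, which excludes zeros of $\omega_{\varepsilon}$ in a fixed punctured disc around $0$ for all small $\varepsilon$ and thereby fixes $\delta_{0}$ compatibly with the annular step. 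Collecting the three regions yields $\sigma(\tilde A_{\varepsilon}+S_{\varepsilon})\cap\{\Re\lambda\geq 0\}=\emptyset$; as $S_{\varepsilon}$ is finite-rank, hence compact, the essential spectrum is unchanged and sits in the open left half plane, so only finitely many eigenvalues could approach the imaginary axis and the spectral bound is strictly negative, giving local asymptotic stability of $z_{\varepsilon}$.

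I expect the main obstacle to be precisely the punctured neighbourhood of $\lambda=0$: the critical eigenvalue lies on the boundary of the stability region, the resolvent is singular there, and the two limits in \eqref{ass:mut:lim} are \emph{joint} limits in $(\varepsilon,\lambda)$, so the estimates must be made uniform in $\varepsilon$ rather than for each fixed $\varepsilon$ separately. The other genuine labour, needed to actually apply this theorem to system \eqref{eq:mut:sys2}, is the verification of \eqref{ass:mut:det}, \eqref{ass:mut:bound} and \eqref{ass:mut:lim} for the concrete operators \eqref{eq:mut:aeps}--\eqref{eq:mut:a0}; there one passes to the limit operators $\tilde A_{0}+S_{0}$ and identifies $\omega_{0}$ using the convergence established in Theorem \ref{thm:mut:conv} and Proposition \ref{prop:mut:eifunconv}.
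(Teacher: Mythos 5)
Your proposal is correct and takes essentially the same approach as the paper: the paper does not prove Theorem \ref{thm:mut:stability} itself but states it as a summary, for $m=1$, of Theorems 1 and 2 of \cite{calsina07}, whose argument is exactly what you reconstruct — the principle of linearised stability combined with the Weinstein--Aronszajn determinant of the rank-one perturbation $S_{\varepsilon}$, splitting $\{\Re\lambda\geq 0\}$ into the large-$|\lambda|$ region controlled by \eqref{ass:mut:bound}, a compact region controlled by \eqref{ass:mut:det} together with the non-vanishing of $\omega_0$, and a punctured neighbourhood of $0$ controlled by \eqref{ass:mut:lim}. The genuine labour in the paper is, as you anticipate, the verification of these hypotheses for the concrete operators \eqref{eq:mut:aeps}--\eqref{eq:mut:a0}, carried out in Appendix A.
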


\begin{theorem}\label{thm:mut:realstead}
	Let Assumption \ref{ass:mut:2} hold and additionally, let $\kappa$ 			be separable in its variables, i.e., 
	\[\exists \kappa_1,\kappa_2\in C(\overline{\Omega}):\; \kappa(x,y) = 	\kappa_1(x)\kappa_2(y).\]
 Then, for $\varepsilon$ small enough, the steady state $(\eps{\rho},\eps{u})$ 	of system \eqref{eq:mut:sys2} is locally 						asymptotically stable.
\end{theorem}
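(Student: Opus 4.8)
The plan is to verify the hypotheses of Theorem \ref{thm:mut:stability} one by one for the specific operators $\tilde{A}_{\varepsilon}$, $S_{\varepsilon}$ and their limits $\tilde{A}_0$, $S_0$ given in \eqref{eq:mut:aeps} and \eqref{eq:mut:a0}, under the additional separability assumption $\kappa(x,y)=\kappa_1(x)\kappa_2(y)$. The separability is what makes the computation tractable: it lets us write the nonlocal part of $\tilde{A}_{\varepsilon}$ as a rank-one perturbation, so that the resolvent $R(\lambda,\tilde{A}_{\varepsilon})$ and hence the Weinstein-Aronszajn determinant $\omega_{\varepsilon}(\lambda)$ can be computed in closed form rather than merely estimated. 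First I would compute $\omega_{\varepsilon}(\lambda)$ explicitly. Since $S_{\varepsilon}$ has finite rank (indeed, because $F$ is scalar, $m=1$, the relevant perturbation is effectively rank one after applying $F$), the Weinstein-Aronszajn determinant is a scalar holomorphic function whose zeros in the right half plane coincide with the non-zero spectrum of $\tilde{A}_{\varepsilon}+S_{\varepsilon}$ there.

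Next I would establish the two convergence-type hypotheses. For \eqref{ass:mut:det}, I would use the convergence results already proved in Section $4$: Theorem \ref{thm:mut:conv} gives $\eps{\rho}\to\bar{\rho}_0$ and $\eps{u}\rightharpoonup^*\bar{\rho}_1\delta_{\bar{x}}$, and Proposition \ref{prop:mut:eifunconv} gives the Dirac-sequence behaviour of the eigenfunctions. These feed directly into the coefficients of $\tilde{A}_{\varepsilon}$, $S_{\varepsilon}$ and, through the explicit formula for the determinant, yield $\omega_{\varepsilon}(\lambda)\to\omega_0(\lambda)$ uniformly on compact subsets of $D$; the weak$^*$ convergence of $\eps{u}$ to a Dirac mass at $\bar{x}$ is precisely what turns the integral terms in $S_{\varepsilon}$ into the point-evaluation terms appearing in $S_0$. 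The non-vanishing of $\omega_0(\lambda)$ on $D$ is the key structural fact that must be checked by hand: it amounts to showing that the limiting linearisation $\tilde{A}_0+S_0$ has no spectrum in the closed right half plane away from $0$, which I expect to follow from the sign of the feedback coefficient $\tfrac{2\bar{a}kp\bar{\rho}_1}{(1+k\bar{\rho}_0)^2}$ together with the fact that $0$ is a strictly dominant eigenvalue of $\tilde{A}_0$.

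I would then verify the uniform resolvent bound \eqref{ass:mut:bound}. Because $S_{\varepsilon}$ is bounded (its entries are $L^1$-integral functionals and multiplications by bounded functions) and $\|R(\lambda,\tilde{A}_{\varepsilon})\|_{\infty}\to 0$ as $|\lambda|\to\infty$ uniformly in $\varepsilon$ (the generator $\tilde{A}_{\varepsilon}$ has spectrum bounded uniformly in $\varepsilon$ to the left), the product $\|S_{\varepsilon}R(\lambda,\tilde{A}_{\varepsilon})\|_{\infty}$ can be made smaller than $\tfrac12$ for $|\lambda|>L$ with $L$ independent of $\varepsilon$. Finally, for \eqref{ass:mut:lim} I would use that $0$ is a strictly dominant, algebraically simple eigenvalue of $\tilde{A}_{\varepsilon}$ (this is exactly the content of the eigenvalue analysis giving $\lambda_{\varepsilon}(\rho_{\varepsilon})=0$ in Sections $3$ and $4$, so $P_{\varepsilon}$ is the associated rank-one spectral projection) and compute $F(P_{\varepsilon}S_{\varepsilon}z_{\varepsilon})$ and the $\liminf$ of $\lambda F((\tilde{A}_{\varepsilon}-\lambda)^{-1}S_{\varepsilon}z_{\varepsilon})$ explicitly; here $F(\rho,u)=\rho$ simply extracts the first coordinate, so these reduce to scalar expressions involving $\bar{\rho}_0,\bar{\rho}_1,\bar{a}$ and $d,k,p$, which are non-zero by the positivity assumptions in Assumption \ref{ass:mut:2}.

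The main obstacle I anticipate is proving that $\omega_0(\lambda)$ does not vanish anywhere in $D$. Checking the limits \eqref{ass:mut:det}--\eqref{ass:mut:lim} is largely bookkeeping once the determinant is written down, but the non-vanishing of the limiting determinant is a genuine stability statement about the pure-selection linearisation and must be argued carefully: one has to rule out, for every $\lambda$ with $\Re(\lambda)\ge 0$ and $\lambda\ne 0$, that the scalar equation defining the zeros of $\omega_0$ is satisfied. I expect this to come down to showing that a certain explicit scalar function (a resolvent evaluated at $\bar{x}$ against the feedback term) has the correct sign or winding, using that $\tfrac{2a(x)}{1+k\bar{\rho}_0}-1$ attains its maximum value $0$ uniquely at $\bar{x}$, so that the diagonal part of $\tilde{A}_0$ contributes no right-half-plane spectrum and the rank-one coupling cannot create any. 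The delicate part is handling the point $\lambda=0$, excluded from $D$, as a removable issue while controlling the behaviour of $\omega_0$ as $\lambda\to 0$; this is why the $\liminf$ condition in \eqref{ass:mut:lim} is imposed, and matching it against the explicit determinant is where I expect most of the real work to lie.
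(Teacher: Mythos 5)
Your overall skeleton is exactly the paper's: apply Theorem \ref{thm:mut:stability}, exploit the rank-one structure of $S_{\varepsilon}$ to write the Weinstein--Aronszajn determinants explicitly, deduce \eqref{ass:mut:det} from Theorem \ref{thm:mut:conv} and Proposition \ref{prop:mut:eifunconv}, and get \eqref{ass:mut:bound} from a Neumann-series resolvent bound. But you have misallocated the difficulty, and the two steps you dismiss as ``largely bookkeeping'' --- the two conditions in \eqref{ass:mut:lim} --- are precisely where the paper's Appendix A does its real work, and where your plan has genuine gaps. First, $F(P_{\varepsilon}S_{\varepsilon}z_{\varepsilon})\neq 0$ is not a scalar identity in $\bar{\rho}_0,\bar{\rho}_1,\bar{a},d,k,p$: it must hold for each fixed $\varepsilon>0$, and the objects entering it ($u_{\varepsilon}$ and the projection $P_{\varepsilon}$) are genuinely infinite-dimensional. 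The paper identifies $P_{\varepsilon}$ via the decomposition $\R\times L^{1}(\Omega)=\langle z_{\varepsilon}\rangle\oplus\mathrm{Range}(\tilde{A}_{\varepsilon})$ and the eigenfunction $(\rho^{*}_{\varepsilon},u^{*}_{\varepsilon})$ of the adjoint operator at eigenvalue $0$; one then has to show $\rho^{*}_{\varepsilon}=0$ and, by irreducibility and positive-semigroup theory, that $u^{*}_{\varepsilon}$ is \emph{strictly} positive, whence $\langle z^{*}_{\varepsilon},S_{\varepsilon}z_{\varepsilon}\rangle=-\int_{\Omega}\frac{2a(x)kp\,\rho_{\varepsilon}u_{\varepsilon}(x)u^{*}_{\varepsilon}(x)}{(1+k\rho_{\varepsilon})^{2}}\,\D x<0$. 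Nothing in your proposal produces the adjoint eigenfunction or its strict positivity.

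Second, the $\liminf$ condition: you locate the delicacy near $\lambda=0$ in the behaviour of $\omega_{0}$, but the actual obstruction is the joint limit of $\lambda R(\lambda,C_{\varepsilon,\rho_{\varepsilon}})$ applied to (the second component of) $S_{\varepsilon}z_{\varepsilon}$. Since $0$ is an eigenvalue of $C_{\varepsilon,\rho_{\varepsilon}}$, the resolvent blows up as $\lambda\to 0$, while as $\varepsilon\to 0$ the operator degenerates to a multiplication operator, so neither limit can be taken naively. The paper resolves this by using separability of $\kappa$ --- and, per its closing remark, this is the \emph{only} place separability is needed, not the computation of $\omega_{\varepsilon}$ as you suggest --- to write $R(\lambda,C_{\varepsilon,\rho_{\varepsilon}})$ in closed form following \cite{calsina07}, splitting the resulting integral into three terms, and showing via a Dirac-sequence argument (for the auxiliary family built from $\kappa_{1}\kappa_{2}/(\alpha_{\varepsilon}(\alpha_{\varepsilon}+\lambda))$) that two terms vanish and the third converges to $-\bar{\rho}_{1}H(\bar{x})\beta(\bar{x})<0$. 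Your plan contains neither the explicit resolvent formula nor this limiting argument. Conversely, the step you single out as the main obstacle --- non-vanishing of $\omega_{0}$ on $D$ --- is comparatively easy once the explicit determinant is written down: it reduces to a trace/determinant check for the $2\times 2$ linearisation of the limiting (pure selection) system at $(\bar{\rho}_0,\bar{\rho}_1)$, using that $\beta:=\frac{2\bar{a}kp\bar{\rho}_1}{(1+k\bar{\rho}_0)^{2}}=\frac{d(2\bar{a}-1)}{2\bar{a}}<d$, so both roots of the associated quadratic lie strictly in the left half plane. Your sign intuition there is correct, but it is a two-line verification, not the heart of the proof.
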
 
The proof of this theorem is a direct application of Theorem \ref{thm:mut:stability}. Since it is technical, it is deferred to Appendix A.
\begin{theorem}\label{thm:mut:realstead2}
	Let Assumption \ref{ass:mut:2} hold and additionally, let $\kappa$ 			be separable in its variables, i.e. 
	\[\exists \kappa_1,\kappa_2\in C(\overline{\Omega}):\; \kappa(x,y) = 	\kappa_1(x)\kappa_2(y).\]
 Then, for $\varepsilon$ small enough, the steady state $(\eps{v},\eps{u})$ 	of system \eqref{eq:mut:sys} is locally asymptotically stable.
\end{theorem}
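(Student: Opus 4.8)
The plan is to reduce the stability of the full steady state $(v_\varepsilon,u_\varepsilon)$ to the already established stability of the reduced steady state $(\rho_\varepsilon,u_\varepsilon)$ of system \eqref{eq:mut:sys2} (Theorem \ref{thm:mut:realstead}), exploiting that the nonlinearity in \eqref{eq:mut:sys} depends on $(v_\varepsilon,u_\varepsilon)$ only through the scalar $\rho_\varepsilon=\integral v_\varepsilon\,\D x$. I would therefore write \eqref{eq:mut:sys} in the abstract form \eqref{eq:mut:shortsys} using the \emph{same} one-dimensional nonlinearity
\[ F:L^1(\Omega)\times L^1(\Omega)\to\R,\qquad F(v,u)=\integral v\,\D x, \]
and, for fixed $E=F(v,u)=\rho$, the operator
\[ A_{\varepsilon}(\rho)\begin{pmatrix} v\\ u\end{pmatrix}=\begin{pmatrix} 2\left(1-\frac{a(x)}{1+k\rho}\right)pu-dv\\[2mm] \left(\frac{2a(x)}{1+k\rho}-(1+\varepsilon)\right)pu+\varepsilon p\integral\kappa(x,y)u(y)\,\D y\end{pmatrix}. \]
Exactly as for the matrix operator following \eqref{eq:mut:a0}, $A_{\varepsilon}(\rho)$ is triangular with a strictly positive off-diagonal multiplication entry $2\left(1-\frac{a(x)}{1+k\rho}\right)p>0$, so it generates a $C^0$ positive semigroup and the hypotheses of Theorem \ref{thm:mut:stability} are meaningful in this setting.

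Linearising at the steady state then yields $\mathcal{L}_{\varepsilon}=\tilde{\mathcal A}_{\varepsilon}+\mathcal S_{\varepsilon}$, where $\tilde{\mathcal A}_{\varepsilon}=A_{\varepsilon}(\rho_\varepsilon)$ and the degenerate perturbation is
\[ \mathcal S_{\varepsilon}\begin{pmatrix} v\\ u\end{pmatrix}=\begin{pmatrix} \phantom{-}\frac{2a(x)kp u_\varepsilon(x)}{(1+k\rho_\varepsilon)^2}\,\integral v\,\D y\\[2mm] -\frac{2a(x)kp u_\varepsilon(x)}{(1+k\rho_\varepsilon)^2}\,\integral v\,\D y\end{pmatrix}, \]
which is finite rank and reads the perturbation only through $F(v,u)=\integral v$. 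The key structural observation is that, for $\lambda\neq -d$, solving the (linear, exponentially damped) first component for $\bar v=(\lambda+d)^{-1}(\,\cdot\,)$ and integrating over $\Omega$ recovers precisely the reduced resolvent equation $(\lambda-(\tilde A_{\varepsilon}+S_{\varepsilon}))(\bar\rho,\bar u)=(\,\cdot\,)$ with $\bar\rho=\integral\bar v\,\D y$, where $\tilde A_{\varepsilon}$, $S_{\varepsilon}$ are as in \eqref{eq:mut:aeps}. Consequently the Weinstein--Aronszajn determinant of $\mathcal{L}_{\varepsilon}$ coincides on $D=\{\lambda\in\C:\Re\lambda\ge0,\ \lambda\neq0\}$ with that of $\tilde A_{\varepsilon}+S_{\varepsilon}$, and one obtains the spectral inclusion $\sigma(\mathcal{L}_{\varepsilon})\subseteq\sigma(\tilde A_{\varepsilon}+S_{\varepsilon})\cup\{-d\}$.

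With this in hand I would verify the hypotheses of Theorem \ref{thm:mut:stability} for $\mathcal{L}_{\varepsilon}$. Since $\tilde{\mathcal A}_{\varepsilon}$ is triangular, its spectrum is $\{-d\}\cup\sigma(C_{\varepsilon,\rho_\varepsilon})$; as $0$ is the algebraically simple, strictly dominant eigenvalue of $C_{\varepsilon,\rho_\varepsilon}$ (Proposition \ref{prop:mut:eigenvalue} and Proposition \ref{prop:mut:existe2}) and $-d<0$, the value $0$ remains a strictly dominant, algebraically simple eigenvalue of $\tilde{\mathcal A}_{\varepsilon}$, the triangular coupling creating no Jordan block because the simple eigenfunction of $C_{\varepsilon,\rho_\varepsilon}$ does not lie in the range of $C_{\varepsilon,\rho_\varepsilon}$. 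The determinant conditions $\omega_0\neq0$ on $D$ and the convergence \eqref{ass:mut:det}, the bound \eqref{ass:mut:bound} and the transversality conditions \eqref{ass:mut:lim} then follow from their reduced-system counterparts, already established in the proof of Theorem \ref{thm:mut:realstead}, since every quantity entering them is evaluated through $F(v,u)=\integral v$ and is therefore insensitive to the extra mode at $\lambda=-d$, which lies outside $D$. Theorem \ref{thm:mut:stability} then yields local asymptotic stability of $(v_\varepsilon,u_\varepsilon)$ for $\varepsilon$ small enough.

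The main obstacle is to make the coincidence of the two Weinstein--Aronszajn determinants and of the transversality data fully rigorous, i.e. to justify that the exponentially damped $v$-mode decouples cleanly in the half-plane $D$ and contributes only the harmless spectral value $-d$. This requires confirming that $\tilde{\mathcal A}_{\varepsilon}$ generates a positive $C^0$ semigroup whose growth and spectral bounds coincide in $L^1$, so that the principle of linearised stability applies and the finite-rank perturbation $\mathcal S_{\varepsilon}$ does not enlarge the essential spectrum; checking algebraic simplicity of the eigenvalue $0$ for the coupled rather than diagonal operator; and verifying that the reconstruction $\bar v=(\lambda+d)^{-1}(\,\cdot\,)$ is uniformly controlled for $|\lambda|$ large, so that \eqref{ass:mut:bound} survives the passage from $S_{\varepsilon}$ to $\mathcal S_{\varepsilon}$. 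None of these should fail, at the steady state $\rho_\varepsilon>\bar{\rho}_{\varepsilon}$, so the multiplier $\left(\frac{2a(x)}{1+k\rho_\varepsilon}-(1+\varepsilon)\right)p$ is strictly negative and yields a genuine spectral gap, but the bookkeeping relating the full and reduced determinants is the technical heart of the argument.
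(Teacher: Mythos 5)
Your proposal is correct, but it takes a genuinely different and considerably heavier route than the paper. The paper does not linearise the full system at all: it exploits the cascade structure of \eqref{eq:mut:sys} at the \emph{nonlinear} level. Since $\rho_\varepsilon(t)=\integral v_\varepsilon(t,x)\,\D x$ together with $u_\varepsilon$ evolves exactly by the reduced system \eqref{eq:mut:sys2}, Theorem \ref{thm:mut:realstead} already gives convergence of the pair $(\rho_\varepsilon(t),u_\varepsilon(t,\cdot))$; the $v$-equation is then a linear ODE in time with damping $-d$ and convergent forcing, so the variation-of-constants formula $v_\varepsilon(t,x)=v_0(x)e^{-dt}+\int_0^t 2\left(1-\frac{a(x)}{1+k\rho_\varepsilon(\tau)}\right)p\,u_\varepsilon(\tau,x)\,e^{-d(t-\tau)}\,\D\tau$ and a triangle-inequality estimate in $L^1(\Omega)$ give $\|v_\varepsilon(t,\cdot)-v_\varepsilon\|_{L^1(\Omega)}\to 0$, with no further spectral theory. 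You instead redo the Weinstein--Aronszajn machinery for the full system. Your key structural claims are in fact true: solving the resolvent equation of the triangular operator componentwise shows that the full and reduced determinants coincide on $D$; the adjoint null vector has vanishing first component in both settings, so the transversality data \eqref{ass:mut:lim} transfer verbatim; and $0$ stays algebraically simple for the coupled operator because $\varphi_{\varepsilon,\rho_\varepsilon}\notin\mathrm{Range}(C_{\varepsilon,\rho_\varepsilon})$. What your route buys is a precise spectral picture of the full linearisation, $\sigma(\mathcal{L}_\varepsilon)\subseteq\sigma(\tilde{A}_\varepsilon+S_\varepsilon)\cup\{-d\}$, which the paper never establishes; what it costs is exactly the bookkeeping you defer as ``the technical heart'', which would constitute the bulk of the proof and which the paper's short Duhamel argument makes unnecessary.
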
 

\begin{proof}
By Theorem \ref{thm:mut:realstead}, it only remains to prove the result for the first component of the steady state $v_{\varepsilon}(x)$. It holds
$$
\frac{\partial}{\partial t} v_{\varepsilon}(t,x) = F(u_{\varepsilon}(t,x),\rho_{\varepsilon}(t)) 	- dv_{\varepsilon}(t,x)
$$
with $F(u_{\varepsilon}(t,x),\rho_{\varepsilon}(t)):=2\left( 				1-\frac{a(x)}{1+k\eps{\rho}(t)} \right)pu_{\varepsilon}(t,x)$ and 
$$
v_{\varepsilon}(t,x)=v_{0}(x)e^{-dt}+\int_{0}^{t}F(u_{\varepsilon}(\tau,x),\rho_{\varepsilon}(\tau))e^{-d(t-\tau)}\mbox{d}\tau.
$$
Then, since $(v_{\varepsilon}(x),u_{\varepsilon}(x))$ is an equilibrium of system \eqref{eq:mut:sys}, it follows
$$
\begin{array}{rcl}
\|v_{\varepsilon}(t,x)-v_{\varepsilon}(x)\|_{L^{1}(\Omega)}&=&\int_{\Omega}\big|v_{0}(x)e^{-dt}+\int_{0}^{t}F(u_{\varepsilon}(\tau,x),\rho_{\varepsilon}(\tau))e^{-d(t-\tau)}\mbox{d}\tau\\  \\ & & -\dfrac{F(u_{\varepsilon}(x),\rho_{\varepsilon})}{d}\big|\mbox{d}x \\ \\ & \leq & \int_{\Omega}|v_{0}(x)e^{-dt}|\mbox{d}x+ \int_{0}^{t}e^{-d(t-\tau)} \int_{\Omega}\big|F(u_{\varepsilon}(\tau,x),\rho_{\varepsilon}(\tau))\\ \\ & & -F(u_{\varepsilon}(x),\rho_{\varepsilon})\big|\mbox{d}x\mbox{d}\tau+ \int_{\Omega}\big|F(u_{\varepsilon}(x),\rho_{\varepsilon})\big|\mbox{d}x\\ \\& & \big(\int_{0}^{t}e^{-d(t-\tau)}\mbox{d}\tau-\frac{1}{d}\big)
\end{array}
$$
which tends to zero as $t \rightarrow \infty$ due to Theorem \ref{thm:mut:realstead} (for more details on the second term see the proof of Lemma 7 in \cite{busse2016}).
\end{proof}

\section{Appendix A}
Here we provide the proof of Theorem \ref{thm:mut:realstead}. The proof is divided into several parts, each dealing with a different assumption of the stability theorem \ref{thm:mut:stability}. 

\subsection{Convergence of the Weinstein-Aronszajn determinants}

The operators $S_{\varepsilon}$ and $S_{0}$ defined in \eqref{eq:mut:aeps} and \eqref{eq:mut:a0} are one-dimensional range operators with basis

\begin{align}\label{eq:mut:basis}
 \begin{pmatrix}
	\integral \frac{2a(x)kp\eps{u}}{(1+k\eps{\rho})^2} \,\D x \\
	-\frac{2a(x)kp\eps{u}}{(1+k\eps{\rho})^2}
\end{pmatrix} \quad \text{and} \quad \begin{pmatrix}
	\frac{2\bar{a}kp\bar{\rho_1} }{(1+k\bar{\rho_0})^2} \\
	-\frac{2\bar{a}kp\bar{\rho_1}}{(1+k\bar{\rho_0})^2}\delta_{\bar{x}},
\end{pmatrix}
\end{align}
respectively. 
Therefore the Weinstein-Aronszajn determinants  \begin{equation}\label{WA}\begin{array}{c}\eps{\omega}(\lambda):=\det\left( Id + \eps{S}R(\eps{\tilde{A}},\lambda)_{|						rg(\eps{S})}\right)\\ \omega_0(\lambda):= \det\left( Id + S_0R(\tilde{A}_0,\lambda)_{|			rg(S_0)}\right)\end{array}\end{equation}
are well defined. In the next lemma we prove convergence result  \eqref{ass:mut:det}. 
\begin{lemma}
	Let $\eps{\omega}(\lambda), \omega_0(\lambda)$ be the Weinstein-			Aronszajn 	determinants defined in \eqref{WA}. Then,
	\[\eps{\omega}(\lambda) \stackrel{\varepsilon\rightarrow 0}					{\longrightarrow} \omega_0(\lambda)\]
	uniformly in $\lambda \in D= \left\{\lambda\in\C\left| 						\Re(\lambda)\geq 0, \lambda\neq 0\right.\right\}$. Both 					$\eps{\omega}(\lambda)$ and $\omega_0(\lambda)$ are holomorphic in 			$D$. 
\end{lemma}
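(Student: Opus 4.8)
The plan is to turn each Weinstein--Aronszajn determinant into an explicit scalar (meromorphic) function of $\lambda$ and then estimate the difference regime by regime. Since $\eps{S}$ and $S_0$ have one--dimensional range, spanned by the vectors in \eqref{eq:mut:basis}, and the nonlinearity enters only through $F(\rho,u)=\rho$, I would first write $\eps{S}z=F(z)\,b_\varepsilon$ and $S_0z=F(z)\,b_0$, where $b_\varepsilon,b_0$ are the respective range generators. For a rank--one perturbation the restricted determinant collapses to a scalar, giving
\[
\eps{\omega}(\lambda)=1+F\big(R(\eps{\tilde{A}},\lambda)\,b_\varepsilon\big),\qquad \omega_0(\lambda)=1+F\big(R(\tilde{A}_0,\lambda)\,b_0\big).
\]
Exploiting the block upper--triangular structure of $\eps{\tilde{A}}$ and $\tilde{A}_0$ (first diagonal block multiplication by $-d$, second block $C_{\varepsilon,\eps{\rho}}$, resp. multiplication by $m_0(x):=(\tfrac{2a(x)}{1+k\bar{\rho_0}}-1)p$) the resolvent applied to $b_\varepsilon$ is computed blockwise, and applying $F$ (which reads off the $\rho$--component) yields
\[
\eps{\omega}(\lambda)=1+\frac{1}{\lambda+d}\left(\eps{\beta}-\integral g_\varepsilon(x)\,[(\lambda-C_{\varepsilon,\eps{\rho}})^{-1}\eps{w}](x)\,\D x\right),
\]
with $g_\varepsilon(x)=2(1-\tfrac{a(x)}{1+k\eps{\rho}})p$, $\eps{w}=\tfrac{2a(x)kp\eps{u}}{(1+k\eps{\rho})^2}$, $\eps{\beta}=\integral \eps{w}\,\D x$, and the analogous expression for $\omega_0$ in which the resolvent of $C$ is replaced by multiplication by $(\lambda-m_0(x))^{-1}$ and $\eps{w}$ by the measure $\tfrac{2\bar{a}kp\bar{\rho_1}}{(1+k\bar{\rho_0})^2}\delta_{\bar{x}}$.

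Using the separability hypothesis $\kappa(x,y)=\kappa_1(x)\kappa_2(y)$ of Theorem \ref{thm:mut:realstead}, the operator $K_\varepsilon$ is itself rank one, so Sherman--Morrison renders $(\lambda-C_{\varepsilon,\eps{\rho}})^{-1}$ an explicit rational expression in $\lambda$; hence $\eps{\omega}$ is an explicit meromorphic function. For holomorphy on $D$ I would check that $\sigma(\eps{\tilde{A}})\cap\{\Re\lambda\ge0\}=\{0\}$ and $\sigma(\tilde{A}_0)\cap\{\Re\lambda\ge0\}=\{0\}$: indeed $\sigma(\eps{\tilde{A}})=\{-d\}\cup\sigma(C_{\varepsilon,\eps{\rho}})$, and by Proposition \ref{prop:mut:eigenvalue} the value $0$ is the strictly dominant eigenvalue of $C_{\varepsilon,\eps{\rho}}$ with all others in $\{\Re<0\}$, while $-d<0$; since $0\notin D$, both $\eps{\omega}$ and $\omega_0$ are holomorphic on $D$.

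For the uniform convergence I would split $D$ into three regimes. On $\{|\lambda|\ge L\}$ the factors $\tfrac{1}{\lambda+d}$ together with uniform bounds on $\|b_\varepsilon\|$ and $\|\eps{\tilde{A}}\|$ (the latter a uniformly bounded operator) give $\eps{\omega}(\lambda)-1=O(1/|\lambda|)$ and $\omega_0(\lambda)-1=O(1/|\lambda|)$ uniformly in $\varepsilon$, this decay being precisely the smallness $\|\eps{S}R(\lambda,\eps{\tilde{A}})\|<\tfrac12$ of \eqref{ass:mut:bound}; hence $|\eps{\omega}-\omega_0|$ is uniformly small for $|\lambda|\ge L$. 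On the compact annulus $\{\eta\le|\lambda|\le L,\ \Re\lambda\ge0\}$, which stays away from the pole at $0$, the spectrum $\sigma(C_{\varepsilon,\eps{\rho}})\subset\{\Re\le0\}$ has $0$ as its only boundary point and lies, for small $\varepsilon$, within $O(\varepsilon)$ of the negative real axis, yielding resolvent bounds uniform in $\lambda$ and $\varepsilon$; combined with $\eps{\rho}\to\bar{\rho_0}$ (Proposition \ref{prop:mut:rhoconv}), $\eps{u}\rightharpoonup^*\bar{\rho_1}\delta_{\bar{x}}$ (Theorem \ref{thm:mut:conv}) and $\varphi_{\varepsilon,\eps{\rho}}\rightharpoonup^*\delta_{\bar{x}}$ (Proposition \ref{prop:mut:eifunconv}), the integrals defining $\eps{\omega}$ converge to those defining $\omega_0$ uniformly on this annulus, by continuity of the integrands tested against the weak$^*$ convergent family.

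The main obstacle is the punctured neighbourhood $\{0<|\lambda|\le\eta,\ \Re\lambda\ge0\}$ of the pole, where both determinants blow up like $1/\lambda$. Here I would isolate principal parts, $\eps{\omega}(\lambda)=\tfrac{r_\varepsilon}{\lambda}+\eps{h}(\lambda)$ and $\omega_0(\lambda)=\tfrac{r_0}{\lambda}+h_0(\lambda)$ with $\eps{h},h_0$ holomorphic across $0$, the residue being $r_\varepsilon=-\tfrac1d\integral g_\varepsilon\,[P_\varepsilon\eps{w}]\,\D x$ with $P_\varepsilon$ the spectral projection of $C_{\varepsilon,\eps{\rho}}$ at $0$. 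The crux is to control $\eps{\omega}-\omega_0=\tfrac{r_\varepsilon-r_0}{\lambda}+(\eps{h}-h_0)$ up to the pole: using the explicit separable eigenfunction $\varphi_{\varepsilon,\eps{\rho}}(x)\propto\kappa_1(x)/((1+\varepsilon)(1+k\eps{\rho})-2a(x))$ and the corresponding left eigenfunction, the stationarity relation $\eps{c}\integral g_\varepsilon\varphi_{\varepsilon,\eps{\rho}}\,\D x=d\eps{\rho}$, and the normalisation $\bar{\rho_1}=\tfrac dp\bar{\rho_0}$, I would show that the residues converge, $r_\varepsilon\to -\tfrac{p(2\bar{a}-1)}{2\bar{a}}=r_0$, and that $\eps{h}\to h_0$ uniformly on $\{\Re\lambda\ge0\}$, thereby forcing the matched singular parts to cancel up to a uniformly small remainder. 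I expect this residue analysis at $\lambda=0$ to be the delicate point of the whole lemma, since it is exactly the mechanism that prevents the unavoidable singularity at the origin from destroying uniformity over all of $D$.
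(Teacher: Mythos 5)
Your rank-one collapse $\eps{\omega}(\lambda)=1+F\bigl(R(\eps{\tilde{A}},\lambda)b_\varepsilon\bigr)$, the blockwise resolvent, the large-$|\lambda|$ estimate, and the middle-annulus limit via $\eps{\rho}\to\bar{\rho_0}$ and $\eps{u}\rightharpoonup^*\bar{\rho}_1\delta_{\bar{x}}$ are all sound, and your residue value $r_0=-\tfrac{p(2\bar{a}-1)}{2\bar{a}}$ is correct. The genuine gap is in your third regime. Uniform convergence on the punctured set $\{0<|\lambda|\le\eta,\ \Re\lambda\ge 0\}$ of two functions with simple poles at $0$ would require
\[
\sup_{0<|\lambda|\le\eta}\Bigl|\frac{r_\varepsilon-r_0}{\lambda}\Bigr|\longrightarrow 0,
\]
and for any fixed $\varepsilon$ with $r_\varepsilon\neq r_0$ this supremum is $+\infty$. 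So mere convergence of residues $r_\varepsilon\to r_0$ cannot ``force the matched singular parts to cancel up to a uniformly small remainder''; you would need $r_\varepsilon=r_0$ \emph{exactly} for all small $\varepsilon$, which does not hold generically (the $\varepsilon$-residue involves integrals of $\eps{u}$ against $x$-dependent weights, the limit residue only the values at $\bar{x}$). The statement as used downstream does not require this: hypothesis \eqref{ass:mut:det} of Theorem \ref{thm:mut:stability} asks for uniform convergence on \emph{compact subsets} of $D$, and every compact subset of $D=\{\Re\lambda\ge 0,\ \lambda\neq 0\}$ has positive distance from the origin, so your delicate near-pole regime is both unnecessary and, as written, unsalvageable. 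Your first two regimes together with the holomorphy argument already cover what is needed.

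Two further points of comparison with the paper's proof. First, the paper proceeds by a triangle-inequality splitting: the difference coming from replacing $R(\eps{\tilde{A}},\lambda)$ by $R(\tilde{A}_0,\lambda)$ (with $\eps{S}$ fixed) is handled by quoting the operator-norm convergence $\bigl\| \eps{S}(\eps{\tilde{A}}-\lambda)^{-1}_{|R(\eps{S})}-\eps{S}(\tilde{A}_0-\lambda)^{-1}_{|R(\eps{S})}\bigr\|_{\infty}\to 0$ from the proof of Proposition 1 in \cite{silvia2}, while the difference coming from replacing $\eps{S}$ by $S_0$ is computed explicitly exactly as you do and passed to the limit by weak$^*$ convergence; holomorphy is taken from the definition of the Weinstein--Aronszajn determinant in \cite{kato}. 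Second, your use of separability $\kappa(x,y)=\kappa_1(x)\kappa_2(y)$ plus a Sherman--Morrison formula for $R(\lambda,C_{\varepsilon,\eps{\rho}})$ is heavier machinery than this lemma requires: the paper's own remark after Lemma \ref{lem:mut:liminf} stresses that separability is needed \emph{only} for the explicit resolvent computation there, and this convergence lemma holds under Assumption \ref{ass:mut:2} alone. Dropping your regime-three residue analysis and replacing the Sherman--Morrison step by a direct estimate (or the citation to \cite{silvia2}) would turn your proposal into a correct proof of the statement in the form in which it is actually applied.
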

\begin{proof}
	In order to prove convergence, we estimate
	\begin{align*}
	\left| \eps{\omega}(\lambda) - \omega_0(\lambda)\right| &\leq \left| \det\left( Id + \eps{S}R(\eps{\tilde{A}},\lambda)_{|			rg(\eps{S})}\right) - \det\left( Id + \eps{S}R(\tilde{A}_0,					\lambda)_{|rg(\eps{S})}\right)\right| \\
	&  + \left| \det\left( Id + \eps{S}R(\tilde{A}_0,\lambda)_{|				rg(\eps{S})}\right) - \det\left( Id + S_0R(\tilde{A}_0,\lambda)_{|			rg(S_0)}\right)\right|.
	\end{align*}
	For the first term on the right-hand side it can be shown, in the same way as in \cite[proof of Proposition $1$]{silvia2} that
	\[\left\| \eps{S}(\eps{\tilde{A}} - \lambda)^{-1}_{|R(\eps{S})} - 			\eps{S}(\tilde{A}_0-\lambda)^{-1}_{|R(\eps{S})} \right\|_{\infty} 			\stackrel{\varepsilon\rightarrow 0}{\longrightarrow} 0.\]

	It remains to prove that 
	\[\left| \det\left( Id + \eps{S}R(\tilde{A}_0,\lambda)_{|					R(\eps{S})}\right) - \det\left( Id + S_0R(\tilde{A}_0,\lambda)_{|			R(S_0)}\right)\right| \stackrel{\varepsilon\rightarrow 0}					{\longrightarrow} 0.\]
	For this purpose we compute the determinant explicitly. The basis of 	$rg(\eps{S})$ is given by \eqref{eq:mut:basis}. Then, a direct computation yields
	\begin{eqnarray*}
		\eps{S}R(\tilde{A}_0,\lambda)_{|R(\eps{S})} &=& -\frac{1}					{d+\lambda} \integral \frac{2a(x)kp\eps{u}(x)}								{(1+k\eps{\rho})^2}\,\D x \\
		& &  + \frac{1}{d+\lambda} \integral 					2\left( 			1- \frac{a(x)}{1+k\bar{\rho_0}}\right)p \frac{1}								{\left(\frac{2a(x)}{1+k\bar{\rho_0}} - 1\right)p - \lambda} 					\frac{2a(x)kp\eps{u}(x)}{(1+k\eps{\rho})^2}\,\D x.
	\end{eqnarray*}
	According to Theorem \ref{thm:mut:conv}, we know that $\eps{\rho}$ 			converges strongly to $\bar{\rho_0}$ in $\R$ and $\eps{u}$ converges 			weakly$^*$ to $\bar{\rho_1}\delta_{\bar{x}}$ in $\mathcal{M}^+(\Omega)$. Hence, 
	\begin{eqnarray*}
		\eps{S}R(\tilde{A}_0,\lambda)_{|R(\eps{S})}								&\xrightarrow{\varepsilon\rightarrow 0}& -\frac{1}							{d+\lambda} \frac{2\bar{a}kp\bar{\rho_1}}{(1+k\bar{\rho_0})^2} - \frac{1}					{\lambda(d+\lambda)} 2\left( 1 - \frac{\bar{a}}								{1+k\bar{\rho_0}}\right)p\frac{2\bar{a}kp\bar{\rho_1}}										{(1+k\bar{\rho_0})^2} \\ 
		&=& S_{0}R(\tilde{A}_0,\lambda)_{|R(S_{0})}.
	\end{eqnarray*}
	By definition of the Weinstein-Aronszajn determinant, $\eps{\omega}			(\lambda)$ and $\omega_0(\lambda)$ are holomorphic in $D$, see 				\cite[p. 245]{kato}. 
\end{proof}
\subsection{Boundedness of $\eps{S}R(\lambda,\eps{\tilde{A}})$}
\begin{lemma}
	There exists a constant $L>0$ such that for all $\left| 					\lambda\right| > L$ 
	\[\left\| \eps{S} R(\lambda,\eps{\tilde{A}}) \right\|_{\infty} < 			\frac12.\]
\end{lemma}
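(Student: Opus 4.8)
The plan is to exploit two structural facts: that $\eps{\tilde{A}}$ is a \emph{bounded} operator on $\R\times L^1(\Omega)$, and that $\eps{S}$ has one-dimensional range. First I would check that $\eps{\tilde{A}}$ is bounded by inspecting the four blocks in \eqref{eq:mut:aeps}: the scalar $-d$; the functional $\integral 2\big(1-\tfrac{a(x)}{1+k\eps{\rho}}\big)p\cdot\,\D x$ from $L^1(\Omega)$ to $\R$; the multiplication operator by the bounded function $\big(\tfrac{2a(x)}{1+k\eps{\rho}}-(1+\varepsilon)\big)p$; and the integral operator $\varepsilon p\integral\kappa(x,y)\cdot\,\D y$ on $L^1(\Omega)$. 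Each is bounded, so $\eps{\tilde{A}}$ is bounded, and its norm is controlled uniformly in $\varepsilon$ for $\varepsilon$ small: indeed $\eps{\rho}\to\bar{\rho_0}>0$ by Proposition \ref{prop:mut:rhoconv}, $a\in C^1(\overline{\Omega})$ is bounded, and $\|\varepsilon p\integral\kappa(x,y)\cdot\,\D y\|_\infty\le \varepsilon p\,\|\kappa\|_\infty|\Omega|\to0$.

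Because $\eps{\tilde{A}}$ is bounded, its spectrum lies in the disk of radius $\|\eps{\tilde{A}}\|_\infty$, so for $|\lambda|>\|\eps{\tilde{A}}\|_\infty$ the resolvent is given by the Neumann series and satisfies the elementary estimate
\[\left\|R(\lambda,\eps{\tilde{A}})\right\|_\infty \le \frac{1}{|\lambda|-\|\eps{\tilde{A}}\|_\infty}.\]
The point is that this bound holds for every such $\lambda$ regardless of $\Re\lambda$, so there is no issue of $\lambda$ approaching the spectrum for large modulus. Next I would estimate $\|\eps{S}\|_\infty$. Reading off \eqref{eq:mut:basis}, the operator $\eps{S}$ sends $(\bar{\rho},\bar{u})$ to $\bar{\rho}$ times the first vector in \eqref{eq:mut:basis} and therefore depends only on the scalar component; a short computation with the product norm on $\R\times L^1(\Omega)$ shows that $\|\eps{S}\|_\infty$ is a fixed multiple of $\integral\frac{2a(x)kp\eps{u}(x)}{(1+k\eps{\rho})^2}\,\D x$. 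This quantity is bounded uniformly in $\varepsilon$, since $a$ is bounded, $\eps{\rho}\to\bar{\rho_0}$, and $\|\eps{u}\|_{L^1(\Omega)}=\eps{c}\to\bar{\rho_1}$ by Theorem \ref{thm:mut:conv}.

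Combining the two estimates by submultiplicativity gives
\[\left\|\eps{S}R(\lambda,\eps{\tilde{A}})\right\|_\infty \le \frac{\|\eps{S}\|_\infty}{|\lambda|-\|\eps{\tilde{A}}\|_\infty},\]
which is smaller than $\tfrac12$ as soon as $|\lambda|>2\|\eps{S}\|_\infty+\|\eps{\tilde{A}}\|_\infty$. Setting $L:=2\sup_{\varepsilon}\|\eps{S}\|_\infty+\sup_{\varepsilon}\|\eps{\tilde{A}}\|_\infty$, finite by the uniform bounds established above, proves the claim for all sufficiently small $\varepsilon$, which is exactly the form of hypothesis \eqref{ass:mut:bound} needed in Theorem \ref{thm:mut:stability}.

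I do not expect a genuine obstacle here: the resolvent decay is immediate once $\eps{\tilde{A}}$ is recognised as bounded. The only step requiring care is the bookkeeping of the uniform-in-$\varepsilon$ operator-norm bounds on the product space $\R\times L^1(\Omega)$, in particular making sure that the concentration of $\eps{u}$ toward $\bar{\rho_1}\delta_{\bar{x}}$ does not spoil boundedness of $\|\eps{S}\|_\infty$; but since only the \emph{mass} $\|\eps{u}\|_{L^1(\Omega)}$ enters the norm, its convergence to $\bar{\rho_1}$ suffices.
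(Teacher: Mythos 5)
Your proof is correct and follows essentially the same route as the paper: a Neumann-series bound $\left\|R(\lambda,\eps{\tilde{A}})\right\|_{\infty}\leq \left(\left|\lambda\right|-\|\eps{\tilde{A}}\|_{\infty}\right)^{-1}$ for $\left|\lambda\right|$ large, combined with submultiplicativity and uniform-in-$\varepsilon$ bounds on $\|\eps{S}\|_{\infty}$ and $\|\eps{\tilde{A}}\|_{\infty}$. The only difference is that you justify the uniform bounds explicitly (via $\eps{\rho}\to\bar{\rho_0}$ and $\|\eps{u}\|_{L^1(\Omega)}=\eps{c}\to\bar{\rho}_1$), which the paper simply asserts; this is a welcome elaboration, not a different method.
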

\begin{proof}
	 Since $\sup_{\varepsilon<\varepsilon_0}\left\| 							\eps{\tilde{A}}\right\|_{\infty}$ and 										$\sup_{\varepsilon<\varepsilon_0}\left\|\eps{S}\right\|_{\infty}$ 			are bounded, we obtain for $\left|\lambda\right| > 2\left\| 					\eps{\tilde{A}} \right\|_{\infty}$
	\[\left\| \eps{S}R(\lambda, \eps{\tilde{A}}) \right\|_{\infty} = 			\left\| \eps{S} \lambda^{-1} \sum\limits_{n=0}^{\infty} 					\left(\lambda^{-1} 			\eps{\tilde{A}}\right)^n \right\|				_{\infty} \leq \frac{\left\| \eps{S} \right\|_{\infty}}{\lambda - 			\left\|\eps{\tilde{A}}\right\|_{\infty}} \leq \frac{2\left\| 				\eps{S}\right\|_{\infty}}{\left| \lambda\right|}.\]
	Choosing $L > \max\left\{ 2\left\| \eps{\tilde{A}}\right\|_{\infty}, 			4\left\| 			\eps{S} \right\|_{\infty}\right\}$ leads to the assertion.
\end{proof}
\subsection{Proof of hypotheses \eqref{ass:mut:lim} (excluding $0$ and values with small positive real part from the spectrum)}
\begin{lemma}\label{lem:mut:limits}
	For the steady state $z_{\varepsilon}:=(\rho_{\varepsilon},u_{\varepsilon}(x))$ of system \eqref{eq:mut:sys2}, it holds
	\begin{eqnarray}
		F\left( P_{\varepsilon} S_{\varepsilon}z_{\varepsilon}\right) 				\neq 0.
	\end{eqnarray}
\end{lemma}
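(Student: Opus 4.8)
The plan is to make the rank-one projection $P_\varepsilon$ completely explicit and then evaluate $F$ on it. The first task is to identify the right and left eigenvectors of $\tilde{A}_\varepsilon$ associated to the eigenvalue $0$. Because $\tilde{A}_\varepsilon$ is block upper-triangular, with bottom-right block $C_{\varepsilon,\rho_\varepsilon}$ and top-left entry $-d$, a vector $(\xi,\eta)\in\R\times L^1(\Omega)$ lies in $\ker\tilde{A}_\varepsilon$ iff $C_{\varepsilon,\rho_\varepsilon}\eta = 0$ and $-d\xi + \int_\Omega 2(1-\tfrac{a(x)}{1+k\rho_\varepsilon})p\,\eta(x)\,\D x = 0$. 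Since $\ker C_{\varepsilon,\rho_\varepsilon}$ is spanned by $u_\varepsilon$ and the steady-state relation (the first equation of \eqref{eq:mut:steady}) gives $\int_\Omega 2(1-\tfrac{a(x)}{1+k\rho_\varepsilon})p\,u_\varepsilon(x)\,\D x = d\rho_\varepsilon$, I obtain that the right eigenvector is exactly $z_\varepsilon = (\rho_\varepsilon,u_\varepsilon)$.

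Next I would compute the adjoint. A direct computation shows $\tilde{A}_\varepsilon^{*}(\psi_0,\psi) = (-d\psi_0,\; \psi_0 b_\varepsilon + C_{\varepsilon,\rho_\varepsilon}^{*}\psi)$ with $b_\varepsilon(x):=2(1-\tfrac{a(x)}{1+k\rho_\varepsilon})p$, so an eigenvector for $0$ forces $\psi_0=0$ and leaves $\psi=\psi_\varepsilon$, the positive eigenfunction of $C_{\varepsilon,\rho_\varepsilon}^{*}$ for its dominant eigenvalue $0$; positivity of $\psi_\varepsilon$ follows by applying the Krein--Rutman argument of Proposition \ref{prop:mut:eigenvalue} to the (equally positive, irreducible) dual operator. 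The spectral projection onto the algebraically simple eigenspace then takes the standard form
\[
P_\varepsilon(\bar\rho,\bar u) = \frac{\int_\Omega \psi_\varepsilon\,\bar u\,\D x}{\int_\Omega \psi_\varepsilon\,u_\varepsilon\,\D x}\,(\rho_\varepsilon,u_\varepsilon),
\]
the normalising denominator being strictly positive because both $\psi_\varepsilon$ and $u_\varepsilon$ are strictly positive.

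Finally I would evaluate on $S_\varepsilon z_\varepsilon$. Writing $w_\varepsilon(x):=\tfrac{2a(x)kp\,u_\varepsilon(x)}{(1+k\rho_\varepsilon)^2}$, the definition of $S_\varepsilon$ gives $S_\varepsilon z_\varepsilon = (\rho_\varepsilon\int_\Omega w_\varepsilon\,\D x,\; -\rho_\varepsilon w_\varepsilon)$, whose only datum relevant for $P_\varepsilon$ is its $L^1$-component $-\rho_\varepsilon w_\varepsilon$. Substituting into the projection formula and applying $F$ (which reads off the $\R$-component) yields
\[
F(P_\varepsilon S_\varepsilon z_\varepsilon) = -\,\rho_\varepsilon^{2}\,\frac{\int_\Omega \psi_\varepsilon\,w_\varepsilon\,\D x}{\int_\Omega \psi_\varepsilon\,u_\varepsilon\,\D x}.
\]
Since $\rho_\varepsilon>0$, $\psi_\varepsilon>0$, and $w_\varepsilon>0$ almost everywhere, every factor is strictly positive, so this quantity is strictly negative and in particular nonzero.

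The two positivity facts that close the argument are that the left eigenfunction $\psi_\varepsilon$ is strictly positive (needed for both $\int\psi_\varepsilon u_\varepsilon>0$ and $\int\psi_\varepsilon w_\varepsilon>0$) and that $0$ is algebraically simple for $\tilde{A}_\varepsilon$ (so the rank-one projection formula is valid). I expect the former to be the main obstacle: it is not immediate from Proposition \ref{prop:mut:eigenvalue} as stated, and I would need either to invoke Krein--Rutman for the dual operator $C_{\varepsilon,\rho_\varepsilon}^{*}$ or to verify irreducibility of the adjoint directly; the algebraic simplicity is then inherited from that of $C_{\varepsilon,\rho_\varepsilon}$ through the triangular block structure together with $-d<0$.
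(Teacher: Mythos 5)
Your proof is correct and follows essentially the same route as the paper's: both hinge on computing the adjoint null eigenvector, observing that its $\R$-component vanishes, invoking irreducibility/Krein--Rutman for the dual operator to get strict positivity of its $L^\infty$-component, and concluding that the relevant pairing $-\rho_\varepsilon\int_\Omega \psi_\varepsilon w_\varepsilon\,\D x$ is strictly negative. The only cosmetic difference is that you write the rank-one spectral projection explicitly via the left eigenvector, while the paper phrases the same fact as $S_\varepsilon z_\varepsilon\notin\mathrm{Range}(\tilde{A}_\varepsilon)$ using the decomposition $\langle z_\varepsilon\rangle\oplus\mathrm{Range}(\tilde{A}_\varepsilon)$; the positivity issue you flag as the main obstacle is resolved in the paper exactly as you anticipate, by irreducibility of the perturbed generator (via Proposition 3.5 in the cited semigroup reference).
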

\begin{proof}
From the definition of $F$ given in formula \eqref{F}, it is sufficient to show that $P_{\varepsilon} S_{\varepsilon}z_{\varepsilon} \neq 0$. Since $0$ is a simple strictly dominant eigenvalue of operator $\tilde{A}_{\varepsilon}$, we can decompose the space $L^{1}(\Omega)= \langle z_{\varepsilon} \rangle \bigoplus \text{Range}(\tilde{A}_{\varepsilon})$ (see Theorem A.3.1 in \cite{clement}). Hence, we have to prove that $S_{\varepsilon}z_{\varepsilon} \notin \text{Range}(\tilde{A}_{\varepsilon})$ what is equivalent to showing that
\begin{equation}
\label{adjoint}
\Big\langle \left(\begin{array}{c} \rho_{\varepsilon}^{*} \\
u_{\varepsilon}^{*}
\end{array}\right), S_{\varepsilon}\left(\begin{array}{c} \rho_{\varepsilon} \\
u_{\varepsilon}
\end{array}\right) \Big\rangle \neq 0,
\end{equation}
where $(\rho_{\varepsilon}^{*}, u_{\varepsilon}^{*})$ is the eigenfunction corresponding to the eigenvalue $0$ of the adjoint operator $\tilde{A}^{*}_{\varepsilon}$. The adjoint operator reads
$$
\begin{array}{rcl}
\tilde{A}_{\varepsilon}&=& \left(\begin{array}{cc}-d & 0  \\2\left( 1- \frac{a(x)}										{1+k\rho_{\varepsilon}}\right)p &  \left( \frac{2a(x)}{1+k\rho_{\varepsilon}}-(1+ 						\varepsilon)\right)p  + 		\varepsilon p\integral 	\kappa(y,x)\cdot\,\D y\end{array}\right)
\end{array}
$$
and we obtain that $\rho_{\varepsilon}^{*}=0$. This implies that $u_{\varepsilon}^{*}$ is an eigenfunction corresponding the the zero eigenvalue of the operator $\left( \frac{2a(x)}{1+k\rho_{\varepsilon}}-(1+ 						\varepsilon)\right)p \cdot + 		\varepsilon p\integral 	\kappa(y,x)\cdot$ which is the adjoint operator of $B_{\varepsilon,\rho}+K_{\varepsilon}$ defined by formulas \eqref{operators}. This operator is a generator of an irreducible positive semigroup in the Banach lattice $L^{1}(\Omega)$, as it is the perturbation by an irreducible operator of the generator of a positive semigroup. By Proposition 3.5 in \cite{nagel} we obtain that $u_{\varepsilon}^{*}$ is strictly positive, which, together with the fact that $\rho_{\varepsilon}^{*}=0$, implies that 
$$
\Big\langle \left(\begin{array}{c} \rho_{\varepsilon}^{*} \\
u_{\varepsilon}^{*}
\end{array}\right), S_{\varepsilon}\left(\begin{array}{c} \rho_{\varepsilon} \\
u_{\varepsilon}
\end{array}\right) \Big\rangle=-\int_{\Omega}\frac{2a(x)kpu_{\varepsilon}(x)\rho_{\varepsilon}u_{\varepsilon}^{*}(x)}{(1+k\rho_{\varepsilon})^{2}} \neq 0
$$

	\end{proof}
	The last step is to show
	\begin{lemma}\label{lem:mut:liminf}
	For the steady state $\eps{z}$, it holds
	\begin{align}\label{eq:mut:liminf}
	\liminf\limits_{(\varepsilon,\lambda)\rightarrow (0^+,0)} \lambda 			F \left((\tilde{A}				_{\varepsilon} - \lambda)^{-1} 				S_{\varepsilon}z_{\varepsilon}\right) \neq 0.
	\end{align}
	\end{lemma}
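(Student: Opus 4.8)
The plan is to exploit the block upper-triangular structure of $\tilde{A}_{\varepsilon}$ together with the fact that $0$ is an algebraically simple, strictly dominant eigenvalue of the lower-right block $C_{\varepsilon}:=C_{\varepsilon,\rho_{\varepsilon}}=B_{\varepsilon,\rho_{\varepsilon}}+K_{\varepsilon}$, so that the resolvent of that block has a \emph{simple pole} at $\lambda=0$. First I would solve the resolvent equation explicitly. Writing $S_{\varepsilon}z_{\varepsilon}=\rho_{\varepsilon}(m_{\varepsilon},-w_{\varepsilon})^{\mathsf T}$ with $w_{\varepsilon}(x):=\tfrac{2a(x)kp\,u_{\varepsilon}(x)}{(1+k\rho_{\varepsilon})^{2}}$ and $m_{\varepsilon}:=\integral w_{\varepsilon}(x)\,\D x$, and denoting the off-diagonal functional by $L_{\varepsilon}\phi:=\integral 2\bigl(1-\tfrac{a(x)}{1+k\rho_{\varepsilon}}\bigr)p\,\phi(x)\,\D x$, the triangular structure of $\tilde A_\varepsilon-\lambda$ in \eqref{eq:mut:aeps} gives $\eta=-\rho_{\varepsilon}(C_{\varepsilon}-\lambda)^{-1}w_{\varepsilon}$ and hence
\[
F\bigl((\tilde{A}_{\varepsilon}-\lambda)^{-1}S_{\varepsilon}z_{\varepsilon}\bigr)=\frac{-\rho_{\varepsilon}}{d+\lambda}\Bigl(m_{\varepsilon}+L_{\varepsilon}(C_{\varepsilon}-\lambda)^{-1}w_{\varepsilon}\Bigr).
\]
Thus the whole problem reduces to the scalar quantity $L_{\varepsilon}(C_{\varepsilon}-\lambda)^{-1}w_{\varepsilon}$.

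Next I would insert the Laurent expansion at the simple pole, $(C_{\varepsilon}-\lambda)^{-1}=-\lambda^{-1}P_{\varepsilon}+R_{\varepsilon}(\lambda)$, where $P_{\varepsilon}\phi=\tfrac{\langle u_{\varepsilon}^{*},\phi\rangle}{\langle u_{\varepsilon}^{*},u_{\varepsilon}\rangle}\,u_{\varepsilon}$ is the spectral projection onto $\ker C_{\varepsilon}$ (with $u_{\varepsilon}^{*}>0$ the adjoint eigenfunction already produced in the proof of Lemma \ref{lem:mut:limits}) and $R_{\varepsilon}(\lambda)$ is the reduced resolvent, holomorphic near $0$. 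The crucial simplification is the steady-state identity $L_{\varepsilon}u_{\varepsilon}=d\rho_{\varepsilon}$, which is exactly the first equation of \eqref{eq:mut:steady}; it yields $L_{\varepsilon}P_{\varepsilon}w_{\varepsilon}=d\rho_{\varepsilon}\tfrac{\langle u_{\varepsilon}^{*},w_{\varepsilon}\rangle}{\langle u_{\varepsilon}^{*},u_{\varepsilon}\rangle}$, so that
\[
\lambda F\bigl((\tilde{A}_{\varepsilon}-\lambda)^{-1}S_{\varepsilon}z_{\varepsilon}\bigr)=\frac{\rho_{\varepsilon}}{d+\lambda}\Bigl(d\rho_{\varepsilon}\frac{\langle u_{\varepsilon}^{*},w_{\varepsilon}\rangle}{\langle u_{\varepsilon}^{*},u_{\varepsilon}\rangle}-\lambda m_{\varepsilon}-\lambda L_{\varepsilon}R_{\varepsilon}(\lambda)w_{\varepsilon}\Bigr).
\]
For fixed $\varepsilon$ the limit $\lambda\to0$ equals $\rho_{\varepsilon}^{2}\tfrac{\langle u_{\varepsilon}^{*},w_{\varepsilon}\rangle}{\langle u_{\varepsilon}^{*},u_{\varepsilon}\rangle}$, which is strictly positive since $u_{\varepsilon},u_{\varepsilon}^{*},a>0$; this is precisely the non-vanishing already secured in Lemma \ref{lem:mut:limits}. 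Passing to $\varepsilon\to0$ with $\rho_{\varepsilon}\to\bar{\rho}_{0}$ and $u_{\varepsilon}\rightharpoonup^{*}\bar{\rho}_{1}\delta_{\bar{x}}$ (Theorem \ref{thm:mut:conv}) and $a\in C^{1}$, the weighted average $\tfrac{\langle u_{\varepsilon}^{*},w_{\varepsilon}\rangle}{\langle u_{\varepsilon}^{*},u_{\varepsilon}\rangle}$ converges to $\tfrac{2\bar{a}kp}{(1+k\bar{\rho}_{0})^{2}}$, so the leading term tends to the strictly positive constant $\bar{\rho}_{0}^{2}\,\tfrac{2\bar{a}kp}{(1+k\bar{\rho}_{0})^{2}}$, while $\lambda m_{\varepsilon}\to0$.

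The main obstacle is the remainder term $\lambda L_{\varepsilon}R_{\varepsilon}(\lambda)w_{\varepsilon}$ in the \emph{joint} limit. The reduced resolvent need not stay bounded as $\varepsilon\to0$, because the spectral gap between the dominant eigenvalue $0$ of $C_{\varepsilon}$ and the rest of its spectrum closes: one has $s(B_{\varepsilon,\rho_{\varepsilon}})\to0^{-}$, so that $\|R_{\varepsilon}(\lambda)\|\lesssim|s(B_{\varepsilon,\rho_{\varepsilon}})|^{-1}$ blows up, and the regime $|\lambda|\sim|s(B_{\varepsilon,\rho_{\varepsilon}})|$ is reachable. Here the separability hypothesis $\kappa(x,y)=\kappa_{1}(x)\kappa_{2}(y)$ of Theorem \ref{thm:mut:realstead} is essential: it makes $K_{\varepsilon}$ a rank-one operator, so that $C_{\varepsilon}$ is a rank-one perturbation of the multiplication operator $B_{\varepsilon,\rho_{\varepsilon}}$ and $(C_{\varepsilon}-\lambda)^{-1}$ admits an explicit Sherman--Morrison representation in terms of the scalar resolvent of $B_{\varepsilon,\rho_{\varepsilon}}$. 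Using this, I would bound $L_{\varepsilon}R_{\varepsilon}(\lambda)w_{\varepsilon}$ explicitly and combine it with $R_{\varepsilon}(\lambda)w_{\varepsilon}=R_{\varepsilon}(\lambda)(I-P_{\varepsilon})w_{\varepsilon}$, where $(I-P_{\varepsilon})w_{\varepsilon}\to0$ because $w_{\varepsilon}$ is asymptotically parallel to the eigenvector $u_{\varepsilon}$ (both $u_{\varepsilon}$ and $u_{\varepsilon}^{*}$ concentrating at $\bar{x}$), to show that $\lambda L_{\varepsilon}R_{\varepsilon}(\lambda)w_{\varepsilon}\to0$ uniformly for $\lambda\in D$ near $0$. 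Once this remainder is shown to be negligible, the strictly positive leading term survives and the $\liminf$ is bounded below by $\bar{\rho}_{0}^{2}\,\tfrac{2\bar{a}kp}{(1+k\bar{\rho}_{0})^{2}}>0$, establishing \eqref{eq:mut:liminf}.
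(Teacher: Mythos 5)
Your reduction and your leading term are both correct, and they reproduce the paper's own answer. The triangular structure of $\tilde{A}_{\varepsilon}$ gives exactly the scalar quantity the paper works with; writing $(C_{\varepsilon,\rho_{\varepsilon}}-\lambda)^{-1}=-\lambda^{-1}P_{\varepsilon}+R_{\varepsilon}(\lambda)$ and using $L_{\varepsilon}u_{\varepsilon}=d\rho_{\varepsilon}$, your limit $\bar{\rho}_{0}^{2}\,\frac{2\bar{a}kp}{(1+k\bar{\rho}_{0})^{2}}$ agrees with the value $\frac{1}{d}\bar{\rho}_{1}H(\bar{x})\beta(\bar{x})$ that the paper extracts from its term $I$ (recall $\bar{\rho}_{1}=\frac{d}{p}\bar{\rho}_{0}$ and $H(\bar{x})=p$ because $1+k\bar{\rho}_{0}=2\bar{a}$). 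Your organization — spectral projection plus reduced resolvent, rather than the paper's direct substitution of the explicit resolvent \eqref{eq:resolvent} and the splitting $\Xi=I+II+III$ — is a legitimate variant of the same strategy, and you correctly identify both the need for separability of $\kappa$ and the reason the joint limit is delicate (the spectral gap of $C_{\varepsilon,\rho_{\varepsilon}}$ closes as $\varepsilon\to 0$).

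The genuine gap is that the decisive estimate, the vanishing of the remainder $\lambda L_{\varepsilon}R_{\varepsilon}(\lambda)w_{\varepsilon}$ in the joint limit, is only announced, and the plan you sketch for it does not go through as stated. The factorization $|\lambda|\,\bigl\|R_{\varepsilon}(\lambda)|_{\mathrm{Range}(I-P_{\varepsilon})}\bigr\|\,\bigl\|(I-P_{\varepsilon})w_{\varepsilon}\bigr\|_{L^{1}}$ requires $|\lambda|\,\bigl\|R_{\varepsilon}(\lambda)|_{\mathrm{Range}(I-P_{\varepsilon})}\bigr\|$ to be bounded uniformly as $(\varepsilon,\lambda)\to(0^{+},0)$; since $C_{\varepsilon,\rho_{\varepsilon}}$ is a non-normal operator (a rank-one perturbation of a multiplication operator) whose gap closes, this does not follow from the distance of $\lambda$ to $\sigma(C_{\varepsilon,\rho_{\varepsilon}})\setminus\{0\}$ and must be extracted from the explicit formula. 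Doing so, one finds for $h$ with $\langle u_{\varepsilon}^{*},h\rangle=0$ (writing $L\phi=p\int_{\Omega}\kappa_{2}\phi$ and using $1=\varepsilon L(\kappa_{1}/\alpha_{\varepsilon})$)
\begin{equation*}
R_{\varepsilon}(\lambda)h=-\frac{h}{\alpha_{\varepsilon}+\lambda}+\frac{\kappa_{1}}{\alpha_{\varepsilon}+\lambda}\cdot\frac{L\bigl(\frac{h}{\alpha_{\varepsilon}(\alpha_{\varepsilon}+\lambda)}\bigr)}{L\bigl(\frac{\kappa_{1}}{\alpha_{\varepsilon}(\alpha_{\varepsilon}+\lambda)}\bigr)},
\end{equation*}
and while the first piece is indeed controlled by $\|h\|_{L^{1}}$ after multiplication by $\lambda$, the second piece is a ratio of two integrals whose weights both blow up at $\bar{x}$ — with \emph{different} powers of $\alpha_{\varepsilon}$ once you insert $h=(I-P_{\varepsilon})w_{\varepsilon}=(c-\bar{c}_{\varepsilon})\,\varepsilon\frac{\kappa_{1}}{\alpha_{\varepsilon}}Lu_{\varepsilon}$ via the steady-state identity \eqref{eq:useful}, where $c(x)=\frac{2a(x)kp}{(1+k\rho_{\varepsilon})^{2}}$ and $\bar{c}_{\varepsilon}=\langle u_{\varepsilon}^{*},w_{\varepsilon}\rangle/\langle u_{\varepsilon}^{*},u_{\varepsilon}\rangle$. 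Proving that this ratio vanishes uniformly for $\Re\lambda\geq 0$ in the joint limit is precisely the two-singular-integral comparison that the paper carries out as the limits of its terms $II$ and $III$, using the normalized Dirac sequence $g_{\varepsilon}$; it is the actual content of the lemma, not a routine estimate. Until that computation is done, your argument assumes the conclusion at its hardest point; once it is done, your proof becomes a reorganized version of the paper's.
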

	\begin{proof}
	We start by computing the resolvent operator $R(						\eps{\lambda,\tilde{A}})$, 
	\[(\eps{\tilde{A}}-\lambda)^{-1} = \begin{pmatrix}
	\frac{-1}{d+\lambda} & \frac{1}{d+\lambda} \integral 2\left(1 - 			\frac{a(x)}{1+k\eps{\rho}}\right)p R(\lambda,C_{\varepsilon,				\eps{\rho}})\cdot \,\D x \\
	0 & R(\lambda,C_{\varepsilon,\eps{\rho}})
	\end{pmatrix},\]
	where recall that $C_{\varepsilon,\eps{\rho}}$ is defined in \eqref{operators}.
	Condition \eqref{eq:mut:liminf} reads
	$$
	\begin{array}{rcl}
	&\liminf\limits_{(\varepsilon,\lambda)\rightarrow (0^+,0)} \lambda 			F\left( (\eps{\tilde{A}}-\lambda)^{-1}\eps{S}\begin{pmatrix}
	\eps{\rho} \\ \eps{u}
	\end{pmatrix}\right)& \\
	= &\liminf\limits_{(\varepsilon,\lambda)\rightarrow 						(0^+,0)}\frac{-\lambda}{d+\lambda} \Big(\integral 						2\left(1 - \frac{a(x)}{1+k\eps{\rho}}\right)p 			R(\lambda,C_{\varepsilon,\eps{\rho}}) 										\frac{2a(x)kp\eps{u}\eps{\rho}}{(1+k\eps{\rho})^2} \,\D x & \\
& +  \integral 	\frac{2a(x)kp\eps{u}\eps{\rho}}{(1+k\eps{\rho})^2}\,			\D x\Big)  \neq 0.	 &
	\end{array}
	$$

Since the limit of the second term is zero, condition \eqref{eq:mut:liminf} becomes

	\[\liminf\limits_{(\varepsilon,\lambda) \rightarrow (0^+,0)} 				\frac{-1}{d+\lambda} \integral 						2\left(1 - 				\frac{a(x)}	{1+k\eps{\rho}}\right)p \lambda										R(\lambda,C_{\varepsilon,\eps{\rho}}) 										\frac{2a(x)kp\eps{u}\eps{\rho}}{(1+k\eps{\rho})^2} \,\D x \neq 0\]
 Determination of the limit 	is difficult. Since $0$ is an eigenvalue of $C_{\varepsilon,					\eps{\rho}}$, the limiting behaviour of $\lambda 							R(\lambda,C_{\varepsilon,\eps{\rho}})$ for $\lambda$ tending to zero 	is not obvious, because the resolvent tends to infinity ($C_{\varepsilon,\eps{\rho}}$ tends to a multiplication operator), while $\lambda$ tends to zero.  \\
However, separation of variables of the kernel $\kappa$ allows an explicit derivation of the resolvent $R(\lambda,C_{\varepsilon,\eps{\rho}})$ which facilitates the computation of the previous limit. Under this assumption, we write
\[C_{\varepsilon,\eps{\rho}}u= \left( \frac{2a(x)}{1+k\eps{\rho}} -(1+ \varepsilon)\right)p u + \varepsilon p \kappa_1(x)\integral \kappa_2(y)u(y)\,\D y = -\alpha_{\varepsilon}(x) u + \varepsilon\kappa_1(x)Lu,\]
where $\alpha_{\varepsilon}(x) := \left(1+ \varepsilon- \frac{2a(x)}{1+k\eps{\rho}}\right)p > 0$, since $\eps{\rho}$ is the first component of the steady state of system \eqref{eq:mut:sys2}, and $Lu = p\integral \kappa_2(y)u(y)\,\D y$. Following the scheme proposed in \cite[Section $4.2$]{calsina07} for the explicit computation of the resolvent operator $R(\lambda,C_{\varepsilon,\eps{\rho}})$, we obtain
\begin{align}\label{eq:resolvent}
R(\lambda,C_{\varepsilon,\eps{\rho}})g &= \frac{1}{\varepsilon\lambda} L\left(\frac{\kappa_1(x)}{\alpha_{\varepsilon}(x)		(\alpha_{\varepsilon}(x)+\lambda)}\right)^{-1}\Biggl[ -(\alpha_{\varepsilon}(x)+\lambda)^{-1} 			g\\
& + \varepsilon(\alpha_{\varepsilon}(x)+\lambda)^{-1} g 									L\left((\alpha_{\varepsilon}(x)+\lambda)^{-1}\kappa_1(x)\right) \\
	  &- \varepsilon(\alpha_{\varepsilon}(x)+\lambda)^{-1} \kappa_1(x) 						L\left((\alpha_{\varepsilon}(x)+ \lambda)^{-1} g\right)\Biggr].
\end{align}
	Let us define 
	\[\eps{\beta}(x) := \frac{2a(x)kp\eps{\rho}}									{(1+k\eps{\rho})^2},\; \eps{H}(x) := 2\left(1 - \frac{a(x)}					{1+k\eps{\rho}}\right)p,\]
	in order to shorten the notational effort. Note that
	\[\lim\limits_{\varepsilon\rightarrow 0} \eps{H}(x) = 2\left( 1 - 			\frac{a(x)}{1+k\bar{\rho_0}}\right)p =: H(x),\quad 							\lim\limits_{\varepsilon\rightarrow 0} \eps{\beta}(x) = 					\frac{2a(x)kp\bar{\rho_0}}{(1+k\bar{\rho_0})^2} =: \beta(x).\]
	We need to determine the following limiting process
	\begin{eqnarray*}
		\liminf\limits_{(\varepsilon,\lambda) \rightarrow (0^+,0)} 					\integral\eps{H}(x)\lambda R(\lambda,C_{\varepsilon,\eps{\rho}}) 		\eps{\beta}(x)\eps{u}(x)\,\D x =:\liminf\limits_{(\varepsilon,				\lambda)\rightarrow (0^+,0)} \Xi(\varepsilon,\lambda).
	\end{eqnarray*}
	Substituting the expression of the resolvent 					operator derived in \eqref{eq:resolvent}, we obtain
	\begin{eqnarray*}
	\Xi(\varepsilon,\lambda) &=& \integral \eps{H}(x)\frac{1}					{\varepsilon} L\left( \frac{\kappa_1(y)}{\alpha_{\varepsilon}(y)							(\alpha_{\varepsilon}(y)+\lambda)}\right)^{-1}\cdot \\
	& &  \Biggl[ -(\alpha_{\varepsilon}(x)+\lambda)^{-1} 			\eps{\beta}					(x)\eps{u}(x) +	\varepsilon (\alpha_{\varepsilon}(x)+\lambda)^{-1} 										\eps{\beta}(x)\eps{u}(x) L\left( \frac{\kappa_1(y)}								{\alpha_{\varepsilon}(y)+\lambda}\right) \\
	& & - \varepsilon (\alpha_{\varepsilon}(x)+\lambda)^{-1}\kappa_1(x) L\left( 				\frac{\eps{\beta}(y)\eps{u}(y)}{\alpha_{\varepsilon}(y)+\lambda}\right)\Biggr]\,\D 	x.
	\end{eqnarray*}
	For a better distinction between the terms, let us define
	\begin{eqnarray*}
		I &:=& -\integral \frac{1}{\varepsilon}\eps{H}(x) 							L\left(\frac{\kappa_1(y)}{\alpha_{\varepsilon}(y)											(\alpha_{\varepsilon}(y)+\lambda)}\right)^{-1} 											(\alpha_{\varepsilon}(x)+\lambda)^{-1} \eps{\beta}(x)\eps{u}(x)\,\D x, \\
		II &:=& \integral \eps{H}(x)(\alpha_{\varepsilon}(x)+\lambda)^{-1}\eps{\beta}				(x)\eps{u}(x)L\left(\frac{\kappa_1(y)}{\alpha_{\varepsilon}(y)							(\alpha_{\varepsilon}(y)+\lambda)}\right)^{-1} 											 L\left( \frac{\kappa_1(y)}													{\alpha_{\varepsilon}(y)+\lambda}\right) \,\D x, \\
		III &:=& -\integral \eps{H}(x)												(\alpha_{\varepsilon}(x)+\lambda)^{-1}\kappa_1(x) 										L\left(\frac{\kappa_1(y)}{\alpha_{\varepsilon}(y)											(\alpha_{\varepsilon}(y)+\lambda)}\right)^{-1} 											 L\left( \frac{\eps{\beta}(y)\eps{u}(y)}									{\alpha_{\varepsilon}(y)+\lambda}\right)\,\D x, \\
		\Xi(\varepsilon,\lambda) &=& I+II+III.
	\end{eqnarray*}

	Using the steady state equation 
	\begin{align}\label{eq:useful}
	\eps{u} = \varepsilon 						\frac{\kappa_1(x)}				{\alpha_{\varepsilon}(x)}L\eps{u},
	\end{align}
	we obtain
	\begin{eqnarray*}
		I &=& -\integral \eps{H}(x)\frac{\eps{\beta}								(x)\kappa_1(x)L\eps{u}}{\alpha_{\varepsilon}(x)(\alpha_{\varepsilon}(x)+\lambda)} L\left( 				\frac{\kappa_1(y)}{\alpha_{\varepsilon}(y)(\alpha_{\varepsilon}(y)+\lambda)}\right)^{-1}\,\D 		x \\
		&=& -L\eps{u} \integral \eps{H}(x) \frac{\eps{\beta}(x)}						{\kappa_2(x)} \frac{\kappa_1(x)\kappa_2(x)}{\alpha_{\varepsilon}(x)						(\alpha_{\varepsilon}(x)+\lambda)} L\left(\frac{\kappa_1(y)}{\alpha_{\varepsilon}(y)					(\alpha_{\varepsilon}(y)+\lambda)}\right)^{-1}\,\D x.
	\end{eqnarray*}
	The sequence $\eps{g}$ denoted by
	\[\eps{g}(x) := \frac{\kappa_1(x)\kappa_2(x)}{\alpha_{\varepsilon}(x)						(\alpha_{\varepsilon}(x)+\lambda)}L\left(\frac{\kappa_1(y)}{\alpha_{\varepsilon}(y)						(\alpha_{\varepsilon}(y)+\lambda)}\right)^{-1},\]
	defines a Dirac sequence. The definition also guarantees that				%
	\[\forall\,\varepsilon>0:\quad \eps{g}(x) > 0\;\mbox{and}\; 				\integral \eps{g}(x)\,\D x = 1.\]
	Let $\bar{x} = \mathrm{argmax}_{x\in\overline{\Omega}} a(x)$ and take $\Omega^c\subset\Omega$ such that $\bar{x}\notin 				\Omega^c$ and $\mathrm{dist}(\bar{x},\Omega^c)>0$.\\
	It follows from Theorem \ref{thm:mut:conv} that 
	\[\alpha_{\varepsilon}(x) = \left(1+ \varepsilon - \frac{2a(x)}				{1+k\eps{\rho}}\right)p \stackrel{\varepsilon\rightarrow 0}					{\longrightarrow} \left( 1 - \frac{2a(x)}{1+k\bar{\rho_0}}\right)p.\]
	We conclude that $\frac{\kappa_1(x)\kappa_2(x)}				{\alpha_{\varepsilon}(x)(\alpha_{\varepsilon}(x)+\lambda)}$ converges and is subsequently 				bounded on $\Omega^c$. \\
	Then, using $1 = \varepsilon 							L\left(\frac{\kappa_1(x)}{\alpha_{\varepsilon}(x)}\right)$, we estimate
	\begin{align*}
	\int\limits_{\Omega} \frac{\kappa_1(x)\kappa_2(x)}{\alpha_{\varepsilon}(x)				(\alpha_{\varepsilon}(x)+\lambda)}\,\D x &\geq \frac{1}									{\max\limits_{x\in\Omega}(\alpha_{\varepsilon}(x)+\lambda)} 								\int\limits_{\Omega} 														\frac{\kappa_1(x)\kappa_2(x)}{\alpha_{\varepsilon}(x)}\,\D x = \frac{1}					{\max\limits_{x\in\Omega} (\alpha_{\varepsilon}(x)+\lambda)} L\left( 						\frac{\kappa_1}{\alpha_{\varepsilon}}\right) \\
	&= \frac{1}{\varepsilon\max\limits_{x\in\Omega} 							(\alpha_{\varepsilon}(x)+\lambda)} \stackrel{\varepsilon\rightarrow 0}					{\longrightarrow} \infty.
	\end{align*}
	This implies that
	\[ \forall\, \Omega^c\subset \Omega, \bar{x}\notin \Omega^c, 				\mathrm{dist}(\bar{x},\Omega^c)>0:\quad 									\int\limits_{\Omega^c} \eps{g}(x)\,\D x \rightarrow 0\quad\mbox{for} 	\; \varepsilon\rightarrow 0.\]
	Since we additionally know by Theorem \ref{thm:mut:conv} that 				$\eps{u}$ 	converges weakly$^*$, we infer 
	\[L\eps{u} = \integral \kappa_2(y)\eps{u}(y)\,\D y \rightarrow 				\bar{\rho}_1\kappa_2(\bar{x})\quad\mbox{for}\; \varepsilon\rightarrow 0.			\]
	Thus, we obtain
	\[I \stackrel{\varepsilon\rightarrow 0}{\longrightarrow} 					-\bar{\rho}_1H(\bar{x})\beta(\bar{x}) < 0. \]

	Computing the limit for $II$ and $II$ and using equality \eqref{eq:useful}, we obtain

	$$
	\begin{array}{rcl}

		\liminf (II+III)&=& \liminf\limits_{(\varepsilon,\lambda)\rightarrow (0^+,0)} 				\varepsilon\Biggl[ \integral \eps{H}(x) 									\frac{L\eps{u}\eps{\beta}(x)}{\kappa_2(x)} 									\frac{\kappa_1(x)\kappa_2(x)}{\alpha_{\varepsilon}(x)(\alpha_{\varepsilon}(x)+\lambda)} 		\\	& &	L\left(\frac{\kappa_1(y)}{\alpha_{\varepsilon}(y)											(\alpha_{\varepsilon}(y)+\lambda)}\right)^{-1} \cdot 
		\left( L\left(\frac{\kappa_1(y)}{\alpha_{\varepsilon}(y)+\lambda}\right) - 		\frac{\alpha_{\varepsilon}(x)}{\eps{\beta}(x)} 									L\left(\frac{\eps{\beta}(y)\kappa_1(y)}{\alpha_{\varepsilon}(y)							(\alpha_{\varepsilon}(y)+\lambda)}\right)\right)\,\D x 		\Biggr] \\ &=&0

	\end{array}
	$$
	which concludes the proof. 
\end{proof}
\begin{remark}
Note that the separation of variables of $\kappa$ is needed only, because of the explicit computation of the resolvent $R(\lambda,C_{\varepsilon,\eps{\rho}})$. All results up to this point do not need this assumption and work for Assumption \ref{ass:mut:2} alone. 
\end{remark}

\section{Appendix B}
In this appendix we prove that the operators 
\[ \begin{array}{rcl}T^n_{\varepsilon,\rho} u(x)& =& \int\limits_{\Omega^n} 						\kappa(x,y_n)\prod\limits_{i=1}^{n-1} 										\kappa(y_i,y_{i+1})\\ & & \frac{(1+k\rho)^n}{\prod\limits_{i=1}^n 					((1+k\rho)(1+ \varepsilon)-2a(y_i))}u(y_n)\,\D y_1 			\cdots \D y_n.\end{array}\]
defined in the proof of Lemma \ref{lem:mut:specmon} satisfy $\tfrac{\D}					{\D\rho}T^n_{\varepsilon,\rho}<0$ for $\varepsilon$ small enough. The 					differential operator and the integral can be interchanged, because 		of Leibniz' integral rule. This implies denoting by $\D\vec{y}= \D 	y_1 \cdots \D y_n$
	\begin{align*}
		\frac{\D }{\D \rho} (T_{\varepsilon,\rho}^nu)(x) &= \frac{\D}				{\D\rho}\int\limits_{\Omega^n} 												\kappa(x,y_n)\prod\limits_{i=1}^{n-1} 										\kappa(y_i,y_{i+1})\frac{(1+k\rho)^n}{\prod\limits_{i=1}^n 					((1+k\rho)(1+ \varepsilon\hat{\kappa})-2a(y_i))p}u(y_n)\,\D 				\vec{y}\\
		&=\int\limits_{\Omega^n} \kappa(x,y_n)\prod\limits_{i=1}^{n-1} 				\kappa(y_i,y_{i+1})	\frac{\D}{\D\rho} \frac{(1+k\rho)^n}					{\prod\limits_{i=1}^n ((1+k\rho)											(1+\varepsilon\hat{\kappa})-2a(y_i))p} u(y_n)\,\D \vec{y}.
	\end{align*}
	Both $\kappa$ and $u$ are positive functions, so the sign of the 			derivative is solely determined by the derivative of the fraction. 			Performing the derivative yields
	\begin{align*}
		\frac{\D}{\D\rho} \frac{(1+k\rho)^n}{\prod\limits_{i=1}^n 					((1+k\rho)(1+ \varepsilon\hat{\kappa})-2a(y_i))p} &= 						\frac{kn(1+k\rho)^{n-1} 													\prod\limits_{i=1}^n((1+k\rho)(1+ \varepsilon\hat{\kappa}) - 				2a(y_i))p}{\left(\prod\limits_{i=1}^n 										((1+k\rho)(1+ \varepsilon\hat{\kappa})-2a(y_i))p\right)^2}\\
		&   - \frac{(1+k\rho)^n \frac{\D}			{\D\rho} 						\prod\limits_{i=1}^n((1+k\rho)(1+ 											\varepsilon\hat{\kappa})-2a(y_i))p}											{\left(\prod\limits_{i=1}^n ((1+k\rho)(1+ 									\varepsilon\hat{\kappa})-2a(y_i))p\right)^2}.
	\end{align*}
	Again we see that it is sufficient to look only at a small part of 			this derivative to determine the sign, namely the numerator. The 			claim is
	\[kn\prod\limits_{i=1}^n((1+k\rho)(1+ \varepsilon\hat{\kappa}) - 			2a(y_i))p - (1+k\rho) \frac{\D}												{\D\rho}\prod\limits_{i=1}^n((1+k\rho)										(1+\varepsilon\hat{\kappa})-2a(y_i))p 			< 0 \]
	and can be shown by induction over $n\in \N$. \\
	Let $n=1$, then 
	\[k((1+k\rho)(1+ \varepsilon\hat{\kappa}) -2a(y_1))p - (1+k\rho)(1+ 		\varepsilon\hat{\kappa})kp = -2kpa(y_1) < 									0,\]
	by Assumption \ref{ass:mut:2}.\\
	Let the statement be true for $n\in\N$. Then have a look at the 			derivative for $n+1$
	\begin{eqnarray*}
		& & k(n+1) \prod\limits_{i=1}^{n+1}((1+k\rho)(1+ 							\varepsilon\hat{\kappa}) - 2a(y_i))p\\
		& &  - 										(1+k\rho)\frac{\D}{\D \rho} \prod											\limits_{i=1}^{n+1} ((1+k\rho)(1+ \varepsilon\hat{\kappa}) - 				2a(y_i))p \\
		&=& kn \prod\limits_{i=1}^n((1+k\rho)(1+ \varepsilon\hat{\kappa}) - 2a(y_i))p \cdot ((1+k\rho)(1+ \varepsilon\hat{\kappa}) - 		2a(y_{n	+1}))p\\
		& &  + k\prod\limits_{i=1}^{n+1} ((1+k\rho)(1+ \varepsilon\hat{\kappa}) - 2a(y_i))p  		\\
		& & -(1+k\rho) \Biggl[ \frac{\D}{\D\rho} \prod\limits_{i=1}^n 				((1+k\rho)(1+ \varepsilon\hat{\kappa})- 2a(y_i))p \cdot 					((1+k\rho)(1+ \varepsilon\hat{\kappa}) - 2a(y_{n+1}))p \\
		& & + kp\prod\limits_{i=1}^n ((1+k\rho)(1+ 									\varepsilon\hat{\kappa}) - 2a(y_i))p\Biggr] \\
		&=& ((1+k\rho)(1+ \varepsilon\hat{\kappa}) - 2a(y_{n+1}))p \Biggl[ kn\prod\limits_{i=1}^n 				((1+k\rho)(1+ \varepsilon\hat{\kappa}) - 2a(y_i))p \\
		& & - (1+k\rho)\frac{\D}{\D\rho} \prod\limits_{i=1}^n ((1+k\rho)			(1+ \varepsilon\hat{\kappa}) - 		2a(y_i))p\Biggr]\\ 
	 	& & + k\Biggl[ \prod\limits_{i=1}^{n+1} ((1+k\rho)(1+ 						\varepsilon\hat{\kappa}) -2a(y_i))p \\
	 	& & - (1+k\rho)p \prod\limits_{i=1}^n ((1+k\rho)(1+ \varepsilon\hat{\kappa}) - 							2a(y_i))p\Biggr] < 0,
	\end{eqnarray*}
	because the first term is negative due to the induction assumption 			and the second term is negative because $1+k\rho > (1+k\rho)(1+ 			\varepsilon\hat{\kappa}) - 2a(y_{n+1})$ for $\varepsilon$ small 			enough.
\section*{Acknowledgements} S.C. has been partially supported by the grant
MTM2017-84214-C2-2-P from MICINN. Research of A.M.-C. and J.-E.B. has been part of SFB 873 supported by German Research Foundation (DFG).
\bibliography{Bibliography}

\end{document}